\documentclass[11pt,a4paper,reqno]{elsarticle}
\usepackage{amsfonts}
\usepackage{amsmath,amssymb,amsthm,amsxtra}
\usepackage{float}
\usepackage[colorlinks, linkcolor=RoyalBlue,anchorcolor=Periwinkle,
    citecolor=Orange,urlcolor=Emerald]{hyperref}
\usepackage[usenames,dvipsnames]{xcolor}
\usepackage{enumitem}
\setlength{\unitlength}{2.7pt}
\usepackage{geometry} \geometry{left=3.2cm,right=3.2cm}
\usepackage{graphicx}
\usepackage{subfigure}
\usepackage{bookmark}
\usepackage{tikz}\usetikzlibrary{matrix}
\usepackage{url}

\input xy \xyoption{all}

\def\graphc{JungleGreen}
\def\vertexc{blue}
\def\halfc{blue}
\def\thirdc{blue}

\theoremstyle{plain}
\newtheorem{theorem}{Theorem}[section]
\newtheorem{lemma}[theorem]{Lemma}
\newtheorem{corollary}[theorem]{Corollary}
\newtheorem{proposition}[theorem]{Proposition}
\newtheorem{conjecture}[theorem]{Conjecture}
\theoremstyle{definition}
\newtheorem{definition}[theorem]{Definition}
\newtheorem{example}[theorem]{Example}
\newtheorem{remark}[theorem]{Remark}

\newtheorem{ass}[theorem]{Assumption}
\numberwithin{equation}{section}

\def\hua{\mathcal}
\def\kong{\mathbb}
\def\<{\langle}
\def\>{\rangle}

\def\Aut{\operatorname{Aut}}
\def\Ind{\operatorname{Ind}}
\def\Sim{\operatorname{Sim}}
\def\Hom{\operatorname{Hom}}

\def\Ext{\operatorname{Ext}}
\def\Irr{\operatorname{Irr}}
\def\Stab{\operatorname{Stab}}
\def\diff{\operatorname{d}}
\def\Br{\operatorname{Br}}
\def\Rep{\operatorname{Rep}}

\def\arg{\operatorname{arg}}
\def\rank{\operatorname{rank}}
\def\Ps{\operatorname{Ps}}

\def\AR{\operatorname{AR}}

\def\dimvec{\operatorname{\underline{dim}}}

\newcommand{\h}{\operatorname{\hua{H}}}            
\newcommand{\nh}{\operatorname{\widehat{\hua{H}}}}
\renewcommand{\k}{\mathbf{k}}
\renewcommand{\mod}{\operatorname{mod}}
\newcommand{\Ho}[1]{\operatorname{\bf H}_{#1}}
\newcommand{\Wid}[1]{\operatorname{Wid}_{#1}}
\newcommand{\tilt}[3]{{#1}^{#2}_{#3}}
\newcommand{\Cone}{\operatorname{Cone}}

\def\numbers
{\begin{enumerate}[label=\arabic*{$^\circ$}.]}
\def\ends
{\end{enumerate}}

\newcommand{\EG}{\operatorname{EG}}       
\newcommand{\EGp}{\operatorname{EG}^\circ}
\newcommand{\C}[2]
{\operatorname{\hua{C}_{#1}(#2)}}               

\newcommand{\shift}[1]
{\operatorname{\Sigma_{#1}}}
\newcommand{\D}{\operatorname{\hua{D}}}

\newcommand{\qq}[1]{\operatorname{\Gamma}_{#1}Q}

\newcommand{\cub}{\operatorname{U}}

\def\zero{\hua{H}_\Gamma}
\def\nzero{\hua{H}_Q}

\def\dimvec{\operatorname{\underline{dim}}}

\newcommand\eb{edge[->,thick,>=latex,NavyBlue]}
\newcommand\es{edge[->,>=latex,dashed,thick]}
\newcommand\ee{edge[->,>=latex,thick,red]}
\newcommand\eo{edge[->,>=latex,dotted,thick]}

\newcommand\pf{\operatorname{pf}}
\newcommand\HN{\operatorname{HN}}
\newcommand\dpath{\operatorname{\overrightarrow{\mathbf{P}}}}
\newcommand\oset[1]{[#1]_{\HN}}
\newcommand\sli{\operatorname{Sli}}
\newcommand\dia{\operatorname{diam}}
\newcommand\dis{\operatorname{dis}}
\newcommand\hall{\operatorname{\mathbf{H}}}
\newcommand\DT{\operatorname{DT}}

\newtheorem{counterexample}[theorem]{Counterexample}
\newtheorem{DefLem}[theorem]{Definition/Lemma}

\newcommand{\cc}[1]{\overline{#1}}
\def\Stap{\operatorname{Stab}^\circ}
\def\HA{\hua{E}}

\def\K{\operatorname{K}}


\title{Stability conditions and quantum dilogarithm identities for Dynkin quivers}
\author{{\small Yu Qiu }}
\date{2014-9-15}
\begin{document}

\begin{abstract}
{\small

We study the fundamental groups of the exchange graphs for
the bounded derived category $\D(Q)$ of a Dynkin quiver $Q$ and
the finite-dimensional derived category $\D(\qq{N})$ of
the Calabi-Yau-$N$ Ginzburg algebra associated to $Q$.
In the case of $\D(Q)$,
we prove that its space of stability conditions (in the sense of Bridgeland) is simply connected.
As an application,
we show that the Donanldson-Thomas type invariant associated to $Q$ can be calculated
as a quantum dilogarithm function on its exchange graph.
In the case of $\D(\qq{N})$,
we show that the faithfulness of the Seidel-Thomas braid group action
(which is known for $Q$ of type $A$ or $N=2$)
implies the simply connectedness of the space of stability conditions.

\vskip .3cm
{\parindent =0pt
\it Key words:} space of stability conditions, Calabi-Yau categories,
higher cluster categories, Donaldson-Thomas invariants, quantum dilogarithm identities
}\end{abstract}

\maketitle


\section{Introduction}
\subsection{Overall}
The notion of a stability condition on a triangulated category
was defined by Bridgeland \cite{B1} (cf. \S~\ref{sec:SC}).
The idea was inspired from physics by studying D-branes in string theory.
Nevertheless, the notion itself is interesting purely mathematically.
A stability condition on a triangulated category $\D$ consists of
a collection of full additive subcategories of $\D$, known as the slicing,
and a group homomorphism from the Grothendieck group $\K(\D)$
to the complex plane, known as the central charge.
Bridgeland \cite{B1} showed a key result that
the space $\Stab(\D)$ of stability conditions on $\D$
is a finite dimensional complex manifold, provided that the rank of $\K(\D)$ is finite.
Moreover, these spaces carry interesting geometric/topological structures
which shed light on the properties of the original triangulated categories.
Most interesting examples of triangulated categories are derived categories.
They are weak homological invariants arising in both
algebraic geometry and representation theory,
and indeed different manifolds and quivers with relations
might share the same derived category (say complex projective plane and Kronecker quiver).
Also note that the space of stability conditions
are related to Kontsevich's homological mirror symmetry.
More precisely,
a quotient of the space of stability conditions of
the Fukaya category (of Lagrangian submanifolds) of a symplectic manifold
should be (conjectural) the K\"{a}hler moduli space of the mirror variety.
We will study the spaces of stability conditions of
the bounded derived category $\D(Q)$ of a Dynkin quiver $Q$ and
the finite-dimensional derived category $\D(\qq{N})$ of
the Calabi-Yau-$N$ Ginzburg algebra associated to $Q$.
Noticing that when $Q$ is of type A,
$\D(\qq{N})$ was studied by Khovanov-Seidel-Thomas \cite{KS}/\cite{ST}
via the derived Fukaya category of
Lagrangian submanifolds of the Milnor fibres of the singularities of type $A_n$.

In understanding stability conditions and triangulated categories,
t-structures play an important role.
We will always assume a t-structure is \emph{bounded}.
In fact, we can view a t-structure as a `discrete' (integer) structure
while a stability condition (resp. a slicing) is its `complex' (resp. `real') refinement.
Every t-structure carries an abelian category sitting inside it, known as its heart.
Note that an abelian category is a canonical heart in its derived category,
e.g. $\nzero=\mod\k Q$ is the canonical heart of $\D(Q)$.
The classical way to understand relations between different hearts
is via HRS-tilting (cf. \S~\ref{sec:TT}), in the sense of Happel-Reiten-Smal\o.
To give a stability condition is equivalent to giving
a t-structure and a stability function on its heart with the Harder-Narashimhan (HN) property.
This implies that a finite heart (i.e. has $n$ simples and has finite length)
corresponds to a (complex) $n$-cell in the space of stability conditions.
Moreover, Woolf \cite{W} shows that
the tilting between finite hearts corresponds to the tiling of these $n$-cells.
More precisely,
two $n$-cells meet if and only if the corresponding hearts differ by a HRS-tilting;
and they meet in codimension one if and only if the hearts differ by a simple tilting.
Following Woolf,
our main method to study a space of stability conditions of a triangulated category $\D$
is via its `skeleton' -- the exchange graph $\EG(\D)$,
that is, the oriented graphs whose vertices are hearts in $\D$
and whose edges correspond to simple (forward) tiltings between them (cf. \cite{Q}).
Figure~\ref{fig:LOGO} (taken from \cite{Q}, which in fact,
the quotient graph of $\EGp(\D(\Gamma_3 A_2))/[1]$ and $\Stap(\D(\Gamma_3 A_2))/\kong{C}$,
see \cite{S1} for more details)
demonstrates the duality between the exchange graph and
the tiling of the space of stability conditions by many cells like the shaded area,
so that each vertex in the exchange graph corresponds to a cell and
each edge corresponds to a common edge (codimension one face) of two neighboring cells.
We will prove certain simply connectedness of spaces of stability conditions.

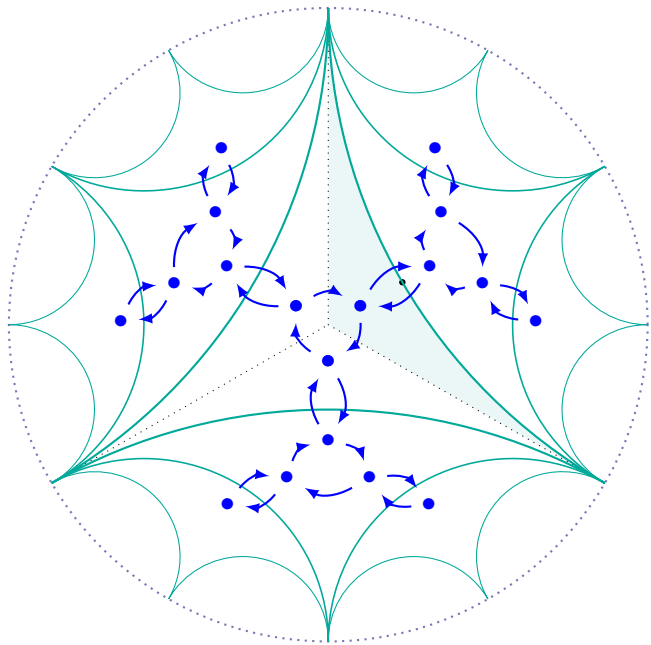
\begin{figure}[t]\centering
\begin{tikzpicture}[scale=.7]
\path (0,0) coordinate (O);
\path (0,6) coordinate (S1);
\draw[fill=\graphc!7,dotted]
    (S1) arc(360/3-360/3+180:360-360/3:10.3923cm) -- (O) -- cycle;
\draw[,dotted] (210:6cm) -- (O);
\draw[fill=black] (30:1.6077cm) circle (.05cm);
\draw
    (O)[Periwinkle,dotted,thick]   circle (6cm);
\draw
    (S1)[\graphc,thick]
    \foreach \j in {1,...,3}{arc(360/3-\j*360/3+180:360-\j*360/3:10.3923cm)} -- cycle;
\draw
    (S1)[\graphc,semithick]
    \foreach \j in {1,...,6}{arc(360/6-\j*360/6+180:360-\j*360/6:3.4641cm)} -- cycle;
\draw
    (S1)[\graphc]
    \foreach \j in {1,...,12}{arc(360/12-\j*360/12+180:360-\j*360/12:1.6077cm)}
        -- cycle;
\foreach \j in {1,...,3}
{\path (-90+120*\j:.7cm) node[\vertexc] (v\j) {$\bullet$};
 \path (-210+120*\j:.7cm) node[\vertexc] (w\j) {$\bullet$};
 \path (-90+120*\j:2.2cm) node[\vertexc] (a\j) {$\bullet$};
 \path (-90+15+120*\j:3cm) node[\vertexc] (b\j) {$\bullet$};
 \path (-90-15+120*\j:3cm) node[\vertexc] (c\j) {$\bullet$};}
\foreach \j in {1,...,3}
{\path[->,>=latex] (v\j) edge[\thirdc,bend left,thick] (w\j);
 \path[->,>=latex] (a\j) edge[\thirdc,bend left,thick] (b\j);
 \path[->,>=latex] (b\j) edge[\thirdc,bend left,thick] (c\j);
 \path[->,>=latex] (c\j) edge[\thirdc,bend left,thick] (a\j);}
\foreach \j in {1,...,3}
{\path (60*\j*2-1-60:3.9cm) node[\vertexc] (x1\j) {$\bullet$};
 \path (60*\j*2+1-120:3.9cm) node[\vertexc] (x2\j) {$\bullet$};
}
\foreach \j in {1,...,3}
{\path[->,>=latex] (v\j) edge[\halfc,bend left,thick] (a\j);
 \path[->,>=latex] (a\j) edge[\halfc,bend left,thick] (v\j);
 \path[->,>=latex] (b\j) edge[\halfc,bend left,thick] (x1\j);
 \path[->,>=latex] (c\j) edge[\halfc,bend left,thick] (x2\j);
 \path[->,>=latex] (x1\j) edge[\halfc,bend left,thick] (b\j);
 \path[->,>=latex] (x2\j) edge[\halfc,bend left,thick] (c\j);}
\end{tikzpicture}
\caption{Exchange graphs as the skeleton of space of stability conditions}\label{fig:LOGO}
\end{figure}

Stability conditions naturally link to Donaldson-Thomas (DT) invariants,
which were originally defined as the weighted Euler characteristics (by Behrend function)
of moduli spaces for Calabi-Yau 3-folds (cf. \cite{N1}).
Reineke \cite{R} (cf. \S~\ref{sec:DT.R})
realized that the DT-type invariant for a Dynkin quiver
can be calculated as a product of quantum dilogarithms, indexing by any HN-stratum of $\nzero$,
which is a `maximal refined version' of torsion pairs on an abelian category.
His approach was integrating certain identities in Hall algebras
to show the stratum-independence of the product.
We will give a combinatorial proof of these type of quantum dilogarithm identities
via exchange graphs.

\subsection{Contents}
We will collect related background in \S~\ref{sec:Preliminaries}.

In \S~\ref{sec:embed},
we prove a general result (Theorem~\ref{thm:c.e.s}) that,
under Assumption~\ref{ass},
the exchange graph $\EG_0$ can be embedded into the corresponding (connected component of)
the space $\Stab_0$ of stability conditions with a surjection
$\pi_1(\EG_0)\twoheadrightarrow\pi_1(\Stab_0)$.

In \S~\ref{sec:sc.Q},
we first make a key observation (Proposition~\ref{pp:45}) that
the fundamental groups of the exchange graphs of $\EG(Q)$
is generated by squares and pentagons.
Moreover, we prove (Theorem~\ref{thm:sc1}) that
the simply connectedness of the space of stability conditions on $\D(Q)$.

In \S~\ref{sec:sc.CY.Q}, we will (Corollary~\ref{cor:CYstab})
identify a principal component $\Stap(\qq{N})$ of $\Stab(\D(\qq{N}))$,
for any $N\geq2$, and show that (Corollary~\ref{cor:conn})
the faithfulness of the Seidel-Thomas braid group action
(which is known for $Q$ of type $A$ or $N=2$)
implies the simply connectedness of $\Stap(\qq{N})$.
Further, the quotient space $\Stap(\qq{N})/\Br$
is the `right' space of stability conditions for the higher cluster category $\C{N-1}{Q}$
(cf. Remark~\ref{rem:stab}).
In fact, the generators of its fundamental group
provide a topological realization of almost completed cluster tilting objects in $\C{N-1}{Q}$
(Theorem~\ref{thm:sc2}).

In \S~\ref{sec:sc limit},
we present (Theorem~\ref{thm:limit}) a limit formula of spaces of stability conditions
\[\Stab(Q) \cong \lim_{N\to\infty} \Stap(\qq{N})/\Br(\qq{N}),\]
which reflects a philosophical point of view that, in a suitable sense,
\begin{gather}\label{eq:philo}
    Q = \lim_{N\to\infty}\qq{N}.
\end{gather}

In \S~\ref{sec:DPHN}, we study directed paths in exchange graphs,
which naturally corresponds to Keller's green mutation (see, e.g. \cite{Q3}).
We will first show (Theorem~\ref{thm:HN}) that
HN-strata of $\nzero$ can be naturally interpreted as
directed paths connecting $\nzero$ and $\nzero[1]$ in $\EG(Q)$.
Then we discuss total stability of stability functions (cf. Conjecture~\ref{conj:R})
and the path-inducing problem.
We will provide explicit examples and a conjecture.

In \S~\ref{sec:QD via EG},
we observe that the existence of DT-type invariant of $Q$
is equivalent to the path-independence of the quantum dilogarithm product
over certain directed paths.
Then we give a slight generalization (Theorem~\ref{thm:DT}) of this path-independence,
to all paths (not necessarily directed) whose vertices lie between $\nzero$ and $\nzero[1]$.
The point is that this path-independence reduces to the cases
of squares and pentagons in Proposition~\ref{pp:45}.
Therefore such type of quantum dilogarithm identities
is just certain composition of the classical Pentagon Identities.
We will also discuss the wall-crossing formula for APR-tilting (cf. \cite{N}).
Note that Keller \cite{K6} also spotted this phenomenon
and proved some more remarkable quantum dilogarithm identities
via mutation of quivers with potential (cf. \cite{K10}).
In fact, his formula can also be rephrased as quantum dilogarithm product over paths
in the exchange graph of the corresponding Calabi-Yau-$3$ categories.

\subsection*{Acknowledgements}
This work is part of my Ph.D thesis,
under the supervision of Alastair King and supported by a U.K. Overseas Research Studentship.
I would like to thank him for the patient guidance throughout my Ph.D period.
I would also like to thank Bernhard Keller for enlightening conversations on Isle of Skye.
Final thanks to Jon Woolf for sharing his expertise on topology,
Chris Brav for clarifying the faithfulness of braid group actions
and an anonymous referee for pointing out numerous typos.

\section{Preliminaries}\label{sec:Preliminaries}
\subsection{Dynkin Quivers}\label{sec:quiver.D}
A \emph{(simply laced) Dynkin quiver} $Q=(Q_0,Q_1)$ is
a quiver whose underlying unoriented graph is one of the following unoriented graphs:
\begin{gather}\label{eq:labeling}
\begin{array}{llr}
    \xymatrix{A_{n}:} \quad &
    \xymatrix{
        1 \ar@{-}[r]& 2 \ar@{-}[r]& \cdots \ar@{-}[r]& n }         \\
    \xymatrix@R=0.1pc{\\D_{n}:} \quad &
    \xymatrix@R=0.1pc{
        &&&& n-1 \\
        1 \ar@{-}[r]& 2 \ar@{-}[r]& \cdots \ar@{-}[r]& n-2 \ar@{-}[dr]\ar@{-}[ur]\\
        &&&& n &}                                                             \\
    \xymatrix{E_{6,7,8}:} \quad &
    \xymatrix@R=1pc{
        && 4 \\
        1 \ar@{-}[r]& 2 \ar@{-}[r]& 3 \ar@{-}[r]\ar@{-}[u]& 5 \ar@{-}[r]& 6
                \ar@{-}[r]& 7 \ar@{-}[r]& 8}
\end{array}
\end{gather}
For a Dynkin quiver $Q$, we denote by $\k Q$ the \emph{path algebra};
denote by $\mod\k Q$ the \emph{category of finite dimensional $\k Q$-modules},
which can be identified with $\Rep_{\k}(Q)$,
the \emph{category of representations} of $Q$ (cf. \cite{ASS1}).
We will not distinguish between $\mod \k Q$ and $\Rep_{\k}(Q)$.
Recall that the \emph{Euler form}
\[
    \<-,-\>:\kong{Z}^{Q_0}\times\kong{Z}^{Q_0}\to\kong{Z}
\]
associated to the quiver $Q$ is defined by
\[
    \<\mathbf{\alpha}, \mathbf{\beta}\>
    =\sum_{i\in Q_0}\alpha_i\beta_i-\sum_{(i\to j)\in Q_1}\alpha_i\beta_j.
\]
Moreover for $M,L\in\mod \k Q$, we have
\begin{gather}\label{eq:euler form}
    \<\dim M,\dim L\>=\dim\Hom(M,L)-\dim\Ext^1(M,L),
\end{gather}
where $\dim E\in\kong{N}^{Q_0}$ is the \emph{dimension vector} of any $E\in\mod \k Q$.

\subsection{Hearts and t-structures}\label{sec:DC}
Let $\hua{D}(Q)=\hua{D}^b(\mod \k Q)$ be the \emph{bounded derived category} of $Q$,
which is a triangulated category.
Recall (e.g. from \cite{B1}) that a \emph{t-structure}
on a triangulated category $\hua{D}$ is
a full subcategory $\hua{P} \subset \hua{D}$
with $\hua{P}[1] \subset \hua{P}$ and such that, if one defines
\[
  \hua{P}^{\perp}=\{ G\in\hua{D}: \Hom_{\hua{D}}(F,G)=0,
  \forall F\in\hua{P}  \},
\]
then, for every object $E\in\hua{D}$, there is
a unique triangle $F \to E \to G\to F[1]$ in $\hua{D}$
with $F\in\hua{P}$ and $G\in\hua{P}^{\perp}$.
Any t-structure is closed under sums and summands
and hence it is determined by the indecomposables in it.
Note also that $\hua{P}^{\perp}[-1]\subset \hua{P}^{\perp}$.

A t-structure $\hua{P}$ is \emph{bounded} if for every object $M$,
the shifts $M[k]$ are in $\hua{P}$ for $k\gg0$ and in $\hua{P}^{\perp}$ for $k\ll0$.
The \emph{heart} of a t-structure $\hua{P}$ is the full subcategory
\[
  \h=  \hua{P}^\perp[1]\cap\hua{P}
\]
and any bounded t-structure is determined by its heart.
More precisely, any bounded t-structure $\hua{P}$
with heart $\h$ determines, for each $M$ in $\hua{D}$,
a canonical filtration (\cite[Lemma~3.2]{B1})
\begin{equation}
\label{eq:canonfilt}
\xymatrix@C=0,5pc{
  0=M_0 \ar[rr] && M_1 \ar[dl] \ar[rr] &&  ... \ar[rr] && M_{m-1}
        \ar[rr] && M_m=M \ar[dl] \\
  & H_1[k_1] \ar@{-->}[ul]  && && && H_m[k_m] \ar@{-->}[ul]
  }
\end{equation}
where $H_i \in \h$ and $k_1 > ... > k_m$ are integers.
Using this filtration,
one can define the $k$-th homology of $M$, with respect to $\h$,
to be
\begin{gather}\label{eq:homology}
 \Ho{k}(M)=
 \begin{cases}
   H_i & \text{if $k=k_i$} \\
   0 & \text{otherwise.}
 \end{cases}
\end{gather}
Then $\hua{P}$ consists of those objects
with no negative homology and $\hua{P}^\perp$
those with only negative homology.
For any (non-zero) object $M$ in $\hua{D}$, define the
\emph{(homological) width} $\Wid{\h}(M)$
to be the difference $k_1-k_m$ of the maximum and minimum degrees
of its non-zero homology as in \eqref{eq:canonfilt}.
It is clear that the width is invariant under shifts.

In this paper, we only consider bounded t-structures and their hearts
and all categories will be implicitly assumed to be $\k$-linear.
For two hearts $\h_1$ and $\h_2$ in $\hua{D}$
with corresponding t-structure $\hua{P}_1$ and $\hua{P}_2$,
we say $\h_1 \leq \h_2$ if and only if $\hua{P}_1\supset\hua{P}_2$,
or equivalently, $\hua{P}^\perp_1\subset\hua{P}^\perp_2$,
with equality if and only if the two hearts are the same.

Note that a heart is always abelian.
For instance, $\D(Q)$ has a canonical heart $\mod \k Q$,
which we will write as $\nzero$ from now on.
Recall an object in an abelian category is \emph{simple}
if it has no proper subobjects, or equivalently
it is not the middle term of any (non-trivial) short exact sequence.
We will denote a complete set of simples of an abelian category $\hua{C}$ by
$\Sim\hua{C}$.
Denote by $\<T_1, ..., T_m\>$ the smallest full
subcategory containing $T_1,...,T_m$ and closed under extensions.

\subsection{Sections in AR-quiver}
For quivers, a convenient way to picture the categories
$\nzero$ and $\hua{D}(Q)$
is by drawing their Auslander-Reiten (AR) quivers.

\begin{definition}\cite[Chapters II,IV]{ASS1}
The \emph{AR-quiver} $\AR (\hua{C})$
of a ($\k$-linear) category $\hua{C}$ is defined as follows.
\begin{itemize}
\item
    Its vertices are identified with elements of $\Ind\hua{C}$,
    a complete set of indecomposables of $\hua{C}$,
    i.e. a choice of one indecomposable object in each isomorphism class.
\item
    The arrows from $X$ to $Y$ are identified with
    a basis of $\Irr(X,Y)$,
    the space of irreducible maps $X\to Y$ (see \cite[IV~1.4~Definition]{ASS1}).
\item
    There is a (maybe partially defined) bijection, called AR-translation,
\[
   \tau: \Ind\hua{C} \to \Ind\hua{C},
\]
    with the property that there is an arrow from $X$ to $Y$
    if and only if there is a corresponding arrow from $\tau Y$ to $X$.
    Moreover, we have the AR-formula
    \[
    \Ext^1(Y,X)\cong \Hom(X,\tau Y)^*.
    \]
\end{itemize}
\end{definition}

For example, here is (part of) the AR-quiver of $\hua{D}(Q)$ for $Q$ of type $A_4$.
The black vertices are the AR-quiver of
$\nzero$, when $Q$ has a straight orientation.
\[Q:\,\xymatrix{\circ \ar[r]&\circ \ar[r]&\circ \ar[r]&\circ}\]
\[
 \xymatrix@R=1pc@C=1pc{
 && \circ \ar[dr] && \circ \ar[dr] && \bullet \ar[dr]
    && \circ \ar[dr] && \circ \ar[dr] && \\
&   \circ \ar[ur]\ar[dr] && \circ \ar[ur]\ar[dr] &&
    \bullet \ar[ur]\ar[dr] && \bullet \ar[ur]\ar[dr] &&
    \circ \ar[ur]\ar[dr] && \circ \\
\dots && \circ \ar[dr]\ar[ur] &&
    \bullet \ar[dr]\ar[ur] && \bullet \ar[dr]\ar[ur]
    && \bullet \ar[dr]\ar[ur] && \circ \ar[dr]\ar[ur] && \dots\\
&   \circ \ar[ur] && \bullet \ar[ur] && \bullet \ar[ur] &&
    \bullet \ar[ur] && \bullet \ar[ur] &&\\
}
\]

Recall that, the \emph{translation quiver} $\kong{Z}Q$ of an acyclic quiver $Q$
is the quiver whose vertex set is $\kong{Z}\times Q_0$
and whose arrow set is $\{a_m, b_m \mid a\in Q_1, m\in\kong{Z}\}$,
where $a_m$ is the arrow $(m,i)\to(m,j)$ and $b_m$ is the arrow $(m,i)\leftarrow(m-1,j)$
for any arrow $a:i\to j$ in $Q_1$.
Further, a section $\Sigma$ in $\kong{Z}Q$ is a full subquiver satisfying the following
\begin{itemize}
\item   $\Sigma$ is acyclic.
\item   $\Sigma$ meets the vertex set $\kong{Z}\times \{i\}$ exactly once,
for any $i\in Q_0$.
\item   If the head and tail of a path $p$ in $\kong{Z}Q$ are in $\Sigma$,
then any vertex in $p$ is in $\Sigma$.
\end{itemize}
When $Q$ is of Dynkin type, $\AR(\D(Q))$ is isomorphic to $\kong{Z}Q$.
In particular, we have \cite{H}
\begin{gather}\label{eq:ar}
\Ind\D(Q)=\bigcup_{j\in\kong{Z}}\Ind\nzero[j].
\end{gather}

We will give several characterization of standard hearts in $\D(Q)$
in this subsection.
Following \cite[Chapter IX]{ASS1}, we introduce several notions.
\begin{definition}
    A \emph{path} $p$ in $\AR(\hua{C})$ is a sequence
    $$\xymatrix@C=0.5cm{
      M_0 \ar[r]^{f_1} & M_1 \ar[r]^{f_2} & M_2 \ar[r] &
      ... \ar[r] & M_{t-1} \ar[r]^{f_t} & M_t }$$
    of irreducible maps $f_i$
    between indecomposable modules $M_i$ with $t \geq 1$.
    When such a path exists, we say that $M_0$ is a \emph{predecessor} of $M_t$
    or $M_t$ is a \emph{successor} of $M_0$.
    A path $p$ is called \emph{sectional} if,
    for all $1 < i \leq t$, $\tau M_i \ncong M_{i-2}$.
    Denote by $\Ps(M)$ the set of objects that lie in some sectional path starting from $M$
    and by $\Ps^{-1}(M)$ be the set of objects that lie in some sectional path ending at $M$.
\end{definition}

We have the following elementary lemma.
\begin{lemma}\cite{ASS1}
\label{lem:basic}
Let $\Delta$ be the underlying graph of a Dynkin quiver $Q$.
Any section in $\kong{Z}Q$ is isomorphic to some orientation of $\Delta$.
In particular,
the projectives of $\nzero$
together with the irreducible maps between them form a section in $\AR(\D(Q))$,
with exactly the opposite orientation of $Q$.
Further, for any object $M$ in $\kong{Z}Q$,
$\Ps(M)$ and $\Ps^{-1}(M)$ form two sections.
\end{lemma}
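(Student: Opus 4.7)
The plan is to verify the three assertions in turn, leveraging the explicit combinatorics of the translation quiver $\kong{Z}Q$. For the first assertion, let $\Sigma$ be a section. The column-exhaustion axiom yields $|\Sigma_0|=|Q_0|$. For adjacent $i,j\in Q_0$ with an arrow $a:i\to j$ of $Q_1$, the only direct arrows of $\kong{Z}Q$ between the columns $\kong{Z}\times\{i\}$ and $\kong{Z}\times\{j\}$ are $a_m:(m,i)\to(m,j)$ and $b_m:(m-1,j)\to(m,i)$. If $\Sigma$ meets these columns at $(p,i)$ and $(q,j)$ respectively, the convexity axiom---applied to one of the short paths $(p,i)\to(p,j)\to(p+1,i)\to\ldots$ that would otherwise enter column $i$ or column $j$ a second time---forces $p-q\in\{0,1\}$ and yields a unique arrow of $\Sigma$ between them. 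Hence the underlying graph of $\Sigma$ is $\Delta$, and $\Sigma$ carries some orientation of it.

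For the second assertion, recall that $\Hom(P_j,P_i)$ has a $\k$-basis given by paths from $i$ to $j$ in $Q$, so that irreducible maps between indecomposable projectives correspond bijectively to arrows of $Q$ with directions reversed. This identifies the full subquiver of $\AR(\D(Q))$ on $\{P_i\}_{i\in Q_0}$ with $Q^{\op}$. Under the standard isomorphism $\AR(\D(Q))\cong\kong{Z}Q$ sending $P_i$ to $(0,i)$, each column is met exactly once by construction; the projectivity of each $P_i$ (equivalently, the absence of a $\tau$-predecessor) together with the acyclicity of $Q$ gives the convexity axiom. The result is a section whose orientation is opposite to that of $Q$, as claimed.

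For the third assertion, I would proceed by induction on the number of arrows in a sectional path starting (resp.\ ending) at $M$, which is finite in the Dynkin case by \eqref{eq:ar}. Beginning with $M\in\Ps(M)$, the immediate irreducible successors $M\to N_1,\ldots,N_r$ extend $\Ps(M)$, and the defining condition $\tau N_k\ncong M$ propagates consistently along each branch. That $\Ps(M)$ meets every column \emph{at most} once holds because two distinct sectional descendants sharing a column would force a $\tau$-coincidence violating the sectional condition on some connecting path; that it meets every column \emph{at least} once follows from the connectedness of $\Delta$ combined with the exhaustive nature of sectional propagation. Convexity results from the observation that any path between two sectional descendants can be deformed via the mesh relations of $\kong{Z}Q$ into one that remains inside $\Ps(M)$. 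The argument for $\Ps^{-1}(M)$ is entirely dual.

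The main obstacle is the third part: namely, showing that sectional propagation from $M$ hits each column exactly once and produces a convex subquiver. The supporting combinatorial lemmas---absence of $\tau$-coincidences along sectional paths and their mesh-compatibility---are classical Auslander-Reiten material treated in detail in \cite[Chapters VII--IX]{ASS1}, so the bulk of the proof amounts to invoking these standard facts rather than developing new machinery.
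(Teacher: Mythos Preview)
The paper does not supply its own proof of this lemma: it is stated with the citation \cite{ASS1} and no \texttt{proof} environment follows. In other words, the paper treats all three assertions as standard facts from the Auslander--Reiten theory developed in that reference, so there is nothing to compare your argument against line by line.

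That said, your sketch is sound and tracks the standard treatment. Your argument for the first assertion is correct once the convexity step is spelled out: if $(p,i)$ and $(q,j)$ lie in $\Sigma$ with $p>q+1$ (or $p<q$), the zig-zag path in $\kong{Z}Q$ between them revisits column $i$ (or $j$), contradicting the one-per-column axiom; this pins down $p\in\{q,q+1\}$ and hence the unique arrow. Your second assertion is handled cleanly via the path-algebra description of $\Hom(P_j,P_i)$. For the third assertion you are right that the crux is showing $\Ps(M)$ meets each column exactly once and is convex; your own closing paragraph concedes that these ``supporting combinatorial lemmas'' are drawn from \cite{ASS1}, which is exactly where the paper points the reader as well. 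So in effect both you and the paper defer the substantive content to the same source; your write-up simply unpacks more of the mechanism before doing so.
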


For a section $P$ in $\AR(\D(Q))\cong\kong{Z}Q$,
define $[P,\infty)= \bigcup_{m \geq 0} \tau^{-m} P$.
Similarly for $(-\infty,P]$ and define
$[P_1,P_2]=[P_1,\infty)\cap(-\infty,P_2]$.
The following lemmas characterize such type of intervals.

\begin{lemma}
\label{lem:ps}
The interval $[\Ps(M),\infty)$ consists precisely of all the successors of $M$.
Similarly,
$(-\infty,\Ps^{-1}(M)]$ consists precisely all the predecessors of $M$.
\end{lemma}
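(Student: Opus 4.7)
The plan is to prove the first equality; the second is dual, via the symmetric argument applied to $\AR(\D(Q))^{\op}$.

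The inclusion $[\Ps(M),\infty) \subseteq$ successors of $M$ will be handled by explicit construction. Given $X = \tau^{-m}Y$ with $Y \in \Ps(M)$ and $m \geq 0$, fix a sectional path $M = Y_0 \to Y_1 \to \cdots \to Y_s = Y$ witnessing $Y \in \Ps(M)$, and for each $j = 1,\dots,m$ choose an indecomposable summand of the middle term of the AR-triangle $\tau^{-(j-1)}Y \to E_j \to \tau^{-j}Y$ to obtain a two-step path $\tau^{-(j-1)}Y \to \bullet \to \tau^{-j}Y$. Concatenating yields a path $M \to \cdots \to Y \to \tau^{-1}Y \to \cdots \to \tau^{-m}Y = X$ in $\AR(\D(Q))$, exhibiting $X$ as a successor of $M$.

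For the reverse inclusion, the plan is to induct on path length, showing that the set $\bigcup_{m \geq 0} \tau^{-m}\Ps(M) = [\Ps(M),\infty)$ is closed under taking one further arrow. The base case $t=1$ is immediate: a single arrow $M \to N$ is a sectional path of length one, so $N \in \Ps(M)$. For the inductive step, suppose $N \in \tau^{-m}\Ps(M)$, write $N = \tau^{-m}Y$ with $Y \in \Ps(M)$ witnessed by a sectional path $M = Y_0 \to \cdots \to Y_s = Y$, and let $N \to N'$ be an arrow in $\kong{Z}Q \cong \AR(\D(Q))$; by $\tau$-equivariance this corresponds to an arrow $Y \to \tau^m N'$. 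Either the extended path $M \to \cdots \to Y \to \tau^m N'$ remains sectional, giving $\tau^m N' \in \Ps(M)$ and hence $N' \in \tau^{-m}\Ps(M)$; or the sectional condition fails at the final step, which forces $\tau(\tau^m N') \cong Y_{s-1}$, so $N' \cong \tau^{-(m+1)}Y_{s-1} \in \tau^{-(m+1)}\Ps(M)$. Either way $N' \in [\Ps(M),\infty)$, completing the induction.

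The main delicacy lies in the dichotomy of the inductive step: one must verify that the only way the sectional condition can fail at the appended arrow is the explicit identity $\tau(\tau^m N') \cong Y_{s-1}$, and that $s \geq 1$ whenever this case arises (the boundary case $Y=M$, $s=0$, collapses to the base case of the induction and is handled directly). Once these bookkeeping points are settled, the argument depends only on the translation structure of $\kong{Z}Q$ guaranteed by Lemma~\ref{lem:basic} and the definition of sectional path, with no further algebraic input.
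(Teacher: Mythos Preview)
Your proof is correct and rests on the same observation as the paper's: if appending one arrow to a sectional path $M=Y_0\to\cdots\to Y_s$ breaks the sectional condition, then the new endpoint is $\tau^{-1}Y_{s-1}$. The paper organises this as a path-shortening procedure---given an arbitrary path $M=M_0\to\cdots\to M_j=L$, locate an index $i$ with $\tau M_i\cong M_{i-2}$ and replace the tail by its $\tau$-translate, obtaining a path of length $j-2$ to $\tau L$; iterate until sectional---whereas you run the same dichotomy forwards as an induction on path length, showing $[\Ps(M),\infty)$ is closed under single arrows. The two arguments are dual reorganisations of one another; yours makes the invariant (membership in $[\Ps(M),\infty)$) explicit at each step, while the paper's makes the termination (length drops by $2$) explicit. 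One small remark: your boundary case $s=0$ is not really the base case of the induction but rather falls under Case~1, since a length-one path is vacuously sectional; you note this but the phrasing could be sharpened.
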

\begin{proof}
We only prove the first assertion. The second is similar.

By the local property of the translation quiver $\kong{Z}Q$,
any object in $[\Ps(M),\infty)$ is a successor of $M$.
On the other hand, let $L$ be any successor of $M$ with path
\[
  M=M_0 \xrightarrow{f_1} M_1 \to \dots \xrightarrow{f_j} M_j=L.
\]
If $\tau M_i =M_{i-2}$ for some $2 \leq i \leq j$,
then consider $\tau L$ with path
\[
    M=M_0 \xrightarrow{f_1} \dots \xrightarrow{f_{i-2}} M_{i-2}=\tau M_i
    \xrightarrow{\tau f_i}  \tau M_{i+1} \xrightarrow{\tau f_{i+2}} \dots
    \xrightarrow{\tau{f_{j}}} \tau M_j= \tau L.
\]
we can repeat the process until the path is sectional,
i.e. until we obtain $\tau^k L \in \Ps(M)$ for some $k\geq 0$.
Thus $L\in [\Ps(M),\infty)$.
\end{proof}

\begin{lemma}
\label{lem:homs}
Let $M,L \in \Ind\hua{D}(Q)$.
If $\Hom(M,L) \neq 0$ then
\begin{gather*}
    L \in \Big[\Ps(M),\Ps^{-1} \big( \tau(M[1]) \big) \Big],\quad
    M \in \Big[ \Ps \big( \tau^{-1}(L[-1]) \big), \Ps^{-1}(L) \Big].
\end{gather*}
\end{lemma}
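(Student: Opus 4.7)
The plan is to reduce both inclusions to Lemma~\ref{lem:ps} via two standard facts from Auslander--Reiten theory for Dynkin quivers: first, a nonzero morphism between indecomposables of $\D(Q)$ is realised by a path of irreducible morphisms in $\AR(\D(Q))$; second, Serre duality in $\D(Q)$ takes the form $\Hom(M,L)\cong D\Hom(L,\tau(M[1]))$, since for $Q$ Dynkin the Serre functor is $\tau\circ[1]$.

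First I would establish the lower endpoint of the first interval. Because $Q$ is Dynkin, $\D(Q)$ is Hom-finite, Krull--Schmidt, and has AR-triangles; by Happel's theorem $\AR(\D(Q))\cong\kong{Z}Q$ is locally finite. Standard AR-theory (nilpotency of the radical between fixed indecomposables in representation-finite type) then yields that any nonzero map between indecomposables factors as a composition of irreducible morphisms, i.e.\ corresponds to a path in $\AR(\D(Q))$. Applying this to $\Hom(M,L)\neq 0$ shows that $L$ is a successor of $M$, hence $L\in[\Ps(M),\infty)$ by Lemma~\ref{lem:ps}.

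For the upper endpoint of the first interval, Serre duality turns $\Hom(M,L)\neq 0$ into $\Hom(L,\tau(M[1]))\neq 0$. Applying the previous paragraph to the pair $(L,\tau(M[1]))$ gives a path $L\to\cdots\to\tau(M[1])$, so $L$ is a predecessor of $\tau(M[1])$ and therefore $L\in(-\infty,\Ps^{-1}(\tau(M[1]))]$, again by Lemma~\ref{lem:ps}. Intersecting the two containments yields the first assertion. The second assertion follows by the symmetric argument: from $\Hom(M,L)\neq 0$ we already have $M\in(-\infty,\Ps^{-1}(L)]$ (via the path $M\to\cdots\to L$), while rewriting Serre duality as $\Hom(\tau^{-1}(L[-1]),M)\neq 0$ and applying the path-factorisation fact supplies $M\in[\Ps(\tau^{-1}(L[-1])),\infty)$.

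The only substantive ingredient is the path-factorisation statement in the first paragraph; the rest is a formal combination of it with Serre duality and Lemma~\ref{lem:ps}. In the write-up I would therefore spend the most care citing or justifying this factorisation precisely (via the Harada--Sai lemma / nilpotency of the radical in representation-finite Krull--Schmidt categories), after which the four endpoints of the two intervals fall out symmetrically.
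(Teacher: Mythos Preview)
Your argument is correct and follows essentially the same route as the paper: the paper's proof invokes the AR-formula $\Hom(M,L)^*\cong\Hom(\tau^{-1}(L),M[1])$ and then appeals to Lemma~\ref{lem:ps}, which is exactly your Serre-duality-plus-successor/predecessor reduction. The one thing you add is an explicit justification of the path-factorisation step (that $\Hom(M,L)\neq 0$ forces $L$ to be a successor of $M$ in $\AR(\D(Q))$), which the paper leaves implicit; making this precise via Harada--Sai/radical-nilpotency is a good idea for a self-contained write-up.
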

\begin{proof}
By the Auslander-Reiten formula,
we have
\[
    \Hom( M,L )^* \,=\, \Hom( \tau^{-1}(L),M[1]  ).
\]
The lemma now follows from Lemma~\ref{lem:ps}.
\end{proof}

For later use, we define the position function as follows.
\begin{DefLem}\label{def:pf}
There is a \emph{position function}
\[\pf:\AR(\D(Q))\to\kong{Z},\]
unique up to an additive constant, such that
$\pf(M)-\pf(\tau M)=2$ for any $M\in\AR(\D(Q))$
and $\pf(M)<\pf(L)$ for any successor $L$ of $M$.
For a heart $\h$ in $\D(Q)$, define \[\pf(\h)=\sum_{S\in\Sim\h}\pf(S).\]
\end{DefLem}

\subsection{Standard hearts in $\D(Q)$}\label{sec:SH}
\begin{proposition}
\label{pp:standard}
Let $Q$ be a Dynkin quiver.
A section $P$ in $\AR(\hua{D}(Q))$
will induce a unique t-structure $\hua{P}$ on $\hua{D}(Q)$
such that $\Ind \hua{P} = [P,\infty)$.
For any t-structure $\hua{P}$ on $\D(Q)$, the following are equivalent:
\numbers
\item
    $\hua{P}$ is induced by some section $P$.
\item
    $\Ind\hua{D}(Q) = \Ind\hua{P} \cup \Ind\hua{P}^\perp$.
\item
    The corresponding heart $\h$ is isomorphic to $\h_{Q'}$,
    where $Q'$ has the same underlying diagram of $Q$.
\item
    $\Wid{\h}M = 0$ for any $M \in \Ind\hua{D}(Q)$,
    where $\h$ is the corresponding heart.
\ends
\end{proposition}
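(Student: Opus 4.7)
The plan is to first verify the preliminary assertion that a section $P$ induces a unique t-structure with $\Ind\hua{P}=[P,\infty)$, and then to establish the equivalences via the cycle $(3)\Rightarrow(1)\Rightarrow(2)\Rightarrow(4)\Rightarrow(3)$. For the preliminary assertion, Lemma~\ref{lem:basic} identifies $P$ with some orientation $Q'$ of the underlying Dynkin diagram, yielding an equivalence $\D(Q)\cong\D(Q')$ that transports the standard t-structure; its indecomposables form $\bigcup_{k\geq 0}\Ind\h_{Q'}[k]$, which, by iterated application of $\tau^{-1}$ starting from the projectives of $\h_{Q'}$ (which are identified with $P$ itself), fills out exactly $[P,\infty)$. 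Uniqueness is automatic since any t-structure is determined by its indecomposables.

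The implication $(3)\Rightarrow(1)$ reverses this identification: if $\h\cong\h_{Q'}$, the indecomposable projectives of $\h_{Q'}$ form a section in $\AR(\D(Q))$ by Lemma~\ref{lem:basic}, and that section induces $\h$'s t-structure. For $(1)\Rightarrow(2)$, I would show that every $G\in\Ind\D(Q)\setminus[P,\infty)$ lies in $\hua{P}^\perp$: a nonzero $\Hom(F,G)$ for some $F\in[P,\infty)$ would, by Lemma~\ref{lem:homs}, place $G$ in $[\Ps(F),\Ps^{-1}(\tau(F[1]))]$ and so among the successors of $F$; but by Lemma~\ref{lem:ps} the successors of any element of $[P,\infty)$ remain in $[P,\infty)$, contradicting $G\notin[P,\infty)$, and closure under extensions finishes the step. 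For $(2)\Rightarrow(4)$, I would argue by contradiction: if some $M\in\Ind\D(Q)$ has width $k_1-k_m>0$ with respect to $\h$, then the indecomposable shift $M[-k_1]$ has nonzero homology both at degree $0$ (from $H_1$) and at the strictly negative degree $k_m-k_1$ (from $H_m$), so it belongs to neither $\hua{P}$ nor $\hua{P}^\perp$, violating $(2)$.

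The main obstacle will be $(4)\Rightarrow(3)$. Under $(4)$ every indecomposable is concentrated in a single degree with respect to $\h$, so by \eqref{eq:ar} one obtains the disjoint decomposition $\Ind\D(Q)=\bigsqcup_{k\in\kong{Z}}\Ind\h[k]$, which identifies $\Ind\h$ as a finite fundamental domain in $\kong{Z}Q$ of cardinality $|\Ind\nzero|$ and forces $\h$ to have exactly $n=|Q_0|$ simples (since $\K(\h)=\K(\D(Q))=\kong{Z}^n$). The plan is then to identify the indecomposable projectives of $\h$ as a section of $\AR(\D(Q))$: using that $\h$ is an extension-closed abelian subcategory whose indecomposables form a fundamental domain, one shows via the AR-combinatorics of Lemmas~\ref{lem:ps} and~\ref{lem:homs} that $\Ind\h$ sits between two ``parallel'' sections, with its $\AR$-minimal members constituting a section $P$. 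By Lemma~\ref{lem:basic} this $P$ is an orientation $Q'$ of the Dynkin diagram, and a direct check that the simples of $\h$ match those of $\h_{Q'}$ (via the Euler form, which is rigid in the Dynkin case) gives $\h\cong\h_{Q'}$, closing the cycle. The delicate technical ingredient is precisely this fundamental-domain argument that pins down the shape of $\Ind\h$ inside $\kong{Z}Q$.
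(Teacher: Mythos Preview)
Your cycle $(3)\Rightarrow(1)\Rightarrow(2)\Rightarrow(4)\Rightarrow(3)$ is valid, but it places the entire burden on $(4)\Rightarrow(3)$, which you correctly flag as delicate and leave only sketched. The paper instead runs the implications as $(1)\Rightarrow(2)$, $(1)\Leftrightarrow(3)$, $(3)\Rightarrow(4)\Rightarrow(2)$, and then closes with $(2)\Rightarrow(1)$ as the single nontrivial step. That step has a short direct argument: under $(2)$, any successor of an element of $\hua{P}$ must again lie in $\hua{P}$ (a nonzero map into $\hua{P}^\perp$ is forbidden), and dually any predecessor of an element of $\hua{P}^\perp$ lies in $\hua{P}^\perp$; hence in each $\tau$-orbit of $\kong{Z}Q$ there is a unique transition index $m_v$ with $\tau^j(v)\in\hua{P}$ for $j\geq m_v$ and $\tau^j(v)\in\hua{P}^\perp$ otherwise. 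A one-step local check on adjacent vertices then shows that the vertices $\{v_{m_v}\}_{v\in Q_0}$ assemble into a section inducing $\hua{P}$.

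Your fundamental-domain plan for $(4)\Rightarrow(3)$ would ultimately need this same ``interval'' property of $\Ind\hua{P}$ in each $\tau$-orbit: being a fundamental domain for the shift action does \emph{not} by itself force $\Ind\h$ to sit between two parallel sections (a fundamental domain for a translation need not be contiguous). The missing ingredient is precisely the Hom-orthogonality encoded in $(2)$, so you would in effect be reproving $(2)\Rightarrow(1)$ inside your $(4)\Rightarrow(3)$, but with extra overhead --- identifying projectives of $\h$ (which presupposes $\h$ has enough of them), and then matching simples via Euler forms. The paper's decomposition isolates the boundary argument cleanly and avoids that detour.
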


\begin{proof}
For a section $P$,
let $\hua{P}$ be the subcategory
which is generated by the elements in $\Ind \hua{P} = [P,\infty)$.
Notice that $\Ind\hua{P}^\perp = (\infty, \tau^{-1}P]$,
which implies $\hua{P}$ is a t-structure.
Thus $1^\circ\Rightarrow2^\circ$.
Since $\h=[P, P[1])$, $1^\circ\Rightarrow3^\circ$.

If $\h$ is isomorphic to $\h_{Q'}$ for some quiver $Q'$,
then $\Ind\hua{P}=\cup_{j \geq 0} \h[j]= [ P',\infty ) $,
where $P'$ is the sub-quiver in $\AR(\hua{D}(Q))$ consists of the projectives.
Thus $3^\circ$$\Rightarrow$$1^\circ$.

Since for any $M \in \Ind\hua{D}(Q)$,
$\Wid{\h}M =0$ if and only if $M \in \h[k]$ for some integer $k$,
we have $3^\circ\Rightarrow4^\circ$.
Noticing that $\h[k]$ is either in $\hua{P}$ or $\hua{P}^\perp$,
we have $4^\circ\Rightarrow2^\circ$.

Now we only need to prove $2^\circ\Rightarrow1^\circ$.
If an indecomposable $M$ is in $\hua{P}$ (resp. $\hua{P}^\perp$),
then, inductively, all of its successors (resp. predecessors) are in
$\hua{P}$ (resp. $\hua{P}^\perp$).
By the local property, $\tau^m(M)$ is a successor of
$M$ if $m>0$ and a predecessor if $m<0$.
Hence, in any row that contains $v$ in $\kong{Z}Q\cong\AR(\hua{D}(Q))$,
for any vertex $v\in Q_0$,
there is a unique integer $m_v$ such that
$\tau^j (v) \in \hua{P}$, for $j\geq m_v$, while
$\tau^j (v) \in \hua{P}^\perp$, for $j<m_v.$
Furthermore, for a neighboring vertex $w$ of $v$,
the local picture looks like this
\[
\xymatrix@R=1pc@C=1pc{
&  \square \ar[dr] && \square \ar[dr] && \bigcirc \ar[dr] &&
    \bigcirc    \ar[dr]   &&&&  v \ar@{-}[d]\\
\square  \ar[ur]   &&   \square \ar[ur] && ? \ar[ur] &&
    \bigcirc \ar[ur]   &&   \bigcirc &&& w
}
\]
where $\bigcirc \in \hua{P}$ and $\square \in \hua{P}^\perp$.
Hence $v_{m_v}$ and $w_{m_w}$ must be connected in $\kong{Z}Q$ and
so the full sub-quiver of $\kong{Z}Q$
consisting of all vertices $\{v_{m_v}\}_{v\in Q_0}$
is a section and furthermore it induces $\hua{P}$.
\end{proof}

We call a heart on $\hua{D}(Q)$ is \emph{standard} if
the corresponding t-structure is induced by a section.

\subsection{Exchange graphs}\label{sec:TT}

A similar notion to a t-structure on a triangulated category
is a torsion pair on an abelian category.
Tilting with respect to a torsion pair in the heart of a t-structure
provides a way to pass between different t-structures.

A \emph{torsion pair} in an abelian category $\hua{C}$ is a pair of
full subcategories $\<\hua{F},\hua{T}\>$ of $\hua{C}$,
such that $\Hom(\hua{T},\hua{F})=0$ and furthermore
every object $E \in \hua{C}$ fits into a short exact sequence
$ \xymatrix@C=0.5cm{0 \ar[r] & T \ar[r] & E \ar[r] & F \ar[r] & 0}$
for some objects $T \in \hua{T}$ and $F \in \hua{F}$.
Here $\hua{T}$ is the torsion part of the torsion pair and $\hua{F}$ is the torsion-free part.
We will use the notation $\h=\<\hua{F},\hua{T}\>$ to indicate
an abelian category with a torsion pair.

By \cite{HRS},
for any heart $\h$ (in a triangulated category)
with torsion pair $\<\hua{F},\hua{T}\>$,
there exists the following two hearts with torsion pairs
\[
    \h^\sharp=\<\hua{T},\hua{F}[1]\>,\quad \h^\flat=\<\hua{T}[-1],\hua{F}\>.
\]
We call $\h^\sharp$ the \emph{forward tilt} of $\h$
with respect to the torsion pair $\<\hua{F},\hua{T}\>$,
and $\h^\flat$ the \emph{backward tilt} of $\h$.
Clearly $\hua{T}=\h\cap\h^\sharp$,$\hua{F}=\h\cap\h^\sharp[-1]$ and $\h^\flat=\h^\sharp[-1]$.

The following lemma collect several well-known facts about tilting.
\begin{lemma}[\cite{HRS}, cf. also \cite{Q}]\label{lem:well known}
Let $\h$ be a heart in a triangulated category $\D$.
There are canonical bijections between
\begin{itemize}
\item the set of torsion pairs in $\h$,
\item the set of hearts between $\h[-1]$ and $\h$,
\item the set of hearts between $\h$ and $\h[1]$,
\end{itemize}
which sends a torsion pair to its backward and forward tilts respectively.
\end{lemma}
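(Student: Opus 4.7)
The plan is to establish the forward-tilt bijection between torsion pairs in $\h$ and hearts $\h'$ satisfying $\h\leq\h'\leq\h[1]$; the backward-tilt case is strictly symmetric (and can in fact be reduced to the forward case by applying it with $\h[-1]$ in place of $\h$). One direction of the map is furnished directly by the construction of Happel--Reiten--Smal\o: starting from a torsion pair $\<\hua{F},\hua{T}\>$ in $\h$, the subcategory $\h^\sharp=\<\hua{T},\hua{F}[1]\>$ is a heart, and the corresponding t-structure $\hua{P}^\sharp$ is verified by inspection to satisfy $\hua{P}[1]\subset\hua{P}^\sharp\subset\hua{P}$, i.e.\ $\h\leq\h^\sharp\leq\h[1]$.

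For the inverse direction, given a heart $\h'$ with $\hua{P}[1]\subset\hua{P}'\subset\hua{P}$, I would define the candidate torsion pair by
\[
    \hua{T}=\h\cap\h',\qquad \hua{F}=\h\cap\h'[-1],
\]
and verify $\<\hua{F},\hua{T}\>$ is a torsion pair in $\h$. The vanishing $\Hom_\h(\hua{T},\hua{F})=0$ is immediate, since any such morphism in $\D$ goes from an object of $\hua{P}'$ to one in $(\hua{P}')^\perp$. To produce the short exact sequence, take an arbitrary $E\in\h$ and apply the $\hua{P}'$-truncation triangle $A\to E\to B\to A[1]$. The sandwich $\hua{P}[1]\subset\hua{P}'\subset\hua{P}$ yields $A\in\hua{P}$ and $B\in(\hua{P}')^\perp\subset\hua{P}^\perp[1]$; combined with $E\in\h$, the long exact sequence of $\h$-cohomology attached to the triangle forces both $A$ and $B$ to be concentrated in degree $0$, i.e.\ to lie in $\h$. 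The inclusion $A\in\h'$ then follows from $\h[-1]\subset\hua{P}^\perp\subset(\hua{P}')^\perp$, placing $A\in\hua{T}$, while $B\in\h'[-1]$ is immediate from $B\in(\hua{P}')^\perp\cap\h$, placing $B\in\hua{F}$.

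Finally I would verify that the two constructions are mutually inverse. Starting from $\<\hua{F},\hua{T}\>$ and passing to $\h^\sharp$, the uniqueness of the decomposition of objects inside $\h^\sharp=\<\hua{T},\hua{F}[1]\>$ identifies $\h\cap\h^\sharp$ with $\hua{T}$ and $\h\cap\h^\sharp[-1]$ with $\hua{F}$, recovering the original torsion pair. Conversely, given $\h'$ with extracted pair $\<\hua{F},\hua{T}\>$, any $E'\in\h'$ lies in $\hua{P}\cap\hua{P}^\perp[2]=\<\h,\h[1]\>$ by the same sandwich argument, so its canonical HRS-filtration with respect to $\h$ is a triangle $F[1]\to E'\to T$ with $F,T\in\h$; the homological argument from the previous paragraph shows $T\in\hua{T}$ and $F\in\hua{F}$, so $\h'\subset\<\hua{T},\hua{F}[1]\>=\h^\sharp$, and the reverse inclusion holds because $\h'$ is closed under extensions and contains both $\hua{T}$ and $\hua{F}[1]$. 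The main obstacle is the homological bookkeeping: one must juggle both halves of the sandwich $\hua{P}[1]\subset\hua{P}'\subset\hua{P}$ together with their perp versions to pin down the $\h$-cohomological degrees of $A$ and $B$, but once the sign conventions are fixed the remaining steps are essentially formal.
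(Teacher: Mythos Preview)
The paper does not actually prove this lemma: it is stated with a citation to \cite{HRS} (and \cite{Q}) and no argument is given, so there is no in-paper proof to compare against. Your outline is the standard Happel--Reiten--Smal\o\ argument and is correct in its overall structure.

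One small technical point deserves tightening. In the inverse direction you assert that, from $A\in\hua{P}$, $B\in\hua{P}^\perp[1]$ and $E\in\h$, ``the long exact sequence of $\h$-cohomology attached to the triangle forces both $A$ and $B$ to be concentrated in degree $0$''. Taken literally this is not quite enough: those three constraints together with the long exact sequence leave $\Ho{1}(A)$ (a quotient of $\Ho{0}(B)$) and $\Ho{-1}(B)$ (a subobject of $\Ho{0}(A)$) potentially nonzero. The clean fix is to argue by extension-closure instead: rotate the truncation triangle to $E\to B\to A[1]$ and use $E\in\hua{P}$, $A[1]\in\hua{P}[1]\subset\hua{P}$ to get $B\in\hua{P}$; likewise rotate to $B[-1]\to A\to E$ and use $B[-1]\in\hua{P}^\perp\subset\hua{P}^\perp[1]$, $E\in\hua{P}^\perp[1]$ to get $A\in\hua{P}^\perp[1]$. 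The same style of extension-closure reasoning is what makes your final paragraph (showing $H_0(E')\in\hua{T}$ and $H_1(E')\in\hua{F}$) go through, so it is worth stating it that way explicitly rather than invoking ``the homological argument from the previous paragraph''. With that adjustment the proposal is complete.
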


We say a forward tilting is \emph{simple},
if the corresponding torsion free part is generated by a single simple object $S$,
and denote the heart by $\tilt{\h}{\sharp}{S}$.
Similarly, a backward tilting is \emph{simple}
if the corresponding torsion part is generated by a single simple object $S$,
and denote the heart by $\tilt{\h}{\flat}{S}$.

\begin{definition}
Define the \emph{exchange graph} $\EG(\D)$ of a triangulated category $\D$
to be the oriented graph
whose vertices are all hearts in $\D$
and whose edges correspond to simple forward tiltings between them.
\end{definition}
We will label an edge of $\EG(\D)$ by the simple object of the corresponding tilting,
i.e. the edge with end points $\h$ and $\tilt{\h}{\sharp}{S}$
will be labeled by $S$.

\begin{definition}\label{def:line}
For $S \in \Sim\h$, inductively define
\[
    \tilt{\h}{m\sharp }{S}
    ={  \Big( \tilt{\h}{ (m-1) \sharp}{S} \Big)  }^{\sharp}_{S[m-1]}
\]
for $m\geq1$ and similarly for $\tilt{\h}{m\flat }{S}, m\geq1$.
We will write $\tilt{\h}{m\sharp }{S}=\tilt{\h}{-m\flat }{S}$ for $m<0$.
A \emph{line} $l=l(\h, S)$ in $\EG(\D)$,
for some triangulated category $\D$, is the full subgraph
consisting of the vertices $\{ \tilt{\h}{m\sharp}{S} \}_{m\in\kong{Z}}$,
for some heart $\h$ and a simple $S\in\Sim\h$.
We say an edge in $\EG(\D)$ has \emph{direction}-$T$
if its label is $T[m]$ for some integer $m$;
we say a line $l$ has \emph{direction}-$T$
if some (and hence every) edge in $l$ has direction-$T$.
\end{definition}

By \cite{KV}, we know that $\EG(\D(Q))$ is connected when $Q$ is of Dynkin type,
which will be written as $\EG(Q)$.
For an alternate proof, see Appendix~\ref{app}.

\subsection{Calabi-Yau categories and braid groups}\label{sec:CY}
Let $N>1$ be an integer.
Denote by $\qq{N}$ the \emph{Calabi-Yau-$N$ Ginzburg (dg) algebra} associated to $Q$,
which is constructed as follows (\cite[\S~7.2]{K10}, \cite{G}):
\begin{itemize}
\item   Let $Q^N$ be the graded quiver whose vertex set is $Q_0$
and whose arrows are: the arrows in $Q$ with degree $0$;
an arrow $a^*:j\to i$ with degree $2-N$ for each arrow $a:i\to j$ in $Q$;
a loop $e^*:i\to i$ with degree $1-N$ for each vertex $e$ in $Q$.
\item   The underlying graded algebra of $\qq{N}$ is the completion of
the graded path algebra $\k Q^N$ in the category of graded vector spaces
with respect to the ideal generated by the arrows of $Q^N$.
\item   The differential of $\qq{N}$ is the unique continuous linear endomorphism homogeneous
of degree $1$ which satisfies the Leibniz rule and
takes the following values on the arrows of $Q^N$:
\[
    \diff a^*=0,\qquad
    \diff \sum_{e\in Q_0} e^*=\sum_{a\in Q_1} \, [a,a^*] .
\]
\end{itemize}
Write $\D(\qq{N})$ for $\hua{D}_{fd}(\mod  \qq{N})$,
the \emph{finite dimensional derived category} of $\qq{N}$
(cf. \cite[\S~7.3]{K10}).

By \cite{K4} (see also \cite{KS},\cite{ST}), we know that
$\hua{D}(\qq{N})$ is a Calabi-Yau-$N$ category
which admits a canonical heart $\zero$ generated
by simple $\qq{N}$-modules $S_e, e\in Q_0$.
Denote by $\EGp(\qq{N})$ the principal component of the exchange graph
$\EG(\D(\qq{N}))$, that is, the component containing $\zero$.

We recall that each simple in $\Sim\zero$ is an $N$-spherical object (cf. \cite{KS});
moreover, every such spherical object $S$ in $\hua{D}(\qq{N})$
induces an auto-equivalence in $\Aut\hua{D}(\qq{N})$, known as
the \emph{twist functor $\phi_S$} of $S$.
Denote by $\Br(\qq{N})$, the \emph{Seidel-Thomas braid group},
that is, the subgroup of $\Aut\hua{D}(\qq{N})$ generated by
the twist functors of the simples in $\Sim\zero$.

\subsection{Inducing hearts}
Recall some notation and results from \cite[\S~7.3]{Q}.
There is a special kind of exact functors from $\D(Q)$ to $\D(\qq{N})$,
known as the \emph{Lagrangian immersions} (L-immersions), see \cite[Definition~7.3]{Q}.
Let $\h$ be a heart in $\D(\qq{N})$ with $\Sim\h=\{ S_1,...,S_n \}$.
If there is a L-immersion
\[\hua{L}\colon\hua{D}(Q) \to \D(\qq{N})\]
and a heart $\nh\in\EG(Q)$ with $\Sim\nh=\{ \widehat{S}_1,...,\widehat{S}_n \}$,
such that $\hua{L}(\widehat{S}_i)=S_i$,
then we say that $\h$ is induced from $\nh$ via $\hua{L}$
and write $\hua{L}_*(\nh)=\h$.
Furthermore, let $\h$ be a heart in some exchange graph $\EGp(\qq{N})$.
Define the interval $\EG_N(\qq{N}, \h)$ to be the full subgraph of $\EGp(\qq{N})$ induced by
\[
    \{\h_0 \mid \h\in\EGp(\qq{N}),
        \h\leq\h_0\leq\h[N-2] \}
\]
and $\EGp_N(\qq{N}, \h)$ its principal component
(that is, the connected component containing $\h$).
Similarly for $\EG_N(Q,\nh)$ and $\EGp_N(Q,\nh)$.

A maximal \emph{line segment} $l$ in $\EGp_N(\qq{N},\zero)$ is a (simple forward) tilting sequence
\begin{gather}\label{eq:line}
    l\colon\hua{H}\xrightarrow{\,S\,}\tilt{\h}{\sharp }{S}\xrightarrow{S[1]}\cdots\xrightarrow{S[N-3]}
    \tilt{\h}{(N-2)\sharp }{S}
\end{gather}
The \emph{cyclic completion} $\overline{\EGp_N}(\Gamma_N Q,\zero)$ of $\EGp_N(\Gamma_N Q,\zero)$
is the oriented graph obtained from $\EGp_N(\Gamma_N Q,\zero)$ by adding an edge
$\h\to\tilt{\h}{(N-2)\sharp }{S}$ for each maximal line segment \eqref{eq:line}.
Similarly, we can define the cyclic completion of $\EGp_N(Q,\h_Q)$.

\begin{theorem}\cite[Theorems~8.1 and 8.6]{Q}\label{thm:Q}
For an acyclic quiver $Q$, we have the following:
\numbers
\item
    there is a canonical L-immersion $\hua{I}:\D(Q)\to\D(\qq{N})$ that induces an isomorphism
    \begin{gather}\label{eq:I1}
        \hua{I}_*\colon\EGp_N(Q,\nzero) \to \EGp_N(\qq{N},\zero).
    \end{gather}
\item
    as graphs, we have
    \begin{gather}\label{eq:eg}
        \overline{\EGp_N}(Q,\nzero)\cong\overline{\EGp_N}(\qq{N}, \zero)\cong\EGp(\qq{N})/\Br;
    \end{gather}
\item
    for any heart $\h$ in $\EGp(\qq{N})$,
    $\Sim\h$ has $n$ elements and $\{\phi_S\}_{S\in\Sim\h}$ is a generating set for $\Br(\qq{N})$;
\item
    for any line $l$ in $\EGp(\qq{N})$,
    its orbit in $\EGp(\qq{N})/\Br$ is a $(N-1)$ cycle.
\ends
\end{theorem}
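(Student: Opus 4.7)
The plan is to establish the four assertions in sequence, exploiting the structural features of the Calabi-Yau-$N$ Ginzburg algebra $\qq{N}$. For part~(1), I would construct $\hua{I}\colon\D(Q)\to\D(\qq{N})$ via the canonical dg-algebra projection $\qq{N}\to\k Q$ that annihilates all arrows of negative degree together with the loops $e^*$. Pullback along this projection gives an exact functor $\mod\k Q\to\mod\qq{N}$ whose derived functor sends each simple $S_i^{Q}$ to the corresponding simple $S_i^{\qq{N}}$; the L-immersion property is then read off directly from the graded quiver $Q^N$, since the arrows $a^*$ of degree $2-N$ supply precisely the Serre duals dictated by the Calabi-Yau-$N$ structure. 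To upgrade $\hua{I}$ to the graph isomorphism in \eqref{eq:I1}, I would show edge by edge that a simple forward tilt of $\nh$ at $\widehat{S}\in\Sim\nh$ corresponds under $\hua{I}_*$ to the simple forward tilt of $\hua{I}_*(\nh)$ at $\hua{I}(\widehat{S})$, and that within the interval $[\zero[1],\zero[N-1]]$ every simple tilt arises this way; the restriction to width $N-1$ is essential, as $N$-sphericality would otherwise create tilts that do not descend to $\D(Q)$.

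For part~(2), I would identify $\EGp_N(\qq{N},\zero)$ as a fundamental domain for the $\Br(\qq{N})$-action on $\EGp(\qq{N})$. Any heart $\h\in\EGp(\qq{N})$ is connected to $\zero$ by a sequence of simple tilts; whenever this sequence would leave the width-$N$ window, one applies a suitable twist $\phi_S^{\pm1}$ to compensate, returning to the fundamental domain. Freeness of the braid action on hearts then shows that distinct hearts in $\EGp_N(\qq{N},\zero)$ lie in distinct orbits, and combining with part~(1) yields the three-way identification.

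Part~(3) I would prove by induction along edges of $\EGp(\qq{N})$: the base case at $\zero$ is the definition of $\Br(\qq{N})$. Across a simple forward tilt $\h\to\tilt{\h}{\sharp}{S}$, the new simples are $S[1]$ together with the cones of the remaining simples along their universal maps to powers of $S$, and a standard Seidel-Thomas computation yields $\phi_{S[1]}=\phi_S$ and $\phi_{\phi_S(T)}=\phi_S\phi_T\phi_S^{-1}$. Hence the new generating set arises from the old one by an inner automorphism plus substitution, so the generating property propagates along every edge.

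Finally, for part~(4), given a line $l=l(\h,S)$, I would show that iterating the forward tilt at $S,S[1],\dots,S[N-2]$ produces a heart equal to $\phi_S^{-1}(\h)$, using $N$-sphericality of $S$ and the formula $\phi_S(S)=S[1-N]$. The line therefore projects to an $(N-1)$-cycle in $\EGp(\qq{N})/\Br$, and minimality of the period follows from faithfulness of the braid action on hearts. The main obstacle I anticipate lies in part~(1): verifying that the simple tilts in $\D(\qq{N})$ within the prescribed interval are exactly the images of simple tilts from $\D(Q)$, which requires a careful analysis of which new simples can appear under each tilt and is where the width-$(N-1)$ restriction has to do genuine work.
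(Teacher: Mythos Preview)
The paper does not prove this theorem at all: it is stated with the citation \cite{Q} and imported wholesale from King--Qiu, \emph{Exchange graphs of acyclic Calabi-Yau categories}. There is therefore no ``paper's own proof'' to compare your proposal against.

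That said, your outline is broadly in line with how the cited reference establishes these facts. A few remarks on points where your sketch is thin or slightly off. In part~(2) you invoke ``freeness of the braid action on hearts'' to conclude that distinct hearts in $\EGp_N(\qq{N},\zero)$ lie in distinct orbits; but freeness of the $\Br(\qq{N})$-action on $\EGp(\qq{N})$ is itself a nontrivial assertion (closely tied to the faithfulness questions discussed later in the present paper), and in the source the fundamental-domain argument is organised differently, via the inducing property of L-immersions rather than an a priori freeness statement. Similarly in part~(4), the minimality of the period $N-1$ does not need faithfulness of the braid action: it follows directly because the $N-1$ hearts $\tilt{\h}{m\sharp}{S}$ for $0\leq m\leq N-2$ are already pairwise distinct inside the fundamental domain $\EGp_N(\qq{N},\zero)$. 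Your identification of the main difficulty in part~(1) is accurate; in the source this is handled by a careful bookkeeping of which simples can occur after a tilt, using exactly the width constraint you mention.
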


Besides, we have
\begin{proposition}\label{pp:lim}
Let $Q$ be a Dynkin quiver.
$\EG_{N}(Q,\nzero)$ is finite for any $N>1$ and we have
\begin{gather}\label{eq:lim eg}
    \EG(Q)=\lim_{N\to\infty} \EG_{2N}(Q,\nzero[1-N]).
\end{gather}
\end{proposition}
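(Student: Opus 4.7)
The proof splits naturally into the two claims, which share a common ingredient. For Dynkin $Q$, $\nzero$ is hereditary and has only finitely many indecomposables, and by~\eqref{eq:ar} every indecomposable of $\D(Q)$ lies in a unique shift $\nzero[k]$; in particular $\bigcup_{k=a}^{b}\Ind\nzero[k]$ is a finite set for any finite interval $[a,b]$. The plan is to translate each interval condition $\nzero[a]\leq\h_0\leq\nzero[b]$ into a bound placing the simples of $\h_0$ inside $\bigcup_{k=a}^{b}\Ind\nzero[k]$.

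For the finiteness, I would verify this translation as follows: the lower bound gives $\hua{P}_{\h_0}\subset\hua{P}_{\nzero[a]}$, whence $\h_0$ and in particular each simple of $\h_0$ lies in the extension closure of $\bigcup_{j\geq a}\nzero[j]$. The upper bound gives $\hua{P}_{\h_0}^\perp\subset\hua{P}_{\nzero[b]}^\perp$, and here the heredity of $\nzero$ is crucial in identifying $\hua{P}_{\nzero[b]}^\perp$ with the extension closure of $\bigcup_{j\leq b-1}\nzero[j]$, so that $\Sim\h_0\subset\hua{P}_{\h_0}^\perp[1]$ lies in the extension closure of $\bigcup_{j\leq b}\nzero[j]$. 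Since simples are indecomposable, they sit in $\bigcup_{k=a}^{b}\Ind\nzero[k]$. Specializing $(a,b)=(1,N-1)$ and using that a heart is determined by its $|Q_0|$ simples, $\EG_N(Q,\nzero)$ has finitely many vertices.

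For the limit formula, I would run the argument in reverse. An arbitrary heart $\h_0\in\EG(Q)$ has $|Q_0|$ simples, each an indecomposable in a specific $\nzero[k]$, so they all fit inside a finite window $\bigcup_{k=a_0}^{b_0}\Ind\nzero[k]$ depending on $\h_0$. For any $N>\max(b_0,-a_0)$, the analogues of the two translations above show $\nzero[-N+1]\leq\h_0\leq\nzero[N-1]$: the first is immediate from $\hua{P}_{\h_0}$ being generated in shifts of degree $\geq a_0\geq -N+1$; the second is another heredity argument showing that $\hua{P}_{\h_0}^\perp$ lies in the extension closure of $\bigcup_{j\leq b_0-1}\nzero[j]$, and for $N-1>b_0-1$ the $\Hom$-vanishing between widely separated shifts forces every object in the extension closure of $\bigcup_{j\geq N-1}\nzero[j]$ into $\hua{P}_{\h_0}$. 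Applying the same bound to both endpoints of a simple tilt covers edges as well, so the direct limit of $\EG_{2N}(Q,\nzero[-N])$ exhausts $\EG(Q)$ and recovers the desired identification.

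The one delicate step, and the only place where the Dynkin (as opposed to merely finite-length) hypothesis is essential, is the identification of $\hua{P}_{\nzero[k]}^\perp$ with the extension closure of $\bigcup_{j\leq k-1}\nzero[j]$; without heredity this orthogonal could spread across arbitrarily many shifts of $\nzero$, and the reduction of $\Sim\h_0$ to a finite subset of $\Ind\D(Q)$ would collapse.
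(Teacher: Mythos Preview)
Your argument is correct and follows the same strategy as the paper's proof: bound the simples of any heart in the interval inside the finite set $\bigcup_{j}\Ind\nzero[j]$ over the relevant range, and conversely place the simples of an arbitrary heart inside such a window for $N\gg0$. The paper's version is considerably terser---it simply asserts that finitely many indecomposables in $\bigcup_{j=1}^{N-1}\nzero[j]$ forces finitely many hearts, and that each simple of $\h$ lies in some $\nzero[k]$ by inspecting its $\nzero$-homology---leaving the t-structure translations you spell out as implicit.

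Two small remarks. First, your appeal to ``a heart is determined by its $|Q_0|$ simples'' presupposes that every heart in the interval is of finite length; this is true (e.g.\ by connectedness of $\EG(Q)$ together with \cite[Theorem~5.7]{Q}), but you could sidestep it entirely by observing that \emph{all} indecomposables of $\h_0$, not just the simples, land in the finite window via the same argument and \eqref{eq:ar}. Second, your closing comment slightly mislocates the role of the Dynkin hypothesis: heredity (and hence the identification of $\hua{P}_{\nzero[k]}^\perp$ with the extension closure of lower shifts) holds for the path algebra of \emph{any} acyclic quiver; what is genuinely Dynkin-specific is the finiteness of $\Ind\nzero$, which you already invoked in your first paragraph.
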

\begin{proof}
Notice that there are only finitely many indecomposables in
$\bigcup_{j=0}^{N-2}\nzero[j]$ and
hence only finitely many hearts in $\EGp_N(Q,\nzero)$.

Let $\nh\in\EG(Q)$.
For any simple $\widehat{S}$ of $\nh$,
by considering its homology $\Ho{\bullet}$ with respect to $\nzero$,
we have $\widehat{S}\in\bigcup_{j=1-N}^{N-1}\nzero[j]$ provided $N\gg1$.
This implies that $\nzero[-N+1]\leq\nh\leq\nzero[N-1]$.
Then $\nh\in\EG_{2N}(Q,\nzero[1-N])$, which implies \eqref{eq:lim eg}.
\end{proof}

\subsection{Stability conditions}\label{sec:SC}
This section (following \cite{B1})
collects the basic definitions of stability conditions.
Denote $\hua{D}$ a triangulated category and
$\K(\hua{D})$ its Grothendieck group.

\begin{definition}\cite[Definition~5.1]{B1}\label{def:stab}
A \emph{stability condition} $\sigma = (Z,\hua{P})$ on $\hua{D}$ consists of
a group homomorphism $Z:\K(\hua{D}) \to \kong{C}$ called the \emph{central charge} and
full additive subcategories $\hua{P}(\varphi) \subset \hua{D}$
for each $\varphi \in \kong{R}$, satisfying the following axioms:
\numbers
\item if $0 \neq E \in \hua{P}(\varphi)$
then $Z(E) = m(E) \exp(\varphi  \pi \mathbf{i} )$ for some $m(E) \in \kong{R}_{>0}$,
\item for all
$\varphi \in \kong{R}$, $\hua{P}(\varphi+1)=\hua{P}(\varphi)[1]$,
\item if $\varphi_1>\varphi_2$ and $A_i \in \hua{P}(\varphi_i)$
then $\Hom_{\hua{D}}(A_1,A_2)=0$,
\item for each nonzero object $E \in \hua{D}$ there is a finite sequence of real numbers
$$\varphi_1 > \varphi_2 > ... > \varphi_m$$
and a collection of triangles
$$\xymatrix@C=0.8pc@R=1.4pc{
  0=E_0 \ar[rr] && E_1 \ar[dl] \ar[rr] &&   E_2 \ar[dl] \ar[rr] && ... \
  \ar[rr] && E_{m-1} \ar[rr] && E_m=E \ar[dl] \\
  & A_1 \ar@{-->}[ul]  && A_2 \ar@{-->}[ul] &&  && && A_m \ar@{-->}[ul]
},$$
with $A_j \in \hua{P}(\varphi_j)$ for all j.
\ends
\end{definition}

We call the collection of subcategories $\{\hua{P}(\varphi)\}$,
satisfying $2^\circ \sim 4^\circ$ in Definition~\ref{def:stab},
the \emph{slicing} and
the collection of triangles in $4^\circ$ the \emph{Harder-Narashimhan (HN) filtration}.
For any nonzero object $E \in \hua{D}$ with HN-filtration above,
define its upper phase to be $\Psi^+_{\hua{P}}(E)=\varphi_1$
and lower phase to be $\Psi^-_{\hua{P}}(E)=\varphi_m$.
By \cite[Lemma~5.2]{B1} , $\hua{P}(\varphi)$ is abelian.
A (non-zero) object $E\in\hua{P}(\varphi)$ for some $\varphi\in\kong{R}$
is said to be semistable;
in which case, $\varphi=\Psi^{\pm}_{\hua{P}}(E)$.
Moreover, if $E$ is simple in $\hua{P}(\varphi)$,
then it is said to be stable.
Let $I$ be an interval in $\kong{R}$ and define
\[
    \hua{P}(I)=\{ E\in\hua{D} \mid  \Psi^{\pm}_{\hua{P}}(E) \in I \}\cup\{0\}.
\]
Then for any $\varphi\in \kong{R}$,
$\hua{P}[\varphi,\infty)$ and $\hua{P}(\varphi,\infty)$
are t-structures in $\hua{D}$.
Further, we say the heart of a stability condition $\sigma = (Z,\hua{P})$ on $\hua{D}$
is $\hua{P}[0,1)$.
For any heart $\h\in\D$,
let $\cub(\h)$ be the set of stability conditions in $\D$ whose heart is $\h$.

There is a natural $\kong{C}$ action
on the set $\Stab(\D)$ of all stability conditions on $\D$, namely:
\[
    \Theta \cdot (Z,\hua{P})=(Z \cdot z,\hua{P}_x),
\]
where $z=\exp(\Theta \pi \mathbf{i}), \Theta=x+y \mathbf{i}$
and $\hua{P}_x(m)=\hua{P}(x+m)$ for $x,y,m \in \kong{R}$.
There is also a natural action on $\Stab(\D)$ induced by $\Aut(\D)$, namely:
$$\xi \circ (Z,\hua{P})=(Z \circ \xi,\xi \circ \hua{P}).$$

Similarly to stability condition on triangulated categories,
we have the notion of stability function on abelian categories.

\begin{definition}\cite{B1}\label{def:sf}
A \emph{stability function} on an abelian category $\hua{C}$
is a group homomorphism $Z:\K(\hua{C})\to\kong{C}$ such that
for any object $0\neq M\in\hua{C}$, we have
$Z(M) = m(M) \exp(\mu_Z(M) \mathbf{i} \pi)$ for some
$m(M) \in \kong{R}_{>0}$ and $\mu_Z(M)\in [0,1)$,
i.e. $Z(M)$ lies in the upper half-plane
\begin{gather}\label{eq:H}
    H=\{r\exp(\mathrm{i} \pi \theta)\mid r\in\kong{R}_{>0},0\leq \theta<1 \}
    \subset\kong{C}.
\end{gather}
Call $\mu_Z(M)$ the phase of $M$.
We say an object $0\neq M\in\hua{C}$ is semistable (with respect to $Z$) if every
subobject $0\neq L$ of $M$ satisfies $\mu_Z(L) \leq \mu_Z(M)$.
Further, we say a stability function $Z$ on $\hua{C}$ satisfies HN-property,
if for an object $0\neq M\in\hua{C}$,
there is a collection of short exact sequences
\begin{equation*}
\xymatrix@C=1pc@R=1.4pc{
  0=M_0 \ar@{^{(}->}[rr] && M_1 \ar@{->>}[dl] \ar@{^{(}->}[rr] &&  ...
  \ar@{^{(}->}[rr] && M_{k-1} \ar@{^{(}->}[rr] && M_k=M \ar@{->>}[dl] \\
  & L_1 && && && L_k
}\end{equation*}
in $\hua{C}$ such that $L_1,...,L_k$ are semistable objects
(with respect to $Z$) and their phases are in decreasing order,
i.e. $\phi(L_1)>\cdots>\phi(L_k)$.
\end{definition}

Note that we have a different convention $0\leq \theta<1$
for the upper half plane $H$ in \eqref{eq:H} as Bridgeland's $0<\theta\leq1$.

Then we have another way to give a stability condition on triangulated categories.

\begin{proposition}\cite{B1}\label{pp:ss}
To give a stability condition on a triangulated category $\hua{D}$
is equivalent to giving a bounded t-structure on $\hua{D}$ and
a stability function on its heart with the HN-property.
Further,
to give a stability condition on $\hua{D}$ with a finite heart $\h$
is equivalent to giving a function $\Sim\h \to H$,
where $H$ is the upper half plane as in \eqref{eq:H}.
\end{proposition}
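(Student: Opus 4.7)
The plan is to construct explicit inverse bijections between the two sides. For the forward direction, given a stability condition $\sigma=(Z,\hua{P})$, I would propose the heart $\h:=\hua{P}[0,1)$, sitting inside the bounded t-structure $\hua{P}[0,\infty)$. Boundedness is immediate from axiom $4^\circ$ of Definition~\ref{def:stab}, since the finite HN filtration of any $E\in\hua{D}$ gives uniform upper and lower bounds on its phases and hence on the degrees in which $E[k]$ can lie in $\hua{P}^{\ge 0}$ or its orthogonal. To see $Z$ restricts to a stability function on $\h$, take $0\neq M\in\h$: its HN factors $A_j\in\hua{P}(\varphi_j)$ have $\varphi_j\in[0,1)$, so $Z(M)=\sum_j m(A_j)\exp(\mathbf{i}\pi\varphi_j)$ is a finite sum of nonzero vectors in the closed convex cone $\overline{H}\setminus\{0\}$, forcing $Z(M)\in H$. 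The HN property of $Z$ on $\h$ is just axiom $4^\circ$ applied to objects already lying in $\h$.

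For the reverse direction, start with a bounded t-structure with heart $\h$ and a stability function $Z$ on $\h$ satisfying the HN property. Define $\hua{P}(\varphi)$, for $\varphi\in[0,1)$, as the full subcategory of zero and $Z$-semistable objects of $\h$ of phase $\varphi$, and extend to all $\varphi\in\kong{R}$ by the shift rule $\hua{P}(\varphi+1)=\hua{P}(\varphi)[1]$. Axioms $1^\circ$ and $2^\circ$ then hold by construction. Axiom $3^\circ$ reduces, after applying appropriate shifts, to the statement that for semistables $A_1,A_2\in\h$ with $\mu_Z(A_1)>\mu_Z(A_2)$ one has $\Hom(A_1,A_2)=0$; any nonzero such map would factor through its image in $\h$, producing a subobject of $A_2$ of phase $\ge\mu_Z(A_1)$, contradicting semistability of $A_2$.

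Axiom $4^\circ$ is the real content. Given $E\in\hua{D}$, I would first invoke the canonical filtration \eqref{eq:canonfilt} to split $E$ into shifted pieces $\Ho{k_i}(E)[k_i]\in\h[k_i]$ with $k_1>\cdots>k_m$, then inside each piece apply the HN property of $Z$ within $\h$ (shifted by $[k_i]$), and finally concatenate the two layers of triangles by an octahedral induction. For the second assertion, finiteness of $\h$ means $\K(\h)$ is the free abelian group on $\Sim\h$, so a group homomorphism $Z\colon\K(\h)\to\kong{C}$ is the same data as a function $\Sim\h\to\kong{C}$; closure of $H$ under nonzero addition together with Jordan-Hölder in $\h$ shows $Z$ takes values in $H$ iff it does so on $\Sim\h$, and the HN property is automatic in a finite length abelian category because one can regroup a composition series by decreasing phase.

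The main obstacle, as I see it, is the assembly step in axiom $4^\circ$ of the reverse direction: verifying that concatenating the t-structure's canonical filtration with the HN filtrations inside each $\h[k_i]$ yields a single filtration with \emph{strictly} decreasing phases and semistable cones in the correct slicings. Strictness at the interfaces between shifts uses the fact that phases in $\hua{P}[k,k+1)$ lie strictly above those in $\hua{P}[j,j+1)$ when $k>j$, together with the Hom-vanishing between different shifts of $\h$ that underlies the t-structure axioms.
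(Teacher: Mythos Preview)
The paper does not actually prove this proposition: it is stated with the citation \cite{B1} and no proof environment follows. Your argument is essentially the standard one from Bridgeland's original paper (\cite[Proposition~5.3]{B1}), and it is correct in outline; the forward and reverse constructions you describe are exactly the ones Bridgeland gives, and your identification of the octahedral assembly of the two layers of filtrations as the nontrivial step matches his treatment. One small remark: in the forward direction you write that each $Z(A_j)$ lies in $\overline{H}\setminus\{0\}$, but in fact with the paper's convention $\varphi_j\in[0,1)$ each term already lies in $H$ itself, so there is no boundary issue to worry about.
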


Recall a crucial result of spaces of stability conditions.

\begin{theorem}\label{thm:B1}\cite[Theorem~1.2]{B1}
All stability conditions on a triangulated category
$\D$ form a complex manifold, denoted by $\Stab(\D)$;
each connected component of $\Stab(\D)$ is locally homeomorphic to a linear sub-manifold of
$\Hom_{\kong{Z}}(\K(\D),\kong{C})$, sending a stability condition $(\h, Z)$ to its central change $Z$.
\end{theorem}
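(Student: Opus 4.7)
The plan is to follow Bridgeland's original strategy, which first equips $\Stab(\D)$ with a natural generalized metric topology and then promotes it to a complex manifold by means of a deformation result for central charges. The starting point is that each stability condition already carries a great deal of data (the slicing $\hua{P}$, the charge $Z$, and the HN-filtration of every object), so the task is really to identify which piece of this data one may deform freely and how much of it is determined by the rest.

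First I would define a generalized metric on $\Stab(\D)$ by setting, for $\sigma_i=(Z_i,\hua{P}_i)$,
\[
d(\sigma_1,\sigma_2)=\sup_{0\neq E\in\D}\Bigl\{ |\Psi^+_{\hua{P}_1}(E)-\Psi^+_{\hua{P}_2}(E)|,\ |\Psi^-_{\hua{P}_1}(E)-\Psi^-_{\hua{P}_2}(E)|,\ |\log m_1(E)/m_2(E)| \Bigr\},
\]
where $m_i(E)$ is the sum of the moduli of the central charges of the HN-pieces of $E$ under $\sigma_i$. Checking that $d$ satisfies the axioms of a (possibly infinite) metric is routine and turns $\Stab(\D)$ into a topological space in which the map $\hua{Z}\colon(Z,\hua{P})\mapsto Z$ is continuous as a function to $\Hom_{\kong{Z}}(\K(\D),\kong{C})$.

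The heart of the argument is the deformation lemma: assuming $\sigma=(Z,\hua{P})$ is locally finite (an additional technical axiom I would incorporate into the definition of $\Stab(\D)$, following Bridgeland), I would show that any sufficiently small perturbation $W$ of $Z$ in $\Hom_{\kong{Z}}(\K(\D),\kong{C})$ lifts uniquely to a stability condition $(W,\hua{P}')$ close to $\sigma$ in the metric $d$. The construction of $\hua{P}'$ proceeds slice by slice: for each phase $\varphi$ and a small window $\epsilon$, the full subcategory $\hua{P}((\varphi-\epsilon,\varphi+\epsilon))$ is quasi-abelian, and local finiteness guarantees it is of finite length; one then checks that the restriction of $W$ defines a stability function on this slice with the HN-property, and assembles these local pieces into a global slicing. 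This lemma immediately gives that $\hua{Z}$ is a local homeomorphism from a neighbourhood of $\sigma$ onto an open subset of the linear subspace $V(\sigma)\subset\Hom_{\kong{Z}}(\K(\D),\kong{C})$ consisting of charges sharing the support data of $Z$. Since $V(\sigma)$ is locally constant along each connected component, pulling back the complex structure from $V(\sigma)$ through $\hua{Z}$ endows every component of $\Stab(\D)$ with the structure of a finite-dimensional complex manifold, as claimed; the $\kong{C}$-action of Section~\ref{sec:SC} then automatically acts by biholomorphisms.

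The main obstacle is the deformation lemma. The delicate point is not the existence of HN-filtrations on each quasi-abelian slice in isolation, but ruling out the appearance of ``new'' semistable objects outside the expected phase window and ensuring the candidate filtrations for different objects are mutually compatible, so that $\hua{P}'$ really defines a slicing. This is precisely where local finiteness is used in an essential way, and where the technical bookkeeping with the quasi-abelian categories $\hua{P}((a,b))$ becomes unavoidable; everything else in the proof is a comparatively formal consequence, via elementary facts about open maps on metric spaces, of this single analytic input.
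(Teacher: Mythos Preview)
The paper does not prove this theorem at all: it is stated with the citation \cite[Theorem~1.2]{B1} and used as a black box throughout. Your proposal is a faithful outline of Bridgeland's original argument in \cite{B1}, so in that sense it is correct, but there is nothing to compare it against in the present paper.
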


Therefore every finite heart $\h$ corresponds to
a (complex, half closed and half open) $n$-cell
$\cub(\h)\simeq H^n$ inside $\Stab(\D)$ (cf. \cite[Lemma~5.2]{B2}).

\section{The canonical embedding}\label{sec:embed}
In this section, we discuss the relationship
between exchange graphs and stability conditions in general.
Let $\EG_0$ be a connected component of the exchange graph $\EG(\D)$
of some triangulated category $\D$
and $\Stab_0(\D)=\bigcup_{\h\in\EG_0}\cub(\h)$.
We will impose the following finiteness condition
throughout this section.

\begin{ass}\label{ass}
Every heart in $\EG_0$ is finite and has only finitely many torsion pairs.
\end{ass}

We first recall a result of Woolf, which describes
certain connected components of space of stability conditions and
how stability conditions in an $n$-cell $\cub(\h)$ degenerate in such cases.
Note that Assumption~\ref{ass} is slightly weaker then the original \cite[Assumption~2]{W},
but is sufficient for the following theorem (cf. \cite[Remark~2.15]{W}).

\begin{theorem}\cite[Proposition~2.17 and Theorem~2.18]{W}\label{thm:woolf}
Under Assumption~\ref{ass}, $\Stab_0(\D)$
is a connected component in the space of stability conditions of $\D$.
Moreover, for any $\h\in\EG_0$, a stability condition $\sigma=(Z,\hua{P})$
in the boundary of $\cub(\h)$ is determined by allowing the central charge of
a stability condition in the interior of $\cub(\h)$ to degenerate in such a way that
the central charges of a non-empty set of simples become real.
The only constraint on this degeneration is that
there is no $M\in\h$ with $Z(M)=0$ which is semi-stable for each of
a sequence of stability conditions in $\cub(\h)$ with central charges $Z_i\to Z$.
\end{theorem}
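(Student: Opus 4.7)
The plan is to establish two things: that $\Stab_0(\D)$ is both open and closed, hence a connected component of $\Stab(\D)$; and that each $\sigma\in\partial\cub(\h)$ arises from interior points of $\cub(\h)$ by degenerating the charges of some non-empty subset of $\Sim\h$ to the (negative) real axis, subject to the stated non-vanishing constraint on semistable classes. Throughout, I would exploit Theorem~\ref{thm:B1} (each cell $\cub(\h)$ is homeomorphic to $H^n$ via $(Z(S))_{S\in\Sim\h}$) together with Proposition~\ref{pp:ss} (a stability condition on $\D$ is equivalent to a heart equipped with a stability function having the HN property).

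For connectedness of $\Stab_0$, I would argue that adjacent cells meet in codimension one: for a simple forward tilt $\h\leadsto\tilt{\h}{\sharp}{S}$, the wall $\{Z(S)\in\kong{R}_{<0}\}\subset\partial\cub(\h)$ is identified with an analogous wall in $\partial\cub(\tilt{\h}{\sharp}{S})$, and one can path-continuously cross it. Chaining such simple tilts (possible because $\EG_0$ is by definition connected) links any two cells, giving path-connectedness of $\Stab_0$. For openness, the interior of each cell is open by Theorem~\ref{thm:B1}; at a boundary point $\sigma$ of $\cub(\h)$, Assumption~\ref{ass} says $\h$ has only finitely many torsion pairs, so by Lemma~\ref{lem:well known} only finitely many cells (one per torsion pair supported on the degenerating simples) adjoin $\sigma$, and their union covers a full neighborhood. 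For closedness, a convergent sequence $\sigma_i\to\sigma$ in $\Stab_0$ can, after extracting a subsequence and using the same finiteness, be assumed to lie in a single $\overline{\cub(\h)}$, whose closure is then contained in $\Stab_0$ by the openness analysis.

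The ``Moreover'' clause requires the explicit description of $\partial\cub(\h)$. Given $\sigma\in\partial\cub(\h)$, let $J=\{S\in\Sim\h:Z(S)\in\kong{R}_{\leq 0}\}$ be the non-empty set of simples whose charges have become real. I would identify the heart of $\sigma$ as the HRS tilt of $\h$ at the torsion pair generated by a suitable subset of $J$, then verify that the HN property survives the limit. The non-collapse constraint — that no $M\in\h$ with $Z(M)=0$ is semistable for every term of the approaching sequence $\sigma_i$ — is exactly what prevents a genuine semistable class from being sent to zero by $Z$, which would otherwise obstruct the limit from being a bona fide stability condition.

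The main obstacle will be this last step: controlling the limit of slicings. One must show that, under the non-collapse constraint, the HN filtrations of the $\sigma_i$ converge to a well-defined HN filtration for the candidate limit heart, and that this candidate is exactly the HRS tilt predicted by $J$. This requires combining a compactness/finiteness input (from Assumption~\ref{ass}) with the local linear coordinates on $\Stab(\D)$ from Theorem~\ref{thm:B1}, and propagating continuity of the central charge to continuity of the whole slicing data — which is exactly what underlies both the openness argument above and the bijective correspondence between codimension-one boundary strata of $\cub(\h)$ and simple tilts of $\h$.
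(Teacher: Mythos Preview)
The paper does not prove this theorem at all: it is quoted verbatim from Woolf \cite[Proposition~2.17 and Theorem~2.18]{W} and used as a black box, with no argument supplied here. So there is nothing to compare your proposal against in this paper.

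As a standalone sketch your outline is in the right direction, but a few steps are underspecified. The closedness argument is the weakest point: you assert that a convergent sequence in $\Stab_0$ can, after passing to a subsequence, be taken to lie in a single $\overline{\cub(\h)}$, but this already presupposes a local finiteness of the cell decomposition that you have not yet established at that stage (you only invoked Assumption~\ref{ass} for openness). Likewise, identifying the heart of a boundary point $\sigma$ as the HRS tilt at the torsion pair generated by the degenerating simples requires more than just naming $J$: one must show that the limiting slicing actually has $\hua{P}[0,1)$ equal to that tilt, and this is where the non-collapse constraint does real work (ruling out that some $Z(M)=0$ forces a jump to a heart not adjacent to $\h$). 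You flag this as the ``main obstacle'' but do not indicate how to resolve it; in Woolf's argument this is handled by a careful analysis of how the support property and the mass function behave under the degeneration, not merely by finiteness of torsion pairs.
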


Thus, we have
$\cc{\cub(\h)}-\cub(\h)=\bigcup_{\h[-1]\leq\h'<\h} \left( \cc{\cub(\h)}\cap\cub(\h') \right)$
and hence the gluing structure of
\[\Stab_0=\bigcup_{\h\in\EG_0}\cc{\cub(\h)}\]
is encoded by the following formula
\begin{gather}
\label{eq:boundary}
    \partial\cub(\h) =\bigcup_{\h[-1]\leq\h'<\h}
        \left( \cc{\cub(\h)}\cap\cub(\h') \right)
    \cup\bigcup_{\h<\h'\leq\h[1]}
        \left( \cc{\cub(\h')}\cap\cub(\h) \right).
\end{gather}
Call a term in the RHS in \eqref{eq:boundary} a face of the $n$-cell $\cub(\h)$.
We precede to give a more careful discussion about the gluing structure of $\Stab_0$
via its `skeleton' $\EG_0$.

\begin{lemma}\label{lem:top}
Suppose that there is a retraction $R$ from $X\times[0,1)$ to $Y$,
for some open contractible space $X$ and $Y\subset X\times[0,1)$,
such that the restriction of $R$ on $X\times\{0\}$ is a contraction $C$ from $X$ to a point $y$.
For any contraction $C'$ from $X$ to $y'\in X$ and any $0<\epsilon<1$,
there is a retraction $R'$ from $X\times[0,1)$ to $Y'$, such that
$Y'|_{X\times[\epsilon,1)}=Y$ and the retraction $R'$ restricted to $X\times\{0\}$ is $C'$.
\end{lemma}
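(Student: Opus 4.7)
The plan is to build $R'$ piecewise: on the lower strip $X\times(0,\epsilon]$ I would simply set $R'=R$ (so that $Y'|_{X\times(0,\epsilon]}=Y$ is automatic), and on the upper strip $X\times[\epsilon,1]$ I would construct a fresh family of maps interpolating between the time-$\epsilon$ slice of $R$ and the prescribed contraction $C'$ at time $1$. With this division, the only real constraint at the gluing seam $t=\epsilon$ is continuous matching, since the hypothesis $R'|_{X\times(0,\epsilon]}=R$ already fixes the lower half verbatim.

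To produce the interpolation, I would exploit the contractibility of $X$. Equipping the mapping space $\Hom(X,X\times(0,1])$ with the compact-open topology, both domain $X$ and codomain $X\times(0,1]$ are contractible, so this mapping space is itself contractible and in particular path-connected. This yields a continuous path $\gamma\colon[\epsilon,1]\to\Hom(X,X\times(0,1])$ with $\gamma(\epsilon)=R|_{X\times\{\epsilon\}}$ and $\gamma(1)=C'$. Setting $R'(x,t)=\gamma(t)(x)$ on $X\times[\epsilon,1]$ agrees with $R$ at $t=\epsilon$ by construction and so glues continuously, and $Y'$ is then defined as $\bigl(Y\cap(X\times(0,\epsilon])\bigr)\cup\{\gamma(t)(x):x\in X,\,t\in[\epsilon,1]\}$.

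The main obstacle is ensuring that $R'$ is a genuine \emph{retraction} onto $Y'$ rather than just a continuous map whose image is $Y'$. I would address this by choosing the path $\gamma$ more carefully in two stages: first, on a sub-interval $[\epsilon,\tfrac{\epsilon+1}{2}]$, flow $R|_{X\times\{\epsilon\}}$ along $R$ itself to reach the constant map at $y$; second, on $[\tfrac{\epsilon+1}{2},1]$, use a contraction of $X$ to move $y$ to $y'$ and then realise $C'$. Each time-$t$ slice is then either a slice of the original retraction $R$ or a constant map into a single point, both of which are visibly retractions onto their respective images. Finally, I would verify that $R'|_{Y'}=\id_{Y'}$, which reduces on each piece to the corresponding property of $R$ and of the constant maps, and check continuity at the seam $t=\tfrac{\epsilon+1}{2}$ where the two stages are spliced.
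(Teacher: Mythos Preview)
Your proposal has a genuine gap stemming from a misreading of the statement. You treat $R$ as a single retraction map $X\times(0,1]\to Y$ and $C,C'$ as elements of $\Hom(X,X\times(0,1])$ (hence your formula $R'(x,t)=\gamma(t)(x)$ with $\gamma(1)=C'$). But in this lemma $R$ is a \emph{deformation} retraction, i.e.\ a homotopy living in $(X\times(0,1])\times[0,1]$, and $C,C'$ are null-homotopies of the identity on $X$. The paper's proof says this outright: it places $R$ inside $(X\times(0,1])\times[0,1]$, takes a homotopy $F$ between the two contractions $C$ and $C'$, and glues. The hypothesis ``for any contraction $C'$ from $X$ to $y'$'' would be vacuous under your reading, since there is only one constant map to $y'$; the whole point is that the boundary behaviour of $R'$ must match a \emph{prescribed} null-homotopy. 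Your construction therefore does not address the actual conclusion.

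Even granting your single-map reading, the verification that $R'$ is a retraction is flawed. You argue slice by slice that each $\gamma(t)$ is a retraction onto its image, but $R'|_{Y'}=\id_{Y'}$ is a global condition on $X\times(0,1]$. If $(x_0,t_0)\in Y'$ lies in the range where $\gamma(t_0)$ is a constant map to some point $p$, you would need $p=(x_0,t_0)$, yet your $\gamma(t)$ lands in all of $X\times(0,1]$ and need not preserve the slice $X\times\{t_0\}$ at all. The phrase ``flow $R|_{X\times\{\epsilon\}}$ along $R$'' also has no clear meaning if $R$ is just a single map.

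The paper's argument is in spirit close to your intuition of ``keep $R$ and append a transition to $C'$'', but it executes this by enlarging the domain rather than overwriting a strip: it glues the homotopy $F\colon C\simeq C'$ onto $R$ over an extended cylinder $X\times(0,2]$, then pulls back along a homeomorphism $X\times(0,2]\cong X\times(0,1]$ that is the identity on $X\times(0,\epsilon]$. This sidesteps all slice-by-slice bookkeeping and makes the retraction property automatic.
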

\begin{proof}
The retraction $R$ can be viewed as a subspace of $\left(X\times[0,1)\right)\times[0,1]$
and the contraction $C$ is the intersection of $R$ and $X\times\{0\}\times[0,1]$.
Let $F$ be a homotopy from the contraction $C$ to $C'$.
We can view $F$ as a subspace of $X\times[-1,0]\times[0,1]$,
where $F$ intersect $X\times\{0\}\times[0,1]$ is the contraction $C$
and $F$ intersect $X\times\{-1\}\times[0,1]$ is the contraction $C'$.
Gluing $R$ and $F$ along $C$ (cf. Figure~\ref{fig:top}), we get another retraction $R''$,
from $X\times[-1,1)$ to some subspace.
Clearly there is homeomorphism $h:X\times[-1,1)\cong X\times[0,1)$
such that $h|_{X\times(0,\epsilon]}$ is an isomorphism.
Then $h$ induces the required retraction $R'$ of $X\times[0,1)$
from the retraction $R''$ of $X\times[-1,1)$.

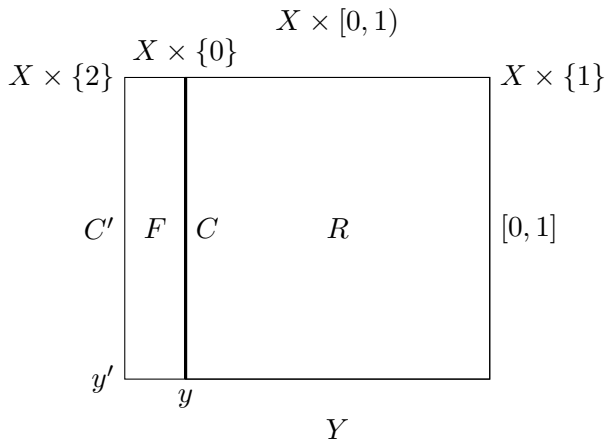
\begin{figure}[ht]\centering
\begin{tikzpicture}[scale=4]
\draw (0,0) rectangle (1,1);\draw (1,0) rectangle (-.2,1);
\draw[line width=.1pc] (0,1) -- (0,0);
\draw (1,.5) node[right] {$[0,1]$};
\draw (1,1) node[right] {$X\times\{1\}$};
\draw (-.2,1) node[left] {$X\times\{2\}$};
\draw (0,1) node[above] {$X\times\{0\}$};
\draw (.5,1.1) node[above] {$X\times [0,1)$};
\draw (.5,-.1) node[below] {$Y$};
\draw (.5,.5) node {$R$};\draw (-.1,.5) node {$F$};
\draw (0,.5) node[right] {$C$};
\draw (-.2,.5) node[left] {$C'$};
\draw (0,0) node[below] {$y$};
\draw (-.2,0) node[left] {$y'$};
\end{tikzpicture}
\caption{The retraction $R''$}
\label{fig:top}
\end{figure}
\end{proof}

\begin{theorem}\label{thm:c.e.s}
Let $\EG_0$ be a connected component of the exchange graph $\EG(\D)$
satisfying Assumption~\ref{ass}.
Then there is a canonical embedding $\iota:\EG_0 \hookrightarrow \Stab_0$,
unique up to homotopy, such that
\begin{itemize}
\item   $\iota(\h)\in(\cub(\h))^\circ$ for any heart $\h\in\EG_0$.
\item   $\iota(s)$ is contained in $(\cub(\h)\cup\cub(\tilt{\h}{\sharp}{S}))^\circ$
and transversally intersects $(\cub(\h)\cap\cc{\cub(\tilt{\h}{\sharp}{S})})^\circ$
at exactly one point,
for any edge $s:\h\to\tilt{\h}{\sharp}{S}$.
\end{itemize}
Further, there is a surjection $\pi_1(\EG_0)\twoheadrightarrow\pi_1(\Stab_0)$.
\end{theorem}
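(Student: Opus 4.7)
The plan is to exploit the cell decomposition of $\Stab_0$ provided by Woolf's Theorem~\ref{thm:woolf}: under Assumption~\ref{ass} every $\cub(\h)$ is an open contractible $n$-cell (essentially $H^n$), and its gluing into $\Stab_0$ is governed by~\eqref{eq:boundary}, so $\EG_0$ is visibly the ``dual graph'' of this decomposition. The construction of $\iota$ on vertices is then trivial: pick any point $\iota(\h)\in\cub(\h)^\circ$, and the first bullet is immediate.

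On an edge $s:\h\to\tilt{\h}{\sharp}{S}$, Theorem~\ref{thm:woolf} identifies $\cc{\cub(\h)}\cap\cc{\cub(\tilt{\h}{\sharp}{S})}$ with the codimension-one face $F_S$ in which the central charge of $S$ has degenerated to a negative real while all other simple charges remain in the open upper half-plane; near any interior point of $F_S$, $\Stab_0$ admits a local model $F_S^\circ\times\kong{C}$ whose upper and lower half-planes in the $\kong{C}$-factor recover the two adjacent open cells $\cub(\h)^\circ$ and $\cub(\tilt{\h}{\sharp}{S})^\circ$. Define $\iota(s)$ to be a short arc crossing $F_S$ transversally at one chosen point of this chart, prolonged inside the two contractible cells until it reaches $\iota(\h)$ and $\iota(\tilt{\h}{\sharp}{S})$. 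For uniqueness up to homotopy, any two admissible choices differ only inside contractible pieces and cross the same codimension-one faces once each; Lemma~\ref{lem:top} is tailored to precisely this comparison, interpolating two contractions of a cell to different base points via an ambient retraction fixing the prescribed boundary behaviour, so that the two realisations of $\iota$ are joined by a global homotopy.

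For the surjection $\pi_1(\EG_0)\twoheadrightarrow\pi_1(\Stab_0)$, apply general position. Given a loop $\gamma$ in $\Stab_0$ based at $\iota(\h_0)$, Assumption~\ref{ass} makes the stratification by cells locally finite, so $\gamma$ can be perturbed to miss every stratum of codimension $\geq 2$ and to meet each codimension-one face transversally in finitely many points. Between successive crossings $\gamma$ lies inside a single cell $\cub(\h')^\circ$, and contractibility lets us push that segment through $\iota(\h')$; each crossing is then absorbed into the corresponding edge-path $\iota(s)$, producing a combinatorial loop in $\EG_0$ whose image under $\iota$ is homotopic to $\gamma$.

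The main expected obstacle is justifying the transversality step rigorously. One must check that~\eqref{eq:boundary} endows $\Stab_0$ with a stratification whose codimension-two strata have genuine real codimension two (so that generic loops can avoid them) and whose codimension-one faces carry the advertised local product structure. This is exactly where finiteness of torsion pairs in Assumption~\ref{ass} is indispensable: without it, a loop could accumulate against infinitely many faces, and the general position argument, together with the inductive choice of $\iota$ on edges, would collapse.
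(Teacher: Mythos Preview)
Your proposal is correct and assembles the same ingredients as the paper: Woolf's cell decomposition, contractibility of each $\cub(\h)\cong H^n$, the identification of codimension-one faces with simple tilts, and a general-position step to push loops off the codimension~$\geq 2$ strata. The paper packages these slightly differently: rather than building $\iota$ first and then deforming an arbitrary loop onto its image, it removes the union $Y$ of all higher-codimension faces, observes via transversality that $\pi_1(\Stab_0-Y)\twoheadrightarrow\pi_1(\Stab_0)$, and then constructs an honest deformation retraction of $\Stab_0-Y$ onto $\EG_0$ by retracting each cell-with-open-codimension-one-faces onto a star $\mathbf{S}_{2n}$. This retraction yields the embedding and the isomorphism $\pi_1(\EG_0)\cong\pi_1(\Stab_0-Y)$ in one stroke, which is a bit stronger than what you extract. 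Note also that Lemma~\ref{lem:top} is used in the paper not for uniqueness of $\iota$ (that is immediate from contractibility of the cells and their shared faces) but to force the local star-retractions to agree on shared codimension-one faces, so that they glue to a global retraction. One small slip: with the paper's convention $0\leq\theta<1$ for $H$, the shared face $\cub(\h)\cap\cc{\cub(\tilt{\h}{\sharp}{S})}$ is where $Z(S)$ lies on the \emph{positive} real axis, not the negative one.
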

\begin{proof}
Let \[Y=\bigcup^{\h\in\EG_0}_{F_{>1}\subset\cub(\h)} F_{>1},\]
where the union is over faces $F_{>1}$ with codimension strictly greater than one.
Since $\Stab_0$ is a manifold,
$\pi_1(\Stab_0)$ can be computed using smooth loops and smooth homotopies between them
(cf. e.g. \cite[Theorem 3.8.16]{C}).
Since the decomposition of $\Stab_0$ into $n$ cells (and their faces)
is locally-finite by \eqref{eq:boundary},
we can choose a small neighbourhood $W$ of any loop $l$ which meets only finitely many faces.
By \cite[Proposition~12.4]{He}
we can perturb the loop $l$ by arbitrarily small homotopy (remaining within $W$),
such that the resulting representative loop for the original homotopy class lies in $\Stab_0-Y$.
Therefore, there is a surjection
\begin{gather}\label{eq:surj}
    \pi_1(\Stab_0-Y)\twoheadrightarrow\pi_1(\Stab_0).
\end{gather}
Next, we show that $\Stab_0-Y$ retracts to $\EG_0$.
To do so, it is sufficient to show the following (local) statements:
\numbers
\item For any heart $\h$, $\cub(\h)$ has $2n$ codimension one faces,
which correspond to the $2n$ simple tilts of $\h$,
where $n=\rank \K(\D)$.
\item For any codimension one face $F$ of some $\cub(\h)$,
$F^\circ$ is contractible to any chosen point in $F^\circ$.
\item For any heart $\h$ and
any fixed contractions for the $2n$ codimension one faces of $\cub(\h)$,
\begin{gather}\label{eq:retract}
    \cc{\cub(\h)}\cap\left(\Stab_0-Y\right)
\end{gather}
locally retracts to $\EG_0$.
That is, \eqref{eq:retract} retracts to the star $\mathbf{S}_{2n}$
such that the restrictions to the codimension one faces of the retraction
are the fixed contractions.
\ends
Note that $3^\circ$ ensures that we can glue the local retractions to a global one
and the star $\mathbf{S}_{m}$ is a tree with one internal node and $m$ leaves.

Every heart $\h$ in $\EG(\h)$ is finite by \cite[Theorem~5.7]{Q},
so $1^\circ$ is precisely \cite[Lemma~5.5]{B2}.
Moreover, recall that
$\cub(\h)$ is isomorphic to $H^n$, where $H$ is the upper half plane, as in \eqref{eq:H}.
So we have an isomorphism
\[
    I_{\h}\colon\cub(\h) \xrightarrow{\cong} \kong{R}^n\times [0,1)^n.
\]
By Theorem~\ref{thm:woolf},
any codimension one face of $\cub(\h)$ is isomorphic to $\kong{R}^n\times [0,1)^{n-1}$,
which implies $2^\circ$.
Then the boundary of \eqref{eq:retract} consists of the $2n$ interiors of
the codimension one faces of $\cub(\h)$.
Further, under the isomorphism $I_{\h}$, we have the following:
\begin{itemize}
\item the interior of \eqref{eq:retract}, which is $\cub(\h)^\circ$,
corresponds to $\kong{R}^n\times (0,1)^n$;
\item the boundary of \eqref{eq:retract} corresponds to the $2n$ interiors of
the codimension one faces of $\kong{R}^n\times [0,1)^n$,
each of which is $\kong{R}^n\times (0,1)^{n-1}$.
\end{itemize}
Therefore, the image of \eqref{eq:retract} under $I_{\h}$
can be retracted to the subspace of the $n$-cube $[0,1]^n$
obtaining by removing all its faces of codimension greater than one.
It is straightforward to see that (cf. Figure~\ref{fig:retraction} for $n=2$
and the shadow area in Figure~\ref{fig:LOGO})
this image can be further retracted to $\mathbf{S}_{2n}$.
By Lemma~\ref{lem:top}, we can modify the retraction
$\kong{R}^n\times [0,1)^{n-1}\to \mathbf{S}_{2n}$ near the boundaries
to get any specific contractions on the boundaries.
Thus $3^\circ$ follows.

\begin{figure}[ht]\centering
\begin{tikzpicture}[scale=2]
\draw (-1,-1) rectangle (1,1);
\draw[fill=black] (0,0) circle (.12pc);
\foreach \j in {1,...,4}
{
  \draw[fill=white] (\j*90+45:1.41) circle (.15pc);
  \draw[fill=black] (\j*90:1) circle (.11pc);
  \draw[line width=.2pc] (\j*90:1) -- (0,0);
  \draw (\j*90+45:1.2) edge[->,>=latex] (\j*90+45:.3);
  \draw (\j*90-19:.9) edge[->,>=latex,bend left=35] (\j*90-10:.66);
  \draw (\j*90-30:1) edge[->,>=latex,bend left=25] (\j*90-18:.45);
  \draw (\j*90+19:.9) edge[->,>=latex,bend right=35] (\j*90+10:.66);
  \draw (\j*90+30:1) edge[->,>=latex,bend right=25] (\j*90+18:.45);
}
\end{tikzpicture}
\caption{The retraction from a square to the star $\mathbf{S}_{4}$}
\label{fig:retraction}
\end{figure}
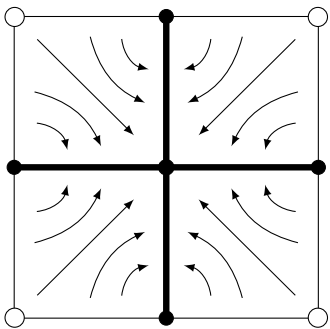

Thus, $\Stab_0-Y$ retracts to $\EG_0$,
which implies $\pi_1(\Stab_0-Y)=\pi_1(\EG_0)$.
Hence from \eqref{eq:surj} we have the surjection
$\pi_1(\EG_0)\twoheadrightarrow\pi_1(\Stab_0)$ as required.
We also see, from the retracting, that there is an embedding
$\iota:\EG_0 \hookrightarrow \Stab_0$, unique up to homotopy,
satisfying the required conditions.
\end{proof}

From now on, we will always fix a canonical embedding $\iota$ and
identify the exchange graph with its image in the space of stability conditions.

\section{Simply connectedness of Stab(Q)}\label{sec:sc.Q}
Let $Q$ be a Dynkin quiver from now on and $\Stab(Q)=\Stab(\D(Q))$.
By the connectedness of $\EG(Q)$,
we have a disjoint union $\Stab(Q)=\bigcup_{\h\in\EG(Q)}\cub(\h)$.
We aim to show the simply connectedness of $\Stab(Q)$ via $\EG(Q)$ in this section.
First, we can apply Theorem~\ref{thm:c.e.s} to $\EG(Q)$ and $\Stab(Q)$,
since clearly $\EG(Q)$ satisfies Assumption~\ref{ass} about finiteness.

\begin{corollary}\label{cor:surj}
Let $Q$ be a Dynkin quiver.
Then $\EG(Q)$ can be canonically embedded in $\Stab(Q)$ as in Theorem~\ref{thm:c.e.s}
with a surjection $\pi_1(\EG(Q))\twoheadrightarrow\pi_1(\Stab(Q))$.
\end{corollary}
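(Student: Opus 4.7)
The plan is simply to verify the hypotheses of Theorem~\ref{thm:c.e.s} for the pair $(\EG_0,\Stab_0)=(\EG(Q),\Stab(Q))$ and invoke it directly; no new argument is needed beyond checking Assumption~\ref{ass}.

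First I would observe that $\EG(Q)$ is indeed a connected component of $\EG(\D(Q))$: by the result of Keller--Vossieck \cite{KV} quoted just before Definition~\ref{def:line}, $\EG(\D(Q))$ is connected for $Q$ of Dynkin type, so $\EG(Q)$ is its unique (and hence connected) component. Consequently the decomposition $\Stab(Q)=\bigcup_{\h\in\EG(Q)}\cub(\h)$ stated in the paragraph above is exactly the decomposition $\Stab_0=\bigcup_{\h\in\EG_0}\cub(\h)$ from Section~\ref{sec:embed}.

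Next I would check Assumption~\ref{ass}. Because $Q$ is Dynkin, the path algebra $\k Q$ has finite representation type, so $\nzero=\mod\k Q$ contains only finitely many indecomposables and is in particular a finite length abelian category with exactly $n=|Q_0|$ simples. Every heart in $\EG(Q)$ is obtained from $\nzero$ by a finite sequence of simple tilts and hence (by \cite[Theorem~5.7]{Q}, already invoked inside the proof of Theorem~\ref{thm:c.e.s}) remains finite with $n$ simples. Finiteness of the set of torsion pairs in any such heart then follows from Lemma~\ref{lem:well known}: torsion pairs are in bijection with certain intermediate hearts between $\h[-1]$ and $\h$, and an intermediate finite heart is determined by which of the finitely many indecomposables in $\h\cup\h[-1]$ it contains. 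Alternatively, a torsion class in a finite length abelian category with finitely many indecomposables is a subset of the finite set of indecomposables closed under extensions and quotients, of which there are only finitely many.

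With Assumption~\ref{ass} verified, Theorem~\ref{thm:c.e.s} applies verbatim and produces both the canonical (up to homotopy) embedding $\iota\colon\EG(Q)\hookrightarrow\Stab(Q)$ satisfying the two bullet points and the surjection $\pi_1(\EG(Q))\twoheadrightarrow\pi_1(\Stab(Q))$. I do not anticipate any real obstacle; the only care required is pointing at the correct finiteness statement from \cite{Q} to cover \emph{all} hearts in $\EG(Q)$ rather than only the standard ones classified in Proposition~\ref{pp:standard}.
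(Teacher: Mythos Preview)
Your proposal is correct and matches the paper's approach exactly: the paper simply asserts that ``clearly $\EG(Q)$ satisfies Assumption~\ref{ass} about finiteness'' and applies Theorem~\ref{thm:c.e.s}, while you spell out why this finiteness holds. The only minor wrinkle is that your first argument for finiteness of torsion pairs (via intermediate hearts and indecomposables in $\h\cup\h[-1]$) is slightly imprecise since $\Ind\h'$ need not be contained in $\Ind\h\cup\Ind\h[-1]$, but your alternative argument via torsion classes being determined by finite subsets of $\Ind\h$ is clean and suffices.
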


Second, we prove two elementary but important lemmas.
\begin{lemma}\label{lem:lem}
Let $\h$ be a heart of $\hua{D}(Q)$ with $\Sim\h=\{S_1,...,S_n\}$ and
$\HA_{ij}=\Hom^\bullet(S_i,S_j)$.
Then for $i\neq j, j\neq k$,
\numbers
\item
    $\dim \HA_{ij}+\dim \HA_{ji} \leq 1$.
\item
    If $\HA_{ij},\HA_{jk},\HA_{ik}\neq 0$, then
    the multiplication $\HA_{ij}\otimes\HA_{jk}\to \HA_{ik}$
    is an isomorphism.
\ends
\end{lemma}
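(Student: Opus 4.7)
The plan is to exploit the heredity of $\D(Q)$ to reduce everything to calculations inside $\nzero$. By \eqref{eq:ar}, every indecomposable of $\D(Q)$ lies in $\nzero[b]$ for a unique $b\in\kong{Z}$, so we may write each simple $S_i$ of $\h$ as $T_i[b_i]$ with $T_i\in\Ind\nzero$ and $b_i\in\kong{Z}$. Heredity then gives
\[
\Hom^k_{\D}(S_i,S_j)\cong\Ext^{b_j-b_i+k}_{\nzero}(T_i,T_j),
\]
which vanishes unless $b_j-b_i+k\in\{0,1\}$. Combined with $\Hom^{\leq 0}_{\D}(S_i,S_j)=0$, valid for any two distinct simples of any heart, this confines $\HA_{ij}$ to at most two consecutive degrees $\geq 1$, and the symmetric analysis of $\HA_{ji}$ shows that $\HA_{ij}$ and $\HA_{ji}$ can simultaneously be nonzero only when $b_i=b_j$.

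For part (1) in the balanced case $b_i=b_j$, both $\HA_{ij}$ and $\HA_{ji}$ concentrate in degree one, with dimensions $\dim\Ext^1_{\nzero}(T_i,T_j)$ and $\dim\Ext^1_{\nzero}(T_j,T_i)$. The condition $\Hom^0_{\D}(S_i,S_j)=\Hom^0_{\D}(S_j,S_i)=0$ translates into $\Hom_{\nzero}(T_i,T_j)=\Hom_{\nzero}(T_j,T_i)=0$, so by \eqref{eq:euler form} the sum of these two dimensions equals
\[
-\<\dim T_i,\dim T_j\>-\<\dim T_j,\dim T_i\>,
\]
the negative of the symmetrized Euler form. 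For $Q$ Dynkin this coincides with the Cartan form of a simply-laced finite root system, and the dimension vectors of indecomposables are positive roots; since distinct positive roots of such a system pair to an integer in $\{-1,0,1\}$, the sum is at most $1$. In the remaining case, say $b_i>b_j$, the vanishing $\HA_{ji}=0$ is automatic, so it suffices to show $\dim\HA_{ij}=\dim\Hom(T_i,T_j)+\dim\Ext^1(T_i,T_j)\leq 1$. Here I would extract further constraints from $\Hom^0_{\D}(S_j,S_i)=\Hom^{-1}_{\D}(S_j,S_i)=0$ (which when $b_i-b_j=1$ give $\Hom_{\nzero}(T_j,T_i)=\Ext^1_{\nzero}(T_j,T_i)=0$) and apply Lemma~\ref{lem:homs} to confine $T_i$ and $T_j$ to a narrow region of the AR-quiver where the desired bound can be read off from sectional paths.

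For part (2), once part (1) ensures that each of $\HA_{ij},\HA_{jk},\HA_{ik}$ is one-dimensional, the multiplication map $\HA_{ij}\otimes\HA_{jk}\to\HA_{ik}$ is a linear map between one-dimensional spaces, hence an isomorphism iff it is nonzero; this nonvanishing reduces via the identifications above to the statement that compositions of nonzero morphisms along sectional paths in the AR-quiver of $\D(Q)$ are nonzero, a standard consequence of the mesh structure (again packaged by Lemma~\ref{lem:homs}). The hardest part, I expect, is the mixed-shift case of part (1): ruling out that both $\Hom(T_i,T_j)$ and $\Ext^1(T_i,T_j)$ are nonzero for a pair of indecomposables sitting in a common simple-minded collection is subtle. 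The cleanest route I see is induction along the edges of $\EG(Q)$, starting from the base $\h=\nzero$ (where both claims are transparent: part (2) is vacuous since Dynkin diagrams are trees, and part (1) reduces to counting arrows of a simply-laced quiver), and then showing that a single simple HRS-tilt preserves both conditions via the explicit mutation formulas for the simples of the tilted heart.
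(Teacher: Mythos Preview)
Your approach to part~(1) is workable but more case-laden than the paper's.  The paper never splits on the relative shift $b_i-b_j$.  Instead it fixes the unique degree $\delta_1>0$ with $\HA^{\delta_1}_{ij}\neq 0$, sets $A=S_i$ and $B=S_j[\delta_1]$, and uses Lemma~\ref{lem:homs} to place both $A$ and $B$ inside a \emph{new} standard heart $\nzero'$ determined by the section $\Ps(A)$.  In $\nzero'$ the object $A$ is a simple projective, so its dimension vector is a unit vector; the single computation $0<\langle\mathbf{a}-\mathbf{b},\mathbf{a}-\mathbf{b}\rangle=2-\langle\mathbf{a},\mathbf{b}\rangle$ then gives $\dim\HA_{ij}\leq 1$, while the interval bound from Lemma~\ref{lem:homs} already forces $\HA_{ji}=0$.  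Your route through the fixed heart $\nzero$ can be completed (in the unbalanced case one deduces $\Hom^\bullet(T_j,T_i)=0$ from $\Hom^{\leq 0}(S_j,S_i)=0$, uses Lemma~\ref{lem:homs} to see that $\Hom(T_i,T_j)$ and $\Ext^1(T_i,T_j)$ cannot both be nonzero, and bounds the survivor via $\langle\mathbf{a}\mp\mathbf{b},\mathbf{a}\mp\mathbf{b}\rangle>0$), but it is strictly more work, and the induction along $\EG(Q)$ you propose as a fallback is unnecessary.

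For part~(2) there is a real gap.  The claim that nonvanishing of the composition ``reduces to compositions of nonzero morphisms along sectional paths'' is unjustified: the nonzero maps in $\HA_{ij}$ and $\HA_{jk}$ have no reason to be sectional, so the mesh/sectional-path fact does not apply.  The paper's argument is short and different, and again hinges on the auxiliary heart $\nzero'$.  First an Euler-form contradiction (using $\langle\mathbf{b}+\mathbf{c}-\mathbf{a},\mathbf{b}+\mathbf{c}-\mathbf{a}\rangle=0$) rules out the wrong degree and forces $\delta_3=\delta_1+\delta_2$.  Then, because $A$ is \emph{simple} in $\nzero'$, any nonzero $f\colon A\to B$ is injective; applying $\Hom(-,C)$ to $0\to A\to B\to D\to 0$ shows that if $f^*\colon\Hom(B,C)\to\Hom(A,C)$ failed to be an isomorphism, one would have both $\Hom(D,C)\neq 0$ and $\Hom(D,C[1])\neq 0$, contradicting Lemma~\ref{lem:homs}.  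The idea you are missing in both parts is this change of standard heart making $S_i$ a simple projective; once you have it, neither the case analysis nor the induction on $\EG(Q)$ is needed.
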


\begin{proof}
Suppose that $\HA^{\delta_1}_{ij}\neq0$ for some $\delta_1>0$.
Let $A=S_i$ and $B=S_j[\delta_1]$.
By Lemma~\ref{lem:homs}, we have
\[
    B \in \Big[\Ps(A),\Ps^{-1} \big( \tau(A[1]) \big) \Big].
\]
Thus $\HA^m_{ij}=0$ for $m\neq\delta_1$
and $\HA^m_{ji}=0$ for $m > 1-\delta_1$.
But $\HA_{ji}$ is also concentrated in positive degrees
and hence $\HA_{ji}=0$.

By Proposition~\ref{pp:standard},
there is a quiver $Q'$ such that,
$\Ps(A)$ consists of precisely the projectives in $\mod \k Q'$.
Moreover, we have $B\in\mod \k Q'$.
Let $\mathbf{b}=\dimvec B$ and $\mathbf{a}=\dimvec A$,
then we have
\begin{equation}
\label{eq:dim}
 \left\{
  \begin{array}{l}
    \dim\Hom(A,B)-\dim\Ext^1(A,B)=\<\mathbf{a},\mathbf{b}\>=\dim\HA^{\delta_1}_{ij},\\
    \dim\Hom(B,A)-\dim\Ext^1(B,A)=\<\mathbf{b},\mathbf{a}\>=\dim\HA^{\delta_1}_{ji}=0.
  \end{array}
 \right.
\end{equation}
Since $Q'$ is of Dynkin type,
the quadratic form $q(\mathbf{x})=\<\mathbf{x},\mathbf{x}\>$
is positive definite and, furthermore,
since $A\not\cong B$, we have $\mathbf{a}\neq \mathbf{b}$.
Hence
\[
    0<\<\mathbf{a}-\mathbf{b},\mathbf{a}-\mathbf{b}\>=2-\<\mathbf{a},\mathbf{b}\>
\]
i.e. $\dim\HA^{\delta_1}_{ij} \leq1$.
Thus $1^\circ$ follows.

For $2^\circ$, suppose that $\HA^{\delta_2}_{jk}\neq0$.
Since $B\in\nzero'$, Lemma~\ref{lem:homs} implies that
\[
    S_k[\delta_1+\delta_2]\in\left(\nzero'\right)[1]\cup\nzero'.
\]
Suppose that $\HA^{\delta_3}_{ik}\neq 0$ and we have
$C=S_k[\delta_3]$ is also in $\nzero'$.
Thus either $\delta_3=\delta_1+\delta_2$ or $\delta_3=\delta_1+\delta_2-1$.

Suppose that $\delta_3=\delta_1+\delta_2-1$.
Let $\mathbf{c}=\dimvec C$.
As in \eqref{eq:dim}, we have
\begin{equation*}
 \left\{
  \begin{array}{l}
       \<\mathbf{a},\mathbf{b}\>=1,\\
    \<\mathbf{b},\mathbf{a}\>=0,\\
  \end{array}
 \right.
  \left\{
  \begin{array}{l}
    \<\mathbf{a},\mathbf{c}\>=1,\\
    \<\mathbf{c},\mathbf{a}\>=0,
  \end{array}
 \right.
  \left\{
  \begin{array}{l}
    \<\mathbf{b},\mathbf{c}\>=-1,\\
    \<\mathbf{c},\mathbf{b}\>=0.
  \end{array}
 \right.
\end{equation*}
Because $A$ is simple, $\mathbf{a}\neq \mathbf{b}+\mathbf{c}$.
But $\<\mathbf{b}+\mathbf{c}-\mathbf{a},\mathbf{b}+\mathbf{c}-\mathbf{a}\>=0$,
which is a contradiction.
Therefore $\delta_3=\delta_1+\delta_2$.

Since $A$ is a simple, any non-zero $f\in \Hom(A,B)$ is injective and
so gives a short exact sequence
$0\to A \to B\to D\to 0$ in $\mod \k Q'$.
Applying $\Hom(-,C)$ to it, we get an exact sequence
\begin{gather*}
    0 \to \Hom(D,C) \to \Hom(B,C) \xrightarrow{f^*} \Hom(A,C)\to\\
     \to \Hom(D,C[1]) \to \Hom(B,C[1])=0
\end{gather*}
If $f^*$ is not an isomorphism,
then $\Hom(D,C)\neq0$ and $\Hom(D,C[1])\neq0$,
contradicting Lemma~\ref{lem:homs}.
Hence multiplication $\HA_{ij}\otimes\HA_{jk}\to \HA_{ik}$,
i.e. composition $\Hom(A,B)\otimes\Hom(B,C)\to\Hom(A,C)$,
is an isomorphism, as required.
\end{proof}

\begin{lemma}\label{lem:simple}
Let $\h$ be a heart in $\D(Q)$
and $S_i, S_j$ be two simples in $\Sim\h$.
Suppose that $\Hom^1(S_i,S_j)=0$.
Let $\h_i=\tilt{\h}{\sharp}{S_i}, \h_j=\tilt{\h}{\sharp}{S_j}$
and $\h_{ij}=\tilt{(\h_j)}{\sharp}{S_i}$.
Then
\numbers
\item
    either $\Hom^1(S_j, S_i)=0$ and we have
    $\tilt{ (\h_i) }{\sharp}{S_j}=\h_{ij}$;
\item
    or $\Hom^1(S_j, S_i)=\k$ and we have $\h_{ij}=\tilt{  (\h_*)  }{\sharp}{S_j}$,
    where $T_j=\phi^{-1}_{S_i}(S_j)$ and $\h_*=\tilt{ (\h_i) }{\sharp}{T_j}$.
\ends
\end{lemma}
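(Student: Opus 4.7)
The plan is to identify each side of the claimed equalities with an HRS-tilt of $\h$ at an explicit torsion pair, and then invoke Lemma~\ref{lem:well known}, which gives a bijection between torsion pairs in $\h$ and hearts in $[\h,\h[1]]$.

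For $1^\circ$, my first observation is that the double vanishing $\Hom^1(S_i,S_j)=\Hom^1(S_j,S_i)=0$ forces $S_j$ to remain simple in $\h_i$: a nontrivial quotient of $S_j$ in $\h_i$ would arise from a nonzero map $S_j\to S_i[1]$, exactly an element of $\Hom^1(S_j,S_i)$, and symmetrically $S_i$ is simple in $\h_j$. Both iterated tilts $\h\to\h_i\to\tilt{(\h_i)}{\sharp}{S_j}$ and $\h\to\h_j\to\tilt{(\h_j)}{\sharp}{S_i}$ are therefore well-defined two-step forward tilts that both shift $S_i$ and $S_j$ up to $[1]$, so each lies in $[\h,\h[1]]$ and corresponds to the torsion pair on $\h$ with torsion-free part $\operatorname{add}(S_i\oplus S_j)$, which is already extension-closed by the double vanishing. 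Lemma~\ref{lem:well known} forces the two hearts to coincide.

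For $2^\circ$, I first unwind $T_j=\phi_{S_i}^{-1}(S_j)$. Lemma~\ref{lem:lem} gives $\dim\Hom^1(S_j,S_i)=1$, so the twist formula produces a triangle $S_i\to T_j\to S_j\to S_i[1]$, equivalently a non-split short exact sequence $0\to S_i\to T_j\to S_j\to 0$ in $\h$. Applying $\Hom(-,S_i)$ to this sequence yields $\Hom(T_j,S_i)=0$, placing $T_j$ in the torsion class of $\h\to\h_i$, and the same triangle identifies $T_j$ as the simple of $\h_i$ replacing $S_j$. Rotating the triangle to $S_j\to S_i[1]\to T_j[1]\to S_j[1]$ exhibits $S_j$ as the fiber of $S_i[1]\to T_j[1]$, which is precisely the description of the simple of $\h_*=\tilt{(\h_i)}{\sharp}{T_j}$ replacing $S_i[1]$; hence $S_j\in\Sim\h_*$. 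A further forward tilt of $\h_*$ at $S_j$ then sends $S_j\mapsto S_j[1]$ and, by the same triangle rotation, modifies $T_j[1]$ to $S_i[1]$, the fiber of $T_j[1]\to S_j[1]$.

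To conclude $\tilt{(\h_*)}{\sharp}{S_j}=\h_{ij}$, both hearts lie in $[\h,\h[1]]$, and I will describe the corresponding torsion pairs in $\h$. The three-step tilt $\h\to\h_i\to\h_*\to\tilt{(\h_*)}{\sharp}{S_j}$ shifts $S_i$, $T_j$, and $S_j$ all up to $[1]$, while $\h\to\h_j\to\h_{ij}$ shifts $S_j$ and $S_i$ up to $[1]$; in both cases the torsion-free part on $\h$ is the extension-closure $\langle S_i,S_j\rangle$, which already contains $T_j$ since $T_j$ extends $S_j$ by $S_i$. Lemma~\ref{lem:well known} closes the argument. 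The main obstacle I expect is the careful bookkeeping of modified simples across the three forward tilts of the pentagon, and in particular recognizing that the single triangle $S_i\to T_j\to S_j\to S_i[1]$ and its successive rotations control the modification of the simple at each stage; once this identification is in place, the torsion-pair description finishes the proof without any dependence on the remaining simples of $\h$.
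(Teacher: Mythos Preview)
Your argument is correct. The paper's own proof is a two-line sketch: it invokes Lemma~\ref{lem:lem} for the bound $\dim\Hom^\bullet(S_j,S_i)\leq1$ and then cites \cite[Proposition~5.2]{Q} (the explicit formula for the simples of a simple tilt), saying the result ``follows by a direct calculation''. What you have written is essentially that direct calculation carried out in full, with one genuine difference in the concluding step.

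The paper's implicit route is to compute $\Sim$ of each iterated tilt via \cite[Proposition~5.2]{Q} and check that the two sets of simples coincide; this requires tracking what happens to \emph{every} $S_k$, not just $S_i,S_j$. Your route instead observes that all hearts in question lie in $[\h,\h[1]]$ (because each tilted simple lies in $\h$) and then identifies the corresponding torsion pairs in $\h$: in both $1^\circ$ and $2^\circ$ the torsion-free part is the extension closure $\langle S_i,S_j\rangle$, and Lemma~\ref{lem:well known} finishes. This is cleaner, since it sidesteps any bookkeeping on the remaining simples $S_k$ ($k\neq i,j$). One small point you gloss over: to know that $S_j$ is the new simple of $\h_*$ replacing $S_i[1]$ (rather than some longer iterated extension), you implicitly use $\dim\Hom(S_i,T_j)=1$, which follows from applying $\Hom(S_i,-)$ to the triangle $S_i\to T_j\to S_j$ together with $\Hom^\bullet(S_i,S_j)=0$ from Lemma~\ref{lem:lem}; this is routine but worth making explicit.
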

\begin{gather}\label{eq:45}{
\xymatrix@C=1.4pc@R=1pc{
    &   \h_i    \ar[dr]^{S_j}\\
    \h  \ar[ur]^{S_i} \ar[dr]_{S_j} &&  \h_{ij}\\
    &   \h_j    \ar[ur]_{S_i}
}\qquad
\xymatrix@C=1pc@R=0.8pc{
    &   \h_i    \ar[rr]^{T_j} &&      \h_* \ar[dr]^{S_j} \\
    \h  \ar[ur]^{S_i}\ar[ddrr]_{S_j}  &&&&   \h_{ij}\\\\
    &&   \h_j   \ar[uurr]_{S_i}
}}
\end{gather}
\begin{proof}
By Lemma~\ref{lem:lem},
we have $\dim\Hom^\bullet(S_j,S_i)\leq1$.
Thus $\Hom^1(S_j, S_i)$ equals zero or $\k$.
We only prove the second case while the first one is much simpler.
By \cite[Proposition~5.2]{Q},
we know how the simples change during tilting.
Since the simples determine a heart, we only need to show that
for any other simple $X$ in $\h$, it turns to the same simple
in $\h_{ij}$ and $\tilt{  (\h_*)  }{\sharp}{S_j}$.
By Lemma~\ref{lem:lem}, we have the following cases.
\begin{itemize}
\item
$\Hom^\bullet(S_i,X)=\Hom^\bullet(X,S_i)=0$
or $\Hom^\bullet(S_j,X)=\Hom^\bullet(X,S_j)=0$.
\item $\Hom^\bullet(X,S_i)=\Hom^\bullet(X,S_j)=0$ and
\[
    \Hom^\bullet(S_i,X)=\k[-t],\quad\Hom^\bullet(S_j,X)=\k[-t-1]
\]
for some $t>1$.
Moreover, we have an isomorphism
\[
    \Hom^{t+1}(S_j,X)\cong\Hom^{1}(S_j,S_i)\otimes\Hom^{t}(S_i,X).
\]
\item $\Hom^\bullet(S_i,X)=\Hom^\bullet(S_j,X)=0$ and
\[
    \Hom^\bullet(X,S_j)=\k[-t],\quad \Hom^\bullet(X,S_i)=\k[-t-1]
\]
for some $t>1$.
Moreover, we have an isomorphism
\[
    \Hom^{t+1}(X,S_j)\cong\Hom^{t}(X,S_i)\otimes\Hom^{1}(S_i,S_j).
\]
\end{itemize}
By the formula in \cite[Proposition~5.2]{Q},
a direct calculation shows that $X$ indeed becomes the same simple
in $\h_{ij}$ and $\tilt{  (\h_*)  }{\sharp}{S_j}$, in any of the cases above,
which completes the proof.
\end{proof}

\begin{remark}
An alternative proof, which covers more general situation,
can be found in \cite{Qiu-Woolf}.
\end{remark}

Next, we discuss the fundamental group of $\EG(Q)$.

\begin{proposition}\label{pp:45}
If $Q$ is of Dynkin type,
then $\pi_1(\EG_N(Q,\nzero))$ is generated by
squares and pentagons as in \eqref{eq:45} for any $N\geq 2$.
Further, $\pi_1(\EG(Q))$ is generated by such squares and pentagons.
\end{proposition}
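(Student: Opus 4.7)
The plan is to prove the statement first for the finite subgraph $\EG_N(Q,\nzero)$ (which is finite by Proposition~\ref{pp:lim}) and then deduce the full statement by a direct-limit argument. The crucial local input is Lemma~\ref{lem:simple}: for any heart $\h$ and any pair of distinct simples $S_i, S_j\in\Sim\h$, the two simple forward tilts $\h\to\tilt{\h}{\sharp}{S_i}$ and $\h\to\tilt{\h}{\sharp}{S_j}$ fit together into either a square or a pentagon as in \eqref{eq:45}, according to whether $\Hom^1(S_j,S_i)=0$ or not---note that by Lemma~\ref{lem:lem} at most one of $\Hom^1(S_i,S_j)$ and $\Hom^1(S_j,S_i)$ can be nonzero. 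Thus every "choice of which simple to tilt first" at a common heart is accounted for by one of our distinguished 2-cells.

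For the finite case, the proposed strategy is to take an arbitrary loop $\ell=(\h_0,\h_1,\ldots,\h_k=\h_0)$ in $\EG_N(Q,\nzero)$ and to reduce it to the trivial loop, working modulo squares and pentagons, by iterating two kinds of elementary moves: (i) cancellation of a backtracking segment $\h_{i-1}\to\h_i\to\h_{i-1}$, and (ii) rerouting two consecutive tilts sharing a common heart around the square or pentagon provided by Lemma~\ref{lem:simple}. To guarantee termination I would introduce a numerical invariant on loops, such as $\sum_i \pf(\h_i)$ built from the position function of Definition/Lemma~\ref{def:pf}, and show that a well-chosen sequence of these moves strictly decreases it until the loop collapses. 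The main obstacle I expect is handling mixed sequences of forward and backward tilts: Lemma~\ref{lem:simple} is stated for two forward tilts sharing a source, so inverse edges appearing in the loop must first be transported, using squares and pentagons, into positions where the lemma applies. The finiteness of $\EG_N(Q,\nzero)$ and the Dynkin hypothesis (entering through the dimension estimate in Lemma~\ref{lem:lem}) are essential to bound and control this process.

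Finally, the statement for the full graph $\EG(Q)$ follows by a straightforward compactness argument. Any combinatorial loop visits only finitely many hearts, all of which---together with the hearts appearing in any chosen filling by squares and pentagons---lie in some $\EG_{2M}(Q,\nzero[-M])$ for $M\gg 0$ by Proposition~\ref{pp:lim}. Since the finite case applies after translating by the auto-equivalence $[-M]$ of $\EG(Q)$, the loop is a product of squares and pentagons in $\EG(Q)$ as well.
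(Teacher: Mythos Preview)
Your overall architecture matches the paper's: settle the finite interval $\EG_N(Q,\nzero)$ first and then pass to $\EG(Q)$ by the limit formula of Proposition~\ref{pp:lim}; the latter reduction is exactly what the paper does. The gap is in the termination argument for the finite case. Rerouting via Lemma~\ref{lem:simple} happens at a \emph{source} $\h$ of the loop (two outgoing forward tilts), and it replaces $\h$ by the strictly larger heart $\h_{ij}$ (and, in the pentagon case, inserts the still larger $\h_*$). Hence your proposed invariant $\sum_i \pf(\h_i)$ \emph{increases} under move~(ii), so it cannot serve as a descent measure; and you have already flagged that the ``mixed forward/backward'' configuration is exactly where you do not yet know how to apply the lemma.

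The paper resolves both issues with one idea: pick a source $\h$ in $c$ that is \emph{minimal} in the partial order on hearts (any minimal vertex of $c$ is automatically a source, so this also answers your forward/backward obstacle), and measure progress by the cardinality of the up-set
\[
D(c)=\{\h' : \exists\, \h_0\in c,\ \h_0\le \h'\le \nzero[N-1]\}.
\]
Rerouting at a minimal source deletes $\h$ from $D(c)$ while the new vertices $\h_{ij}$ (and $\h_*$) lie above $\h_i,\h_j\in c$ and hence were already in $D(c)$; thus $\#D(c)$ strictly drops. One further point you omit: one must check that the square/pentagon stays inside $\EG_N(Q,\nzero)$, i.e.\ that $\h_{ij}\le\nzero[N-1]$. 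This is not automatic and the paper invokes \cite[Lemma~5.4]{Q} to guarantee it. With these two corrections---the right invariant and the boundedness check---your plan becomes the paper's proof.
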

\begin{proof}
For any cycle $c$ in $\EGp_N(Q,\nzero)$,
\[
    D(c)=\{\h \mid \exists \h'\in c, \h'\leq \h\leq \nzero[N-2]\}
\]
is finite, by Proposition~\ref{pp:lim}.
We use induction on $\#D(c)$ to prove that any cycle $c$ in $\EGp_N(Q,\nzero)$
is generated by squares and pentagons and hence the first statement will follow.
If $\#D(c)=1$, then $c$ is trivial.
Suppose that $\#D(c)>1$ and
any cycle $c'\subset\EGp_N(Q,\nzero)$ with $\#D(c')<\#D(c)$
is generated by the squares and pentagons.
Choose a source $\h$ in $c$ such that $\h'\nless\h$ for any other source $\h'$ in $c$.
Let $S_i$ and $S_j$ be the arrows coming out at $\h$.
If $i=j$ we can delete them in $c$ to get a new cycle $c'$.
If $i\neq j$,
we know that $S_i:\h\to\h_i$ and $S_j:\h\to\h_j$ are either in a square or a pentagon
as in \eqref{eq:45}.
By the second part of \cite[Lemma~5.4]{Q},
we know that $\Ho{N-1}(S_i)=0$ and hence
$\h_{ij}=\tilt{(\h_j)}{\sharp}{S_i}\in\EGp_N(Q,\nzero)$.
Thus this square or pentagon is in $\EGp_N(Q,\nzero)$
and we can replace $S_i$ and $S_j$ in $c$ by other edges in this square or pentagon
to get a new cycle $c'\subset\EGp_N(Q,\nzero)$.
Either way, we have $D(c')\subset ( D(c)-\{\h\} )$ for the new cycle $c'$
and we are done.

Now choose any cycle $c$ in $\EG(Q)$.
By \eqref{eq:lim eg},
all hearts in $c[k]$ are in $\EGp_N(Q,\nzero)$ for some integer $k$ and $N\gg1$.
Then the second statement follows from the first one.
\end{proof}

We precede to show that the generators for $\EG(Q)$ are trivial in $\Stab(Q)$.

\begin{lemma}\label{lem:sp}
Any square or pentagon as in \eqref{eq:45} is trivial in $\pi_1(\Stab(Q))$.
\end{lemma}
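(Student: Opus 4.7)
The plan is, for each square or pentagon of \eqref{eq:45}, to exhibit an explicit null-homotopy of $\iota(\gamma)$ inside $\Stab(Q)$, where $\iota$ is the canonical embedding of Theorem~\ref{thm:c.e.s} and $\gamma$ denotes the loop. First I would fix a basepoint $\sigma_0 = (\h, Z_0)$ in the interior of $\cub(\h)$ and consider the two-parameter family obtained by keeping $Z_0(S_k)$ fixed for all simples $S_k\in\Sim\h$ with $k\neq i,j$ and letting $(Z(S_i), Z(S_j))$ vary over an open subset $V\subset \kong{C}^*\times\kong{C}^*$. By Proposition~\ref{pp:ss}, this prescribes a continuous map $\Phi\colon V \to \Stab(Q)$ whose domain is a 2-disk containing the local model of the loop.

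For the square case, Lemma~\ref{lem:lem}(1) combined with the hypothesis $\Hom^1(S_i,S_j) = \Hom^1(S_j,S_i) = 0$ forces one of $\Hom^\bullet(S_i, S_j), \Hom^\bullet(S_j, S_i)$ to vanish entirely, so the only indecomposables built from $S_i, S_j$ are these two objects themselves. The walls crossed by $V$ are therefore $\{Z(S_i)\in\kong{R}_{<0}\}$ and $\{Z(S_j)\in\kong{R}_{<0}\}$, and they cut $\kong{C}^* \times \kong{C}^*$ into four chambers corresponding precisely to $\cub(\h), \cub(\h_i), \cub(\h_{ij}), \cub(\h_j)$. Then $\iota(\gamma)$ crosses each wall once going in and once coming back, so its projection to each factor of $\kong{C}^*$ has winding number zero around the origin, and it is contractible inside $V$.

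For the pentagon case with $\dim\Hom^1(S_j, S_i)=1$, the extension $T_j$ fitting in a short exact sequence $0\to S_i \to T_j \to S_j \to 0$ in $\h$ contributes a third wall $\{Z(S_i) + Z(S_j) \in \kong{R}_{<0}\}$. The three walls now partition $\kong{C}^* \times \kong{C}^*$ into five chambers, which by Lemma~\ref{lem:simple}(2) correspond exactly to $\cub(\h), \cub(\h_i), \cub(\h_*), \cub(\h_{ij}), \cub(\h_j)$; crossing the third wall realizes the edge $\h_i\to\h_*$ labelled $T_j$. Again the loop crosses each wall once in each direction with zero winding around the origin in each factor, hence is null-homotopic in $V$.

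The main obstacle will be to justify that $V$ genuinely embeds as a 2-disk in $\Stab(Q)$, i.e.\ that the HN-property is preserved throughout the variation. The key point is that by Lemma~\ref{lem:lem} only the objects $S_i, S_j$ (square case) or $S_i, S_j, T_j$ (pentagon case) can participate as new semistables, since these are the only indecomposables obtainable from irreducible maps between $S_i$ and $S_j$; consequently no further walls appear in $V$. Combined with Proposition~\ref{pp:ss}, this yields the desired 2-disk and hence the null-homotopy.
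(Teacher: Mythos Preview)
Your strategy and the paper's are genuinely different. The paper does \emph{not} build a two-parameter family in the central-charge coordinates $(Z(S_i),Z(S_j))$; instead it picks a single carefully chosen $\sigma\in\cub(\h)$ (with $Z(S_k)$ of phase $\tfrac12$ for $k\neq i,j$ and $Z(S_i),Z(S_j)$ of very small phase) and uses the global $\kong{C}$-action on $\Stab(Q)$ to produce two arcs $L,L'$ from $\cub(\h)$ to $\cub(\h_{ij})$, one realising each side of the pentagon. The whole loop then sits inside the contractible prism $\kong{C}\cdot\cub(\h)$. The advantage of this route is that the $\kong{C}$-action is already known to act on $\Stab(Q)$, so there is nothing to check about existence of the stability conditions along the way.

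Your approach has a real gap at exactly the point you flag as ``the main obstacle''. First, Proposition~\ref{pp:ss} only gives you $\Phi$ on $H\times H\subset\kong{C}^*\times\kong{C}^*$, i.e.\ on $\cub(\h)$; it says nothing once $Z(S_i)$ or $Z(S_j)$ leaves the upper half-plane, so you cannot invoke it to define $\Phi$ on a disc $V$ that meets the other chambers. You would have to lift along the local homeomorphism of Theorem~\ref{thm:B1}, and for that you must control the hearts encountered. Second, your wall analysis is not correct: once you cross into $\cub(\h_i)$ the simples of $\h_i$ are $S_i[1]$ together with $\psi^\sharp_{S_i}(S_k)$ for all $k\neq i$, and whenever $\Hom^1(S_k,S_i)\neq0$ the new simple has central charge $Z(S_k)+Z(S_i)$, which now varies with $Z(S_i)$. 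Thus there are typically further walls in $V$ coming from simples $S_k$ with $k\neq i,j$, and Lemma~\ref{lem:lem} does not rule them out. The paper's choice of $\sigma$ (all other phases equal to $\tfrac12$) and of a \emph{small} rotation $\varepsilon\in[-4\delta,0]$ is precisely what guarantees that these extra walls are not crossed; without an analogous restriction your $V$ need not map to a disc in $\Stab(Q)$. Finally, you also need to argue that $\iota(\gamma)$ is homotopic to a loop lying in $\Phi(V)$, which you have not addressed; the paper handles this by appealing to Corollary~\ref{cor:surj} and the explicit chamber-crossing of $L$ and $L'$.
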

\begin{proof}

\begin{figure}[hb]\centering
\begin{tikzpicture}[scale=1]
\draw[dotted,->,>=latex] (-2,0) -- (6,0) node[right]{$x$};
\draw[dotted,->,>=latex] (0,-1) -- (0,4) node[above]{$y$};
\draw (0,0) node[below left]{$0$};
\draw[thick,->,>=latex] (0,0) -- (180/2:3)    node[left]{$Z(S_k)$};
\draw[dotted,thick] (0,0) -- (4*180/17:5.5)  node[above right] {$$};
\draw[dotted,thick] (0,0) -- (180+4*180/17:1.5)  ;
\draw[thick,->,>=latex] (0,0) -- (180/17:3)   node[right]{$Z(S_i)$};
\draw[thick,->,>=latex] (0,0) -- (3*180/17:3) node[above right]{$Z(S_j)$};
\draw[thick,->,>=latex] (0,0) -- (2*180/17:5.9) node[right]{$Z(T_j)$};
\end{tikzpicture}
\caption{}\label{fig:sc}
\end{figure}

Recall that $\EG(Q)$ can be embedded into $\Stab(Q)$ up to homotopy,
by Corollary~\ref{cor:surj}.
We claim that, up to homotopy,
the image of a pentagon or a square (starting as a heart $\h$)
is contained in the contractible prism
\[
    \mathbf{P}=\kong{C}\cdot\cub(\h)\cong \kong{C}\cdot H^n,
\]
where $H$ is the upper half plane in \eqref{eq:H}.
If so, the lemma follows.

For the pentagon case, suppose that we are in the situation
of case $2^\circ$ of Lemma~\ref{lem:simple}.
Let $\Sim\h=\{S_1,...,S_n\}$.
Consider the stability condition $\sigma$ with heart is $\h$ determined by
\[\begin{cases}
    Z(S_k)=\exp(\frac{1}{2}\pi\mathbf{i}) \quad  k\neq i,j,\\
    Z(S_i)=\exp(\delta\pi\mathbf{i}),\\
    Z(S_j)=\exp(3\delta\pi\mathbf{i}),
\end{cases}\]
for some $\delta>0$.
Since $\dim\Hom^1(S_j,S_i)=1$,
there is an unique extension $T_j$ of $S_j$ on top of $S_i$.
Moreover, $T_j$ has phase $2\delta$.
Thus we can choose $\delta$ so small that
any stable object other than $S_i, T_j$ and $S_j$ has phase larger than $4\delta$
(cf. Figure~\ref{fig:sc}).
Consider the interval
$L=\{\sigma_\varepsilon\}_{\varepsilon\in[-4\delta,0]}$,
where $\sigma_\varepsilon=\varepsilon\cdot\sigma$.
We have
\[\begin{cases}
       \sigma_\varepsilon\in\cub(\h),   & \varepsilon\in(-\delta,0],\\
       \sigma_\varepsilon\in\cub(\h_i),   & \varepsilon\in(-2\delta,-\delta),\\
       \sigma_\varepsilon\in\cub(\h_*),   & \varepsilon\in(-3\delta,-2\delta),\\
       \sigma_\varepsilon\in\cub(\h_{ij}),   & \varepsilon\in[-4\delta,-3\delta).
\end{cases}\]
Therefore $L$ is homotopic to the path $\h\to\h_i\to\h_*\to\h_{ij}$ in $\EG(Q)$
by Corollary~\ref{cor:surj}.
Similarly, consider the stability condition $\sigma'$ with heart is $\h$ determined by
\[\begin{cases}
    Z'(S_k)=\exp(\frac{1}{2}\pi\mathbf{i}) \quad  k\neq i,j,\\
    Z'(S_i)=\exp(3\delta'\pi\mathbf{i}),\\
    Z'(S_j)=\exp(\delta'\pi\mathbf{i}),
\end{cases}\]
for some $\delta'>0$.
Then when $\delta'$ is very small, the interval
$L'=\{\sigma'_\varepsilon\}_{\varepsilon\in[-4\delta',0]}$ is homotopic to
the path $\h\to\h_j\to\h_{ij}$ in $\EG(Q)$ by Corollary~\ref{cor:surj}.

Notice that the end points of $L$ are
in (the interior of) $\cub(\h)\cap\mathbf{P}$ and $\cub(\h_{ij})\cap\mathbf{P}$ respectively.
So are the end points of $L'$.
Thus, we can choose two paths in $\cub(\h)\cap\mathbf{P}$ and $\cub(\h_{ij})\cap\mathbf{P}$
respectively, connecting $L$ and $L'$ to get a circle $c$.
Then $c$ is homotopic the pentagon in \eqref{eq:45}
and contained in $\mathbf{P}$, as required.

Similarly for the square case.
\end{proof}

We end this section by proving the simply connectedness of $\Stab(Q)$.

\begin{theorem}\label{thm:sc1}
If $Q$ is of Dynkin type, then $\Stab(Q)$ is simply connected.
\end{theorem}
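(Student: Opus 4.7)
The plan is to assemble the three ingredients that the preceding subsections have been built to supply: the canonical embedding with its induced surjection on fundamental groups (Corollary~\ref{cor:surj}), the explicit generating set for $\pi_1(\EG(Q))$ (Proposition~\ref{pp:45}), and the nullhomotopy of these generators inside $\Stab(Q)$ (Lemma~\ref{lem:sp}). So the proof is little more than a formal synthesis.

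First I would invoke Corollary~\ref{cor:surj} to fix the canonical embedding $\iota\colon\EG(Q)\hookrightarrow\Stab(Q)$ and obtain the surjection
\[
    \iota_*\colon\pi_1(\EG(Q))\twoheadrightarrow\pi_1(\Stab(Q)).
\]
Since $\iota_*$ is surjective, it suffices to check that $\iota_*$ kills a generating set of $\pi_1(\EG(Q))$. Proposition~\ref{pp:45} furnishes such a generating set explicitly: the classes of the squares and pentagons of the form \eqref{eq:45} arising from pairs of simples in a heart.

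Next I would apply Lemma~\ref{lem:sp}, which asserts precisely that each such square or pentagon, viewed via $\iota$ as a loop in $\Stab(Q)$, is trivial in $\pi_1(\Stab(Q))$. The proof of that lemma already shows more: each such loop can be homotoped into the contractible prism $\mathbf{P}=\kong{C}\cdot\cub(\h)$. Hence $\iota_*$ sends every generator to the identity element, so the surjection $\iota_*$ has trivial image, which forces $\pi_1(\Stab(Q))=1$.

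The genuinely nontrivial work was completed in the preceding lemmas — the combinatorial reduction to squares and pentagons (Proposition~\ref{pp:45}) and, above all, the explicit construction in Lemma~\ref{lem:sp} of degenerating families of central charges along which the two sides of each square or pentagon become homotopic rel endpoints. Consequently there is no further obstacle at the level of Theorem~\ref{thm:sc1} itself: the proof is a one-line composition of Corollary~\ref{cor:surj}, Proposition~\ref{pp:45}, and Lemma~\ref{lem:sp}.
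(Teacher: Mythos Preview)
Your proposal is correct and follows exactly the paper's own argument: combine the surjection of Corollary~\ref{cor:surj} with the generating set from Proposition~\ref{pp:45} and the nullhomotopies from Lemma~\ref{lem:sp} to conclude $\pi_1(\Stab(Q))=1$. The paper's proof is the same one-line synthesis of these three ingredients.
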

\begin{proof}
By Proposition~\ref{pp:45} and Lemma~\ref{lem:sp},
we know that $\pi_1(\EG(Q))$ is trivial in $\Stab(Q)$.
Then the theorem follows from the surjection in Corollary~\ref{cor:surj}.
\end{proof}

\section{Simply connectedness of Calabi-Yau case}\label{sec:sc.CY.Q}
\subsection{The principal component}\label{sec:stab pc}
In this subsection,
we show that $\EGp(\qq{N})$ induces
a connected component in the space of stability conditions $\Stab(\D(\qq{N}))$.

\begin{lemma}\label{lem:cond}
$\EGp_3(\qq{N},\h)$ is finite, for any heart $\h\in\EGp(\qq{N})$.
\end{lemma}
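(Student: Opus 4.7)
The plan is to reduce the finiteness of $\EGp_3(\qq{N},\h)$ for arbitrary $\h\in\EGp(\qq{N})$ to a finiteness statement on the Dynkin side, by combining the Seidel-Thomas braid group action with the canonical L-immersion of Theorem~\ref{thm:Q}.

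First I would normalize $\h$ using the $\Br(\qq{N})$-action. Since $\Br(\qq{N})$ acts by auto-equivalences, each $\Phi\in\Br(\qq{N})$ commutes with $[1]$ and preserves the partial order on hearts, so it restricts to a graph isomorphism $\EGp_3(\qq{N},\h)\xrightarrow{\sim}\EGp_3(\qq{N},\Phi(\h))$; in particular, finiteness of $\EGp_3(\qq{N},\h)$ depends only on the $\Br$-orbit of $\h$. By \eqref{eq:eg} every orbit meets $\EGp_N(\qq{N},\zero)$, and by \eqref{eq:I1} every heart there is of the form $\hua{I}_*(\nh)$ for some $\nh\in\EGp_N(Q,\nzero)$. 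So without loss of generality $\h=\hua{I}_*(\nh)$.

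Next I would argue that $\hua{I}$ induces a bijection between the intervals $[\nh[1],\nh[2]]\subset\EG(Q)$ and $[\h[1],\h[2]]\subset\EG(\qq{N})$. This should mirror the proof of \eqref{eq:I1}: simples of an intermediate heart on the Dynkin side correspond under $\hua{I}$ to simples of the matching intermediate heart on the Calabi-Yau side, so forward simple tilts match up, and by Lemma~\ref{lem:well known} the torsion pairs of $\nh[1]$ correspond to those of $\h[1]$. This reduces the lemma to finiteness of $\EGp_3(Q,\nh)$, which is clear: for Dynkin $Q$, the heart $\nh[1]$ is a finite length abelian category of finite representation type (cf.\ Proposition~\ref{pp:standard}), hence admits only finitely many torsion pairs, and by Lemma~\ref{lem:well known} this enumerates $\EG_3(Q,\nh)$, whose principal component is a fortiori finite.

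The main obstacle will be the middle step. The isomorphism \eqref{eq:I1} of Theorem~\ref{thm:Q}(i) is stated on the fixed base interval $[\nzero[1],\nzero[N-1]]$, whereas $\EGp_3(Q,\nh)$ lives in the shifted interval $[\nh[1],\nh[2]]$ which may extend beyond it. To overcome this one would either re-run the proof of Theorem~\ref{thm:Q}(i) locally at $\nh$, relying on the L-immersion property of $\hua{I}$ to transport simples and torsion pairs faithfully, or realign intervals in advance by composing \eqref{eq:I1} with an appropriate shift or braid element before applying the bijection.
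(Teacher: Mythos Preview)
Your step 1 (normalizing via the $\Br$-action) matches the paper's opening move. The gap is step 2, and neither of your proposed fixes closes it.

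The isomorphism \eqref{eq:I1} works because, for simples of hearts in $\EGp_N(Q,\nzero)$, the relevant $\Hom^k$ vanish for $k\geq N$, so the Calabi-Yau-$N$ correction to $\Hom^\bullet_{\D(\qq{N})}(\hua{I}(-),\hua{I}(-))$ is invisible. When $\nh$ sits near the top of $\EGp_N(Q,\nzero)$, the interval $[\nh[1],\nh[2]]$ extends into degrees $\geq N$ and this vanishing fails; there is then no reason for $\hua{I}_*$ to be defined on such hearts, let alone bijective onto $[\h[1],\h[2]]$. ``Re-running the proof locally'' runs straight into this obstruction. ``Realigning via a braid element'' does not help either: by \eqref{eq:eg} each $\Br$-orbit meets the fundamental domain $\EGp_N(\qq{N},\zero)$ in a single heart, so you cannot move $\h$ lower within it.

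The paper bypasses the problem by a different mechanism. It inducts on $\h$ inside $\EGp_N(\qq{N},\zero)$ with respect to the partial order: if $\h\in\EGp_{N-1}(\qq{N},\zero)$ then $\EGp_3(\qq{N},\h)\subset\EGp_N(\qq{N},\zero)$, which is already finite; otherwise some $S\in\Sim\h$ lies in $\zero[N-1]$, and using the line $l(\h,S)$ together with the identity $\EGp_3(\qq{N},\tilt{\h}{\sharp}{S})=\phi^{-1}_S\EGp_3(\qq{N},\tilt{\h}{(N-2)\flat}{S})$ and \cite[Proposition~9.1]{Q}, one contains $\EGp_3(\qq{N},\h)$ in the union of two intervals already finite by the inductive hypothesis. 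The spherical twist is precisely what folds the overhanging portion of $[\h[1],\h[2]]$ back into the fundamental domain---this is the missing idea in your outline.

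A minor point on step 3: Proposition~\ref{pp:standard} concerns only standard hearts, and $\nh$ need not be standard. The Dynkin-side finiteness follows instead from \eqref{eq:ar} (every indecomposable of $\nh[1]$ lies in some $\nzero[j]$ with $j$ in a bounded range) or directly from Proposition~\ref{pp:lim}.
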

\begin{proof}
If $N=2$, all hearts in $\EGp(\qq{2})$ are equivalent since
any simple tilting is the same to apply a spherical twist.
Moreover, any such a heart has only finitely many torsion pairs
due to \cite[Theorem~4.1]{Mu}, which implies the lemma.

Next, we assume that $N\geq3$.
By \eqref{eq:eg}, we can assume that $\h\in\EGp_N(\qq{N},\zero)$
without lose of generality.
By Theorem~\ref{thm:Q}, we have isomorphism \eqref{eq:I1}
and hence $\EGp_N(\qq{N},\zero)$ is finite by Proposition~\ref{pp:lim}.

Now we claim that,
for $\h\in\EGp_3(\qq{N},\zero)$,
if $\EGp_3(\qq{N},\h_0)$ is finite for any $\nzero\leq\h_0<\h$,
then $\EGp_3(\qq{N},\h)$ is also finite.

If $\h\in\EGp_{N-1}(\qq{N},\zero)$, then
$\EGp_{3}(\qq{N},\h)\subset\EGp_{N}(\qq{N},\zero)$,
which implies that $\EGp_{3}(\qq{N},\h)$ is finite.
Now suppose that $\h\notin\EGp_{N-1}(\qq{N},\zero)$.
Let $\h$ is induced from $\nh\in\EGp_N(Q,\h_Q)$ via $\hua{I}$,
and we have $\nh\notin\EGp_{N-1}(Q,\nzero)$ by \eqref{eq:I1}.
By \eqref{eq:ar}, for any simple $\widehat{S}\in\Sim\nh$,
there is some integer $m$ such that $\widehat{S}\in\nzero[m]$;
and we have $0\leq m\leq N-2$ by \cite[Lemma~5.4]{Q}.
Since $\nh\notin\EGp_{N-2}(Q,\nzero)$,
there exists a simple $\widehat{S}\in\Sim\h$ such that
$\Ho{N-2}(\widehat{S})\neq0$, where $\Ho{\bullet}$ is with respect to $\nzero$.
By \eqref{eq:ar}, $\widehat{S}\in\nzero[N-2]$.
Then $S=\hua{I}(\widehat{S})\in\zero[N-2]$.
By \cite[Lemma~5.4]{Q}, we have
\[
    l(\h,S)\cap\EGp_N(\qq{N}, \zero)=\{\tilt{\h}{i\flat}{S}\}_{i=0}^{N-2}.
\]
By the inductive assumption, we know that
$\EGp_3(\qq{N},\tilt{\h}{\flat}{S})$ and $\EGp_3(\qq{N},\tilt{\h}{(N-2)\flat}{S})$ is finite.
Thus, so is
\[
    \EGp_3(\qq{N},\tilt{\h}{\sharp}{S})
        =\phi^{-1}_S\EGp_3(\qq{N},\tilt{\h}{(N-2)\flat}{S}),
\]
where we use the fact that $\tilt{\h}{\sharp}{S}=\phi^{-1}(\tilt{\h}{(N-2)\flat}{S})$ by \cite[(8.3)]{Q}.
By \cite[Proposition~9.1]{Q}, we have
\[
    \EGp_3(\qq{N},\h)\subset\left( \EGp_3(\qq{N},\tilt{\h}{\flat}{S})
        \cup \EGp_3(\qq{N},\tilt{\h}{\sharp}{S}) \right),
\]
which implies the finiteness of $\EGp_3(\qq{N},\h)$.
Therefore the lemma follows by induction.
\end{proof}

\begin{proposition}\label{pp:EGp=EG}
$\EGp_3(\qq{N},\h)=\EG_3(\qq{N},\h)$, for any heart $\h\in\EGp(\qq{N})$.
\end{proposition}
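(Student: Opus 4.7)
The plan is to prove $\EG_3(\qq{N},\h)=\EGp_3(\qq{N},\h)$ by showing every heart $\h_0\in\EG_3(\qq{N},\h)$ is connected to $\h[1]$ via a path of simple tilts staying within the interval $[\h[1],\h[2]]$. My approach closely parallels the inductive scheme used in the proof of Lemma~\ref{lem:cond}.

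First, using \eqref{eq:eg} together with the fact that the Seidel--Thomas braid group action on $\EG(\D(\qq{N}))$ is by graph automorphisms and preserves shifts (and hence intervals of the form $[\h[1],\h[2]]$), I would reduce to the case $\h\in\EGp_N(\qq{N},\zero)$. I then proceed by the same downward induction as in Lemma~\ref{lem:cond}, with inductive hypothesis $\EG_3(\qq{N},\h')=\EGp_3(\qq{N},\h')$ for every heart $\h'$ satisfying $\zero[1]\leq\h'<\h$.

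For the base case $\h\in\EGp_{N-1}(\qq{N},\zero)$, I have $\EG_3(\qq{N},\h)\subset\EGp_N(\qq{N},\zero)$, and the desired connectivity is inherited from the Dynkin exchange graph $\EG(Q)$ via the isomorphism \eqref{eq:I1}. In the inductive step, when $\h\notin\EGp_{N-1}(\qq{N},\zero)$, I select a simple $S\in\Sim\h$ with $S\in\zero[N-1]$, which exists by \cite[Lemma~5.4]{Q} exactly as in the proof of Lemma~\ref{lem:cond}. Invoking \cite[Proposition~9.1]{Q} yields the covering
\[
\EG_3(\qq{N},\h)\subset\EG_3(\qq{N},\tilt{\h}{\flat}{S})\cup\EG_3(\qq{N},\tilt{\h}{\sharp}{S}),
\]
and by the inductive hypothesis each sub-interval on the right-hand side equals its own principal component, hence is connected.

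The main obstacle is then to transport this connectivity back to $\EG_3(\qq{N},\h)$ itself: a priori a path inside $\EGp_3(\qq{N},\tilt{\h}{\flat}{S})$ connecting $\h_0$ to $\tilt{\h}{\flat}{S}[1]$ may leave the sub-interval $[\h[1],\tilt{\h}{\flat}{S}[2]]\subset[\h[1],\h[2]]$, and similarly for the $\sharp$-side. I would handle this by exploiting the local square and pentagon relations of Lemma~\ref{lem:simple}: any edge of the path whose target exits $[\h[1],\h[2]]$ can be replaced by the remaining edges of the corresponding square or pentagon, which, by a direct examination of the two cases of Lemma~\ref{lem:simple}, yields an equivalent sub-path confined to the interval. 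The finiteness of $\EGp_3(\qq{N},\cdot)$ from Lemma~\ref{lem:cond} guarantees that this rewriting process terminates. Once the paths are confined to $[\h[1],\h[2]]$, a single simple tilt at the shift of $S$ bridges $\tilt{\h}{\flat}{S}[1]$ (respectively $\tilt{\h}{\sharp}{S}[1]$) to $\h[1]$, completing the induction.
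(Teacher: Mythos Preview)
Your approach has a genuine gap and is substantially more complicated than the paper's argument.

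\textbf{The base case is circular.} When $\h\in\EGp_{N-1}(\qq{N},\zero)$ you assert that $\EG_3(\qq{N},\h)\subset\EGp_N(\qq{N},\zero)$ and invoke the isomorphism~\eqref{eq:I1}. But \eqref{eq:I1} only identifies the \emph{principal} components $\EGp_N(Q,\nzero)\cong\EGp_N(\qq{N},\zero)$. A heart $\h_0\in\EG_3(\qq{N},\h)$ is, a priori, an arbitrary heart of $\D(\qq{N})$ satisfying $\h[1]\leq\h_0\leq\h[2]$; nothing yet tells you it lies in $\EGp(\qq{N})$, let alone in the image of $\hua{I}_*$. To conclude $\h_0\in\EGp_N(\qq{N},\zero)$ you would need $\EG_N(\qq{N},\zero)=\EGp_N(\qq{N},\zero)$, which is precisely (a variant of) the statement you are proving. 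The same circularity infects the path-rewriting step: Lemma~\ref{lem:simple} is proved for $\D(Q)$ using Lemma~\ref{lem:lem}, and transferring it to hearts of $\D(\qq{N})$ again requires knowing those hearts are induced.

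\textbf{The path-rewriting is not justified.} Even granting the square/pentagon relations, your claim that edges leaving $[\h[1],\h[2]]$ can be systematically replaced, with termination guaranteed by finiteness of $\EGp_3$, lacks a monovariant. A pentagon move can trade one exiting edge for two others that also exit; finiteness of the vertex set does not by itself force this process to stop.

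\textbf{The paper's argument.} The paper bypasses all of this with a short direct contradiction. Assuming some $\h'\in\EG_3(\qq{N},\h)\setminus\EGp_3(\qq{N},\h)$, one builds an infinite strictly increasing chain $\h[1]=\h_1<\h_2<\cdots$ inside $\EGp_3(\qq{N},\h)$ by repeatedly forward-tilting at a simple $S_j\in\Sim\h_j$ with $S_j\notin\h'$ (such a simple exists whenever $\h_j<\h'$); \cite[Proposition~9.1]{Q} then gives $\h_{j+1}\leq\h'$, and since $\h'\notin\EGp_3(\qq{N},\h)$ the inequality is strict. This contradicts the finiteness of Lemma~\ref{lem:cond}. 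The heart $\h'$ is never assumed to lie in $\EGp(\qq{N})$; it is only used as an unreachable upper bound, so no circularity arises.
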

\begin{proof}
Suppose that there exists a heart $\h'\in\EG_3(\qq{N},\h)-\EGp_3(\qq{N},\h)$,
we claim that there is an infinite directed path
\[
    \h_1 \xrightarrow{S_1} \h_2 \xrightarrow{S_2} \h_3 \to \cdots
\]
in $\EGp_3(\qq{N},\h)$ satisfying $\h_j<\h'$ for any $j\in\kong{N}$.

Use induction starting from $\h_1=\h$.
Suppose we have $\h_j\in\EGp_3(\qq{N},\h)$ such that $\h_j<\h'$.
If for any simple $S\in\h_j$, we have $S\in\h'$,
then $\h'\supset\h_j$ which implies $\hua{P}'\supset\hua{P}_j$, or $\h'\leq\h_j$;
this contradicts to $\h_j<\h'$.
Thus there is a simple $S_j\in\h_j$ such that $S_j\notin\h'$.
Notice that $\h_j<\h'\leq\h[1]\leq\h_j[1]$,
then by \cite[Proposition~9.1]{Q},
we have $\h_{j+1}=\tilt{(\h_j)}{\sharp}{S_j}\leq\h'(\leq \h[1])$.
Notice that $\h'\notin\EGp_3(\qq{N},\h)$, therefore $\h_{j+1}\neq\h'$, which implies the claim.

Then $\EGp_3(\qq{N},\h)$ is infinite,
which contradicts to the finiteness in Lemma~\ref{lem:cond}.
\end{proof}

Now, we can identify the principal component of $\Stab(\D(\qq{N}))$ as follows,
which is the connected component containing $\cub(\zero)$.

\begin{corollary}\label{cor:CYstab}
Let $Q$ be a Dynkin quiver and $N\geq2$.
Then there is a principal component
\[
    \Stap(\qq{N})=\bigcup_{\h\in\EGp(\qq{N})}\cub(\h)
\]
in $\Stab(\D(\qq{N}))$.
Moreover, $\EGp(\qq{N})$ can be canonical embedded in $\Stap(\qq{N})$
as in Theorem~\ref{thm:c.e.s}
with a surjection $\pi_1(\EGp(\qq{N}))\twoheadrightarrow\pi_1(\Stap(\qq{N}))$.
\end{corollary}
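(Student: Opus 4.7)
The plan is to apply Theorem~\ref{thm:c.e.s} with $\EG_0 = \EGp(\qq{N})$; the putative component is then exactly $\Stap(\qq{N}) = \bigcup_{\h \in \EGp(\qq{N})} \cub(\h)$, and the key preliminary step is to check Assumption~\ref{ass}, namely that every heart $\h \in \EGp(\qq{N})$ is finite and carries only finitely many torsion pairs.

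For the number of simples and finite length: Theorem~\ref{thm:Q}.3 gives $\#\Sim\h = n = |Q_0|$ for every $\h \in \EGp(\qq{N})$, and the finite-length property propagates from $\zero$ along each edge of $\EGp(\qq{N})$, since a simple tilt replaces one simple by a shift of it and so preserves the length-finiteness of the heart. For the finiteness of torsion pairs, I would use Lemma~\ref{lem:well known} to identify torsion pairs in $\h$ with the hearts lying between $\h$ and $\h[1]$, equivalently with the vertex set of the interval $\EG_3(\qq{N},\h[-1])$ after shifting. Then Proposition~\ref{pp:EGp=EG} collapses the distinction between $\EG_3$ and $\EGp_3$, so this vertex set is exactly $\EGp_3(\qq{N},\h[-1])$, which is finite by Lemma~\ref{lem:cond}.

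With Assumption~\ref{ass} verified, Theorem~\ref{thm:woolf} yields directly that $\Stap(\qq{N})$ is a connected component of $\Stab(\D(\qq{N}))$: the hearts in $\EGp(\qq{N})$ form a connected subgraph, their cells $\cub(\h)$ glue along common codimension-one faces into a connected open-and-closed subset, and any stability condition in the boundary of some $\cub(\h)$ remains inside the union. Theorem~\ref{thm:c.e.s} then provides, in one stroke, the canonical embedding of $\EGp(\qq{N})$ into $\Stap(\qq{N})$ (unique up to homotopy, and sending each vertex into the interior of the corresponding cell and each edge across the associated codimension-one face) together with the surjection $\pi_1(\EGp(\qq{N})) \twoheadrightarrow \pi_1(\Stap(\qq{N}))$.

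The main obstacle I expect is bookkeeping around the shift: one must check that $\h \in \EGp(\qq{N})$ implies $\h[-1] \in \EGp(\qq{N})$ so that Proposition~\ref{pp:EGp=EG} and Lemma~\ref{lem:cond} apply with input $\h[-1]$, and that the identification of torsion pairs in $\h$ with $\EGp_3(\qq{N},\h[-1])$ is compatible with the intervals used in those statements. Everything else is a direct invocation of Theorem~\ref{thm:woolf} and Theorem~\ref{thm:c.e.s}, whose hypotheses we have reduced to the already-established finiteness results of Section~\ref{sec:sc.CY.Q}.
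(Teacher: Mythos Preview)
Your approach is essentially identical to the paper's: verify Assumption~\ref{ass} for $\EGp(\qq{N})$ using Lemma~\ref{lem:well known}, Lemma~\ref{lem:cond}, and Proposition~\ref{pp:EGp=EG} for finiteness of torsion pairs, then invoke Theorem~\ref{thm:woolf} and Theorem~\ref{thm:c.e.s}.

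The one place where you diverge is the finiteness of hearts. The paper simply cites \cite[Corollary~8.4]{Q}, whereas you argue that ``a simple tilt replaces one simple by a shift of it and so preserves the length-finiteness.'' That description is not accurate: in $\tilt{\h}{\sharp}{S}$ the simple $S$ is replaced by $S[1]$, but the remaining simples $S_j$ are replaced by objects of the form $\psi^\sharp_S(S_j)$, which are genuine extensions when $\Ext^1(S_j,S)\neq 0$ (see \cite[Proposition~5.2]{Q}). The conclusion you want is still true, but it is the content of \cite[Corollary~8.4]{Q} rather than something obvious from the edge description; just cite it. Your bookkeeping worry about $\h[-1]\in\EGp(\qq{N})$ is harmless: $\EGp(\qq{N})$ is the component containing $\zero$, and since $\zero[1]$ is reachable from $\zero$ by simple tilts, the shift preserves this component.
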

\begin{proof}
By Lemma~\ref{lem:well known},
there is a bijection between the set of torsion pairs in $\h$
and the set $\EG_3(\qq{N},\h)$.
Thus, Lemma~\ref{lem:cond} and Proposition~\ref{pp:EGp=EG} imply that
any heart in $\EGp(\qq{N})$ has only finitely many torsion pairs.
Moreover, \cite[Corollary~8.4]{Q} says that any heart in $\EGp(\qq{N})$ is finite.
Thus, Assumption~\ref{ass} holds for $\EGp(\qq{N})$;
and Theorem~\ref{thm:woolf} and Theorem~\ref{thm:c.e.s} gives the theorem.
\end{proof}

Note that the gluing structure of $\Stap(\qq{N})$ is also encoded by
the formula \eqref{eq:boundary}.

\subsection{Simply connectedness}
Define the \emph{basic cycles} in $\Stap(\qq{N})/\Br$ to be
braid group orbits of lines (cf. Definition~\ref{def:line}) in $\Stap(\qq{N})$.

\begin{theorem}\label{thm:sc2}
Suppose that $Q$ is of Dynkin type and let $\h\in\EGp(\qq{N})$.
Then
\[\pi_1(\Stap(\qq{N})/\Br,[\h])\]
is generated by
basic cycles containing the class
$[\h]$ and it is a quotient group of the braid group $\Br_Q$ (cf. \cite[Definition~2.2]{Br-T}).
\end{theorem}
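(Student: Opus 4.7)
The plan is to combine the canonical embedding from Corollary~\ref{cor:CYstab} (descended to the quotient) with a decomposition of $\pi_1(\EGp(\qq{N})/\Br, [\h])$ into basic cycles and standard square/pentagon loops. First, I would observe that the $\Br$ action on $\Stap(\qq{N})$ preserves the cell decomposition and sends $\cub(\h')$ to $\cub(\xi\cdot\h')$ for $\xi\in\Br$, so the embedding $\iota\colon\EG(\qq{N})\hookrightarrow\Stap(\qq{N})$ of Corollary~\ref{cor:CYstab} can be chosen $\Br$-equivariantly (using its uniqueness up to homotopy) and descends to an embedding $\overline{\iota}\colon\EGp(\qq{N})/\Br\hookrightarrow\Stap(\qq{N})/\Br$. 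As in the proof of Theorem~\ref{thm:c.e.s}, the complement of the codimension-$\geq 2$ faces of $\Stap(\qq{N})/\Br$ retracts onto $\overline{\iota}(\EGp(\qq{N})/\Br)$, yielding a surjection $\pi_1(\EGp(\qq{N})/\Br,[\h])\twoheadrightarrow\pi_1(\Stap(\qq{N})/\Br,[\h])$.

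Second, I would identify generators of $\pi_1(\EGp(\qq{N})/\Br,[\h])$. Any loop at $[\h]$ lifts to a path in $\EGp(\qq{N})$ from $\h$ to $\xi\cdot\h$ for some $\xi\in\Br$. By Theorem~\ref{thm:Q}(iv), each of the $n=|\Sim\h|$ lines through $\h$ in $\EGp(\qq{N})$ descends to a basic cycle at $[\h]$ whose associated braid element is precisely $\phi_{S_i}$ for the corresponding simple $S_i\in\Sim\h$; these generate $\Br$ by Theorem~\ref{thm:Q}(iii). Any other loop at $[\h]$ is therefore homotopic in $\EGp(\qq{N})/\Br$ to a product of such basic cycles, corrected by a genuine loop inside $\EGp(\qq{N})$ (one that closes up already in the cover). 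The latter loops are generated by squares and pentagons, via an analogue of Proposition~\ref{pp:45} for $\EGp(\qq{N})$ proved by the same induction on finite intervals $\EGp_N(\qq{N},\zero)$ and transported through the isomorphism \eqref{eq:I1}.

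Third, I would verify that squares and pentagons are null-homotopic in $\Stap(\qq{N})/\Br$ by a local argument mirroring Lemma~\ref{lem:sp}: a square or pentagon based at a heart $\h'$ lies, up to homotopy, in the contractible prism $\kong{C}\cdot\cub(\h')\subset\Stap(\qq{N})$ obtained by rotating and shrinking the phases of the two involved simples $S_i,S_j\in\Sim\h'$ exactly as in Figure~\ref{fig:sc}, and the $\Br$ action permutes these prisms freely so the image in the quotient remains contractible. Consequently $\pi_1(\Stap(\qq{N})/\Br,[\h])$ is generated by the basic cycles through $[\h]$. Finally, sending each Artin generator $\sigma_i\in\Br_Q$ to the basic cycle at $[\h]$ labelled by $S_i$ defines a surjective homomorphism $\Br_Q\twoheadrightarrow\pi_1(\Stap(\qq{N})/\Br,[\h])$; its well-definedness requires checking the commutation and braid relations, which should reduce to the null-homotopy of the squares and pentagons of \eqref{eq:45}, with the two cases matching those of Lemma~\ref{lem:simple}. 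The main obstacle is this last verification: one must identify a concatenation of two or three basic cycles corresponding to a braid relation with a concrete disk in $\Stap(\qq{N})/\Br$, which amounts to carefully tracking how the direction-$S_i$ and direction-$S_j$ lines through $\h$ interleave under the simple tilts of Lemma~\ref{lem:simple} and close up as a square or pentagon after descending to the quotient.
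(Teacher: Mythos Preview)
Your overall strategy matches the paper's: use the surjection from $\pi_1$ of the exchange-graph quotient onto $\pi_1(\Stap(\qq{N})/\Br)$, show squares and pentagons die, and verify that the basic cycles $c_k$ through $[\h]$ both generate and satisfy the braid relations. The paper organises this via the covering-space short exact sequence
\[
0 \to p_*\bigl(\pi_1(\Stap(\qq{N}),\h)\bigr) \to \pi_1(\Stap(\qq{N})/\Br,[\h]) \xrightarrow{\varrho} \Br(\qq{N}) \to 0,
\]
with $\varrho(c_k)=\phi_{S_k}^{-1}$, which is exactly your lifting argument phrased more cleanly.

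The one place where your intuition is off is the braid-relation check. The lift of $c_ic_jc_i^{-1}c_j^{-1}$ (nonadjacent case) or $c_ic_jc_ic_j^{-1}c_i^{-1}c_j^{-1}$ (adjacent case) to $\Stap(\qq{N})$ is \emph{not} a single square or pentagon: each basic cycle has length $N-1$, so the lift is the boundary of an $(N-1)\times(N-1)$ grid of squares, respectively an $(N-1)\times(2N-3)$ grid of squares and pentagons, built by iterated application of Lemma~\ref{lem:simple}. Each small cell is then killed individually by the prism argument of Lemma~\ref{lem:sp}. The same grid structure handles the remaining point you did not address explicitly: showing that a basic cycle $c_T$ through a \emph{neighbouring} class $[\tilt{\h}{\sharp}{S_i}]$ is conjugate into $\langle c_1,\dots,c_n\rangle$ reduces to a partial sub-grid of the same picture. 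So your ``main obstacle'' is real but resolves into many copies of \eqref{eq:45}, not one.
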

\begin{proof}
If $N=2$, $\EGp(\qq{2})/\Br$ consists of a single orbit and the theorem follows from
Theorem~\ref{thm:c.e.s} directly.
Now assume $N\geq3$.

Let $\Sim\h=\{S_1,...S_n\}$, $\phi_{k}=\phi_{S_k}$ and
let $c_k$ be the basic cycle corresponding to $l(\h, S_k)$, for $k=1,...,n$.
Denote by $p$ the quotient map
\[p:\Stap(\qq{N})\to\Stap(\qq{N})/\Br.\]
We will drop $X$ in the notation $\pi_1(X, x)$ if there is no ambiguity.
By \cite[Theorem~13.11]{F}, we have a short exact sequence
\begin{gather}\label{eq:BRses}
    \xymatrix{
    0 \ar[r]& p_*\left(  \pi_1(\h)  \right)
    \ar[r]& \pi_1([\h]) \ar[r]^{\varrho} & \Br(\qq{N}) \ar[r]& 0,
}\end{gather}
where $\varrho$ sends $c_k$ to $\phi^{-1}_{k}$.
To prove the theorem, it is sufficient to show that
$\{c_k\}$ satisfies the braid group relation and generates $\pi_1([\h])$.

First, let $i$ and $j$ be a pair of nonadjacent vertices in $Q$.
Then $\Hom^\bullet(S_i,S_j)=\Hom^\bullet(S_j,S_i)=0$ and
we need to show that $c_i  c_j =c_j  c_i$ in $\pi_1([\h])$.
To do so, consider the lifting $L_1$ of $c_i c_j  c_i^{-1} c_j^{-1}$
in $\pi_1(\h)$ starting at $\h$. Let
\begin{gather*}
    \h^i=\phi^{-1}_i (\h), \quad\h^{ji}=\phi^{-1}_j\circ \phi^{-1}_i (\h),\\
    \h^j=\phi^{-1}_j (\h), \quad\h^{ij}=\phi^{-1}_i\circ \phi^{-1}_j (\h)
\end{gather*}
and we have $\h^{ij}=\h^{ji}$ in this case.
Then $L_1\in\pi_1(\h)$ is the boundary in Figure~\ref{3*3} with clockwise orientation.
By Lemma~\ref{lem:lem} and \cite[Theorem~8.1]{Q}
We know that $\Hom^\bullet(S_j,S_i)$ is concentrated in one degree with dimension at most one.
By the iterated application of \cite[Proposition~5.2]{Q},
$L_1$ is the sum of $(N-1)^2$ squares, each of which is as in \eqref{eq:45}.
For instance, Figure~\ref{3*3} is the CY-$4$ case,
where the blue (resp. red) edges have direction-$S_i$ (resp. direction-$S_j$)
and the hearts are uniquely determined by these edges.
Using the same argument as in Lemma~\ref{lem:sp},
we see that any such square is trivial in $\pi_1(\h)$.
Thus $L_1$ is trivial in $\pi_1(\h)$,
or equivalently, $c_i  c_j =c_j  c_i$ in $\pi_1([\h])$ as required.

\begin{figure}[ht]\centering
\begin{tikzpicture}[scale=1.4]
\draw[fill=gray!14]
    (1,1) rectangle (4,2);
\draw[white, thick]
    (1,1) rectangle (4,2);
\foreach \k in {1,...,4}{
  \foreach \j in {1,...,3}{
    \path (\k,\j+1) node (t) {$\circ$};
    \path (\k,\j) node {$\circ$} edge[->,thick,>=latex,NavyBlue] (t);
    \path (\j+1,\k) node (t) {$\circ$};
    \path (\j,\k) node {$\circ$} edge[->,thick,>=latex,red] (t);
  }
}
\path (1,1) node[below left] {$\h$};\path (1,1) node {$\bullet$};
\path (1,4) node[above left] {$\h^i$};\path (1,4) node {$\bullet$};
\path (4,1) node[below right] {$\h^j$};\path (4,1) node {$\bullet$};
\path (4,4) node[above right] {$\h^{ij}$};\path (4,4) node {$\bullet$};
\end{tikzpicture}
\caption{Square cover of $L_1$, CY-$4$ case}\label{3*3}
\end{figure}
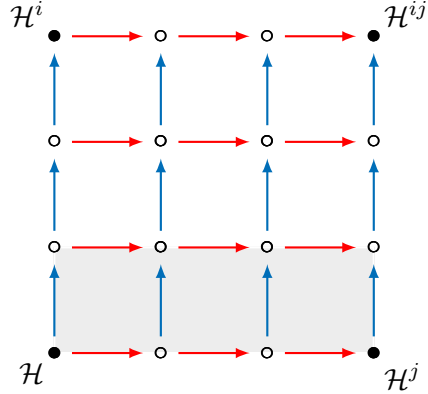

Second, let $i$ and $j$ be a pair of adjacent vertices in $Q$.
Without loss of generality, let the arrow be $j\to i$.
Then $\Hom^\bullet(S_j,S_i)=\k[-1]$ and
we need to show that $c_i  c_j c_i =c_j c_i c_j$ in $\pi_1([\h])$.
To do so, consider the lifting $L_2$ of $c_i  c_j c_i c_j^{-1} c_i^{-1} c_j^{-1}$
in $\pi_1(\h)$ starting at $\h$.
Let $\h^i,\h^j$ as before and
\begin{gather*}
    T=\phi^{-1}_i(S_j),\quad R=\phi^{-1}_j(S_i),\quad
    \h'=\phi^{-1}_j \circ \phi^{-1}_T \circ \phi^{-1}_i(\h).
\end{gather*}
By \cite[Lemma~2.11]{ST}, we have
\[
    \phi^{-1}_j \circ \phi^{-1}_T \circ \phi^{-1}_i
    =\phi^{-1}_i \circ \phi^{-1}_R \circ \phi^{-1}_j.
\]
Then $L_2\in\pi_1(\h)$ is the boundary in Figure~\ref{3*5} with clockwise orientation.
Similarly,
$L_2$ is the sum of $(N-1)(2N-3)$ pentagons/squares,
each of which is as in \eqref{eq:45}.
For instance, Figure~\ref{3*5} is the CY-$4$ case,
where the blue (resp. red, dashed and dotted) edges have
direction-$S_i$ (resp. direction-$S_j$, direction-$T$ and direction-$R$).
Thus $L_2$ is trivial in $\pi_1(\h)$ as above,
or equivalently, $c_i  c_j c_i =c_j c_i c_j$ in $\pi_1([\h])$ as required.

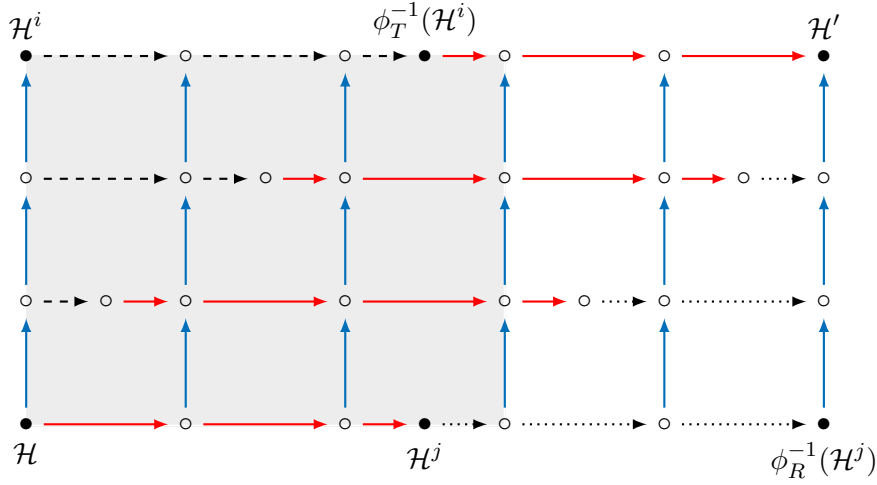
\begin{figure}[ht]\centering
\begin{tikzpicture}[scale=1.5]
\draw[fill=gray!14]
    (0,.86) rectangle (4.2cm,4.14cm);
\draw[white, thick]
    (0,.86) rectangle (4.2cm,4.14cm);
\matrix (m) [matrix of math nodes, row sep=1.2cm, column sep=.6cm] at (3.5,2.5)
    {\bullet&&\circ&&\circ& \bullet &\circ&&\circ&&\bullet\\
    \circ&&\circ& \circ &\circ&&\circ&&\circ& \circ &\circ\\
    \circ& \circ &\circ&&\circ&&\circ& \circ &\circ&&\circ\\
    \bullet&&\circ&&\circ& \bullet &\circ&&\circ&&\bullet\\};
\path (m-1-1) \es (m-1-3);\path (m-1-3) \es (m-1-5);\path (m-1-5) \es (m-1-6);
\path (m-1-6) \ee (m-1-7);\path (m-1-7) \ee (m-1-9);\path (m-1-9) \ee (m-1-11);
\path (m-2-1) \es (m-2-3);\path (m-2-3) \es (m-2-4);\path (m-2-4) \ee (m-2-5);
\path (m-2-5) \ee (m-2-7);\path (m-2-7) \ee (m-2-9);\path (m-2-9) \ee (m-2-10);
\path (m-2-10) \eo (m-2-11);\path (m-3-1) \es (m-3-2);
\path (m-3-2) \ee (m-3-3);\path (m-3-3) \ee (m-3-5);\path (m-3-5) \ee (m-3-7);
\path (m-3-7) \ee (m-3-8);\path (m-3-8) \eo (m-3-9);\path (m-3-9) \eo (m-3-11);
\path (m-4-1) \ee (m-4-3);\path (m-4-3) \ee (m-4-5);\path (m-4-5) \ee (m-4-6);
\path (m-4-6) \eo (m-4-7);\path (m-4-7) \eo (m-4-9);\path (m-4-9) \eo (m-4-11);
\foreach \k in {1,3,5,7,9,11}{
    \path (m-4-\k) \eb (m-3-\k);\path (m-3-\k) \eb (m-2-\k);\path (m-2-\k) \eb (m-1-\k);
}
\path (0,1-.2) node[below] {$\h$};
\path (3.5,1-.2) node[below] {$\h^j$};
\path (7,1-.2) node[below] {$\phi^{-1}_R(\h^j)$};
\path (0,4.2) node[above] {$\h^i$};
\path (3.5,4.2) node[above] {$\phi^{-1}_T(\h^i)$};
\path (7,4.2) node[above] {$\h'$};
\end{tikzpicture}
\caption{Square and pentagon cover of $L_2$, CY-$4$ case}\label{3*5}
\end{figure}

Therefore, we have shown that $\{c_k\}$ satisfies the braid group relation.
To finish, we only need to show that $\{c_k\}$ generates $\pi_1([\h])$.
By Theorem~\ref{thm:Q},
we have $\EGp(\qq{N})/\Br\cong\cc{\EGp_N}(\qq{N},\zero)$
and hence $\pi_1(\EGp(\qq{N})/\Br)$ is generated by
all squares and pentagons in $\EGp_N(\qq{N},\zero)$ and all basic cycles (cf. \cite[\S~5.3]{Q}).
Using again the argument of Lemma~\ref{lem:simple},
these squares and pentagons are trivial as in Lemma~\ref{lem:sp}.
Then by induction, what is left to show is that
any other basic cycle that does not contain $[\h]$ is generated by $\{c_k\}$.
Consider the basic cycle $s_i c_T s_i^{-1}$ for demonstration,
where $s_i$ is the path from $\h$ to $\tilt{\h}{\sharp}{S_i}$, $T=\phi^{-1}_i(S_j)$
and $c_T$ be the basic cycle induced by the line $l(\tilt{\h}{\sharp}{S_i}, T)$.

In the ($i$ and $j$) nonadjacent case,
let $L_3$ be the lifting of $(s_i c_T   s_i^{-1}) c_i^{-1}$
in $\pi_1(\h)$ starting at $\h$.
Then $L_3$ is the sum of the gray squares in Figure~\ref{3*3}
(which is a partial sum of $L_1$).
As above, $L_3$ is trivial in $\pi_1(\h)$, or equivalently, $s_i c_T s_i^{-1}= c_i$.
In the adjacent case,
let $L_4$ be the lifting of
$c_j (s_i c_T   s_i^{-1})   c_j^{-1}  c_i^{-1} \in \pi_1([\h])$
in $\pi_1(\h)$ starting at $\h$.
Then $L_4$ is the sum of the gray squares in Figure~\ref{3*5}
(which is a partial sum of $L_2$).
As above, $L_4$ is trivial in $\pi_1(\h)$,
or equivalently, $s_i c_T   s_i^{-1}= c_j^{-1} c_i  c_j$.
Either way, $s_i c_T s_i^{-1}$ is generated by $\{c_k\}_{k=1}^n$, as required.
\end{proof}

\begin{corollary}\label{cor:conn}
Let $Q$ be a Dynkin quiver.
If the braid group action on $\D(\qq{N})$ is faithful,
i.e. $\Br(\qq{N})\cong\Br_Q$,
then $\Stap(\qq{N})$ is simply connected.
In particular, this is true for $Q$ of type $A_n$ or $N=2$.
\end{corollary}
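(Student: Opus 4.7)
The plan is to extract the result from the short exact sequence \eqref{eq:BRses} together with Theorem~\ref{thm:sc2}, by showing that the faithfulness hypothesis forces the kernel term $p_*\bigl(\pi_1(\Stap(\qq{N}),\h)\bigr)$ to vanish. Fix a heart $\h\in\EGp(\qq{N})$ with $\Sim\h=\{S_1,\ldots,S_n\}$, and write $c_k$ for the basic cycle at $[\h]$ associated to the line $l(\h,S_k)$, as in the proof of Theorem~\ref{thm:sc2}.

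First, I would assemble three pieces of information about $\pi_1(\Stap(\qq{N})/\Br,[\h])$. By Theorem~\ref{thm:sc2} this group is generated by the classes $\{c_k\}_{k=1}^n$ and the $c_k$ satisfy the braid relations of $\Br_Q$, so there is a canonical surjection
\[
    \alpha\colon \Br_Q \twoheadrightarrow \pi_1(\Stap(\qq{N})/\Br,[\h]),\qquad \sigma_k\mapsto c_k.
\]
From \eqref{eq:BRses} there is also a surjection
\[
    \varrho\colon \pi_1(\Stap(\qq{N})/\Br,[\h]) \twoheadrightarrow \Br(\qq{N}),\qquad c_k\mapsto \phi_k^{-1},
\]
whose kernel is precisely $p_*\bigl(\pi_1(\Stap(\qq{N}),\h)\bigr)$. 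By Corollary~\ref{cor:CYstab}, the embedding of $\EG(\qq{N})$ into $\Stap(\qq{N})$ provided by Theorem~\ref{thm:c.e.s} turns $p$ into a (covering-type) quotient, so $p_*$ is injective; hence $p_*\bigl(\pi_1(\Stap(\qq{N}),\h)\bigr)\cong \pi_1(\Stap(\qq{N}),\h)$ and showing $\ker\varrho=0$ is equivalent to simple connectedness of $\Stap(\qq{N})$.

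Now I would use the faithfulness hypothesis. Assuming $\Br(\qq{N})\cong\Br_Q$ via $\phi_k\leftrightarrow\sigma_k$, the composition $\varrho\circ\alpha\colon \Br_Q\to\Br_Q$ sends $\sigma_k$ to $\sigma_k^{-1}$, which is an automorphism of $\Br_Q$ (given by the standard anti-involution). In particular $\varrho\circ\alpha$ is an isomorphism, which forces both $\alpha$ and $\varrho$ to be isomorphisms. Consequently $\ker\varrho=0$, and combining with injectivity of $p_*$ gives $\pi_1(\Stap(\qq{N}),\h)=0$, proving the first assertion.

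For the final claim, I would simply quote the known results: faithfulness of the Seidel--Thomas action was established in \cite{ST} for $Q$ of type $A_n$ (any $N\geq 2$), and holds in the $N=2$ case for all Dynkin $Q$ by results on spherical twists in Calabi--Yau-$2$ categories (attributed in the acknowledgements to Brav). The only mildly delicate point in the whole argument is verifying that $p_*$ is injective; I would do this either by noting that the proof of Theorem~\ref{thm:sc2} realizes $\Stap(\qq{N})\to\Stap(\qq{N})/\Br$ as a covering (the $\Br(\qq{N})$-action is free and properly discontinuous on $\Stap(\qq{N})$, since any element fixing a stability condition must fix the heart and its simples) or directly from the exactness in \eqref{eq:BRses}, which implicitly uses this covering property via \cite[Theorem~13.11]{F}.
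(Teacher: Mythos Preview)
Your argument is correct and follows essentially the same route as the paper: use Theorem~\ref{thm:sc2} to obtain a surjection $\Br_Q\twoheadrightarrow\pi_1(\Stap(\qq{N})/\Br,[\h])$, compose with $\varrho$ from \eqref{eq:BRses}, observe that the composite $\sigma_k\mapsto\sigma_k^{-1}$ is an automorphism of $\Br_Q$ under the faithfulness hypothesis, and conclude $\ker\varrho=0$. The paper compresses all of this into a single sentence; your expansion of the logic (in particular making explicit why $p_*$ is injective, via the covering property underlying \cite[Theorem~13.11]{F}) is accurate and helpful. One small correction: the $N=2$ faithfulness is cited as \cite{Br-T} (Brav--Thomas), not merely from the acknowledgements.
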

\begin{proof}
If $\Br(\qq{N})\cong\Br_Q$,
then $\varrho$ in \eqref{eq:BRses} is an isomorphism
by the second part of Theorem~\ref{thm:sc2}.
Hence $\pi_1(\Stap(\qq{N}))=1$, which implies the simply connectedness.
The faithfulness for $Q$ of type $A_n$ is shown in \cite{ST}
and faithfulness for $N=2$ is shown in \cite{Br-T}.
\end{proof}

\begin{remark}\label{rem:stab}
By Theorem~\ref{thm:sc2},
basic cycles in $\Stap(\qq{N})/\Br$ are the generators of its fundamental group,
which provide a topological realization of almost completed cluster tilting objects
(cf. \cite[Remark~8.9]{Q}).
In fact, our philosophy is that $\Stap(\qq{N})/\Br$
is the `complexification' of the dual of cluster complex
and provides the `right' space of stability conditions for the higher cluster category
\[
    \C{N-1}{Q}=\hua{D}(Q)/\shift{N-1},
\]
where $\shift{N-1}=\tau^{-1}\circ[N-2]\in\Aut\D(Q)$.
Notice that there are no hearts in $\C{N-1}{Q}$ and
thus the space of stability conditions $\Stab(\C{N-1}{Q})$ is empty in the usual sense.
\end{remark}

Here are two sensible conjectures.

\begin{conjecture}
For any acyclic quiver $Q$, $\Br(\qq{N})\cong\Br_Q$.
\end{conjecture}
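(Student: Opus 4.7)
My plan begins by observing that Theorem~\ref{thm:sc2} combined with Theorem~\ref{thm:Q}(iii) already yields a surjective group homomorphism $\pi\colon \Br_Q \twoheadrightarrow \Br(\qq{N})$: the Artin generators of $\Br_Q$ are sent to the twist functors $\phi_{S_e}$, which generate $\Br(\qq{N})$ and satisfy all the braid relations by the square/pentagon arguments depicted in Figures~\ref{3*3} and~\ref{3*5}. The entire content of the conjecture is thus the injectivity of $\pi$, i.e.\ that no nontrivial reduced word in the $\phi_{S_e}$ is isomorphic to the identity functor on $\D(\qq{N})$.

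My strategy is to combine an inductive reduction along parabolic subgroups with a geometric rigidity argument. For a full subquiver $Q' \subset Q$, the natural inclusion $\D(\Gamma_N Q') \hookrightarrow \D(\qq{N})$ intertwines the corresponding twist functors, producing a commutative square
\[
\xymatrix{
\Br_{Q'} \ar@{->>}[r]^{\pi'} \ar@{^{(}->}[d] & \Br(\Gamma_N Q') \ar[d] \\
\Br_Q \ar@{->>}[r]^{\pi} & \Br(\qq{N})
}
\]
in which the left vertical map is the Artin parabolic inclusion. By induction on $|Q_0|$ the map $\pi'$ is an isomorphism, so $\ker\pi$ meets every proper parabolic trivially. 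Together with the base cases $|Q_0|=1$ (free cyclic) and $|Q_0|=2$ (disjoint, or type $A_2$ covered by \cite{ST}), this reduces the conjecture to showing that any nontrivial $\beta \in \ker\pi$ must have \emph{full support} in the sense that it lies in no proper parabolic subgroup.

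For the full-support case, I would lift such a $\beta$ to a path in $\Stap(\qq{N})$ starting at a stability condition $\sigma \in \cub(\zero)$ and ending at $\pi(\beta)\cdot\sigma$. Because $\pi(\beta)=1$ this lift is a loop, and its image in $\Stap(\qq{N})/\Br$ represents a class in $\pi_1(\Stap(\qq{N})/\Br,[\zero])$ mapping to $\beta$ under $\varrho$ from \eqref{eq:BRses}. I would then homotope it into the embedded exchange graph $\EGp(\qq{N})/\Br$ via Corollary~\ref{cor:CYstab}, decompose it into basic cycles together with the local squares and pentagons supplied by an $\EG(\qq{N})$-analogue of Proposition~\ref{pp:45}, and use that each such local cycle is already trivial in $\Br_Q$ once the braid relations are imposed—precisely the bookkeeping performed in the proof of Theorem~\ref{thm:sc2}, now run in reverse to conclude $\beta=1$ in $\Br_Q$.

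The hard part will be closing this argument without invoking the simply connectedness of $\Stap(\qq{N})$, since by Corollary~\ref{cor:conn} that statement presupposes the very faithfulness we are trying to prove. To break the circularity I expect one must supply an independent geometric input: either a complexified hyperplane-complement model of $\Stap(\qq{N})/\Br$ whose fundamental group is \emph{a priori} $\Br_Q$ (in the spirit of the Bridgeland--Smith quadratic-differential description in the surface case), or a limit argument exploiting \eqref{eq:lim eg} that propagates faithfulness from the $N=2$ base case of \cite{Br-T} to arbitrary $N$ through categorified coverings relating the Calabi--Yau-$N$ and Calabi--Yau-$(N{+}1)$ Ginzburg algebras. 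Either route demands genuinely new input beyond the square/pentagon combinatorics developed in the present paper.
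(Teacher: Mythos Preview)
The statement you are attempting to prove is labelled a \emph{Conjecture} in the paper, not a theorem or proposition; the paper provides no proof whatsoever, listing it alongside the contractibility conjecture as one of ``two sensible conjectures'' left open. There is therefore nothing to compare your proposal against.

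Your proposal is not a proof but an outline of a possible strategy, and you yourself correctly identify its fatal gap: the reverse-engineering of Theorem~\ref{thm:sc2} you describe would require knowing that every loop in $\Stap(\qq{N})$ is null-homotopic, which by Corollary~\ref{cor:conn} is \emph{equivalent} to the faithfulness you are trying to establish. Your final paragraph acknowledges this circularity and gestures at two escape routes (a hyperplane-complement model, or propagation from the $N=2$ case of \cite{Br-T}), but neither is carried out, and both would indeed constitute genuinely new mathematics beyond this paper. The parabolic-reduction step is also less powerful than you suggest: showing that $\ker\pi$ avoids proper parabolics does not by itself force $\ker\pi=1$, since Artin groups of non-Dynkin type can in principle have normal subgroups of full support. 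In short, the conjecture remains open and your proposal does not close it.
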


\begin{conjecture}
For a Dynkin quiver $Q$,
$\Stab(\D(Q))$ and $\Stap(\D(\qq{N}))$ are contractible.
\end{conjecture}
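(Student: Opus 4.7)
The plan is to upgrade the simple connectedness established in Theorem~\ref{thm:sc1} and Corollary~\ref{cor:conn} to full contractibility by exploiting the cell decomposition
\[
    \Stab_0=\bigcup_{\h\in\EG_0}\cub(\h)
\]
together with its gluing structure \eqref{eq:boundary}. The starting observation is that every cell $\cub(\h)\cong H^n$ and its closure $\cc{\cub(\h)}$ are contractible, so by the nerve theorem the homotopy type of $\Stab_0$ is encoded, up to verifying that intersections of closed cells remain contractible, by the combinatorics of $\EG_0$. Thus contractibility becomes a combinatorial-topological statement about the exchange graph together with its higher cells recording coincidences of faces.

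For $\Stab(\D(Q))$, I would exhaust $\Stab(Q)$ by the nested open subspaces
\[
    U_N=\bigcup_{\h\in\EG_{2N}(Q,\nzero[-N])}\cub(\h),
\]
arising from the limit formula $\EG(Q)=\lim_{N\to\infty}\EG_{2N}(Q,\nzero[-N])$ of Proposition~\ref{pp:lim}. Each $U_N$ is a finite union of contractible closed cells by Assumption~\ref{ass} and the local finiteness of \eqref{eq:boundary}, so a Mayer--Vietoris/nerve argument reduces contractibility of $U_N$ to combinatorial contractibility of the associated nerve. Then contractibility of $\Stab(Q)=\varinjlim U_N$ would follow by the standard homotopy colimit argument, provided each inclusion $U_N\hookrightarrow U_{N+1}$ is null-homotopic on every sphere. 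The base case of this spherical filling is exactly Lemma~\ref{lem:sp} combined with Proposition~\ref{pp:45}: every loop in $\EG(Q)$ is filled by squares and pentagons that already sit in a contractible prism $\kong{C}\cdot\cub(\h)\cong\kong{C}\cdot H^n$.

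For $\Stap(\D(\qq{N}))$ I would run a parallel argument routed through the braid group action. By Corollary~\ref{cor:CYstab} the cell decomposition and limit exhaustion apply verbatim; by Theorem~\ref{thm:Q} the quotient $\EGp(\qq{N})/\Br\cong\EGp_N(\qq{N},\zero)$ is finite, so the orbit space $\Stap(\qq{N})/\Br$ is a finite CW-complex whose fundamental group is described by Theorem~\ref{thm:sc2}. I would first show $\Stap(\qq{N})/\Br$ is a $K(\Br_Q,1)$ by combining the cellular exhaustion above with the braid presentation of $\pi_1$ supplied by Theorem~\ref{thm:sc2}, and then invoke faithfulness of the Seidel--Thomas action (Corollary~\ref{cor:conn}) to identify $\Stap(\qq{N})\to\Stap(\qq{N})/\Br$ with the universal cover, which is therefore contractible. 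The heuristic of Remark~\ref{rem:stab}, identifying $\Stap(\qq{N})/\Br$ with the complexification of the dual cluster complex, is consistent with the known contractibility of the generalized associahedron in type $A$ and motivates asphericity in general.

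\textbf{Main obstacle.} The two-dimensional content---filling every loop in $\EG_0$ by squares and pentagons inside a local contractible prism---is precisely Proposition~\ref{pp:45} plus Lemma~\ref{lem:sp}, and this essentially exhausts the techniques of the present paper. The missing ingredient is a genuinely higher-dimensional analogue: every embedded $k$-sphere in $\EG_0$ must bound a $(k+1)$-disk assembled from elementary relations between flips, equivalently, all intersections of closed cells in the cover must be contractible. The natural framework is to exhibit $\EG_0$ as the $1$-skeleton of a CAT(0) cube or polytopal complex whose higher faces record the higher coherence relations among simple tilts---plausibly the generalized associahedron for the Calabi--Yau case and its ``shifted'' analogue for $\D(Q)$---so that contractibility is automatic from nonpositive curvature. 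Carrying this out uniformly for all Dynkin types, in particular exceptional $E_{6,7,8}$ where no classical polytopal model is available, and controlling the passage to the limit $N\to\infty$ in \eqref{eq:philo}, is the principal technical challenge.
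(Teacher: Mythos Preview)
The statement you are attempting to prove is stated in the paper as a \emph{conjecture}, not a theorem; the paper offers no proof and does not claim one. What the paper actually establishes is simple connectedness (Theorem~\ref{thm:sc1} for $\Stab(Q)$, and Corollary~\ref{cor:conn} for $\Stap(\qq{N})$ conditionally on faithfulness of the braid action). Your write-up is explicitly a \emph{plan} with an acknowledged obstacle, and that obstacle is exactly why the statement remains a conjecture.

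To be concrete about the gap: the nerve/Mayer--Vietoris strategy requires that every finite intersection $\cc{\cub(\h_1)}\cap\cdots\cap\cc{\cub(\h_k)}$ be empty or contractible. The paper's description of the gluing, formula~\eqref{eq:boundary} together with Theorem~\ref{thm:woolf}, only identifies the \emph{codimension-one} faces of a single cell; it says nothing about higher-codimension faces or about multiple intersections, and there is no result in the paper guaranteeing these are contractible. The argument of Lemma~\ref{lem:sp} is genuinely two-dimensional: it fills a square or pentagon inside a prism $\kong{C}\cdot\cub(\h)$, but gives no mechanism for filling a $2$-sphere, let alone higher spheres. Likewise, your proposed route for the Calabi--Yau case---show $\Stap(\qq{N})/\Br$ is a $K(\Br_Q,1)$ and then pass to the universal cover---is circular as stated: proving the quotient is aspherical is equivalent to proving the cover is contractible, which is the conjecture itself. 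Theorem~\ref{thm:sc2} only computes $\pi_1$ of the quotient; it does not touch the higher homotopy groups.

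A minor technical point: the sets $U_N$ you define as unions of half-open cells $\cub(\h)$ are not obviously open in $\Stab(Q)$; one would need to take interiors of unions of \emph{closures}, as the paper does in the proof of Theorem~\ref{thm:limit}. This is fixable but worth flagging. Your instinct that the right framework is a CAT(0) or polytopal complex whose $1$-skeleton is $\EG_0$ is reasonable and aligns with the paper's philosophy (cf.\ Remark~\ref{rem:stab}), but establishing that structure is itself the open problem.
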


\section{A limit formula}\label{sec:sc limit}
In this section, we provide a limit formula for spaces of stability conditions.

\begin{lemma}\label{lem:induce}
If $\h=\hua{L}_*(\nh)$ for some heart $\nh\in\EG(Q)$,
then a stability condition $\widehat{\sigma}=(\widehat{Z},\widehat{\hua{P}})$ on $\D(Q)$
with heart $\nh$ canonically induces a stability condition
$\sigma=(Z,\hua{P})$ with heart $\h$
and such that $Z(\hua{L}(\widehat{S}))=\widehat{Z}(\widehat{S})$
for any $\widehat{S}\in\Sim\nh$.
Thus we have a homeomorphism $\hua{L}_*:\cub(\nh)\to\cub(\h)$.
\end{lemma}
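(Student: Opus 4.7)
The plan is to reduce everything to the criterion of Proposition~\ref{pp:ss}, which says that once the heart is fixed and finite, a stability condition is nothing more than a function $\Sim\h\to H$ into the upper half-plane. The construction of $\sigma$ from $\widehat{\sigma}$ is then forced by the prescribed compatibility $Z(\hua{L}(\widehat{S}))=\widehat{Z}(\widehat{S})$, and the main work is to check that this really defines a point of $\cub(\h)$ and that the resulting map is continuous both ways.

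First I would use the hypothesis that $\h=\hua{L}_*(\nh)$, which by definition means the L-immersion $\hua{L}$ sends $\Sim\nh=\{\widehat{S}_1,\dots,\widehat{S}_n\}$ bijectively onto $\Sim\h=\{S_1,\dots,S_n\}$ with $\hua{L}(\widehat{S}_i)=S_i$. Since $\nh\in\EGp(Q)$ and $\h\in\EGp(\qq{N})$, both hearts are finite: for $\nh$ this is classical for Dynkin (or acyclic) $Q$, and for $\h$ it is the content of \cite[Corollary~8.4]{Q} as used in the proof of Corollary~\ref{cor:CYstab}. Thus Proposition~\ref{pp:ss} applies on both sides and identifies $\cub(\nh)\cong H^n$ and $\cub(\h)\cong H^n$ via evaluation on simples.

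Next I would define the candidate central charge of $\sigma$ simply by declaring $Z(S_i):=\widehat{Z}(\widehat{S}_i)\in H$ and extending $\kong{Z}$-linearly to the whole Grothendieck group $\K(\D(\qq{N}))$; this is legitimate because the simples of a finite heart freely generate $\K(\h)$, and the value of a central charge on any object of $\D(\qq{N})$ is determined by its class in $\K(\D(\qq{N}))$. Feeding this function $\Sim\h\to H$ into Proposition~\ref{pp:ss}, together with the finite heart $\h$, yields a bona fide stability condition $\sigma=(Z,\hua{P})$ on $\D(\qq{N})$ whose heart is $\h$ and which satisfies the required compatibility; the HN-property is automatic because every object of $\h$ has a finite filtration by simples.

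Finally I would package this as a map $\hua{L}_*\colon\cub(\nh)\to\cub(\h)$, $\widehat{\sigma}\mapsto\sigma$. Under the coordinates supplied by Proposition~\ref{pp:ss}, both $\cub(\nh)$ and $\cub(\h)$ are identified with $H^n$ by the evaluation maps $\widehat{\sigma}\mapsto(\widehat{Z}(\widehat{S}_i))_i$ and $\sigma\mapsto(Z(S_i))_i$, and under these identifications $\hua{L}_*$ is literally the identity of $H^n$; in particular it is a homeomorphism, with inverse constructed by reversing the roles of $\widehat{Z}$ and $Z$. The only point that requires any care is that the construction of $\sigma$ genuinely lands in $\cub(\h)$ (as opposed to some boundary or a different cell), but this is guaranteed by the finiteness of $\h$ together with the fact that each $Z(S_i)=\widehat{Z}(\widehat{S}_i)$ lies in the open upper half-plane $H$; no obstacle beyond bookkeeping arises.
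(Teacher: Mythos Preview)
Your proof is correct and follows essentially the same strategy as the paper: both hearts are finite (the paper cites \cite[Theorem~5.7 and Corollary~8.4]{Q}), so a stability condition with either heart is determined by the values of the central charge on simples, and the bijection $\Sim\nh\to\Sim\h$ induced by $\hua{L}$ yields the desired homeomorphism. The paper phrases this via the commutative square with the forgetful maps $\cub(\nh)\to\Hom(\K(\D(Q)),\kong{C})$ and $\cub(\h)\to\Hom(\K(\D(\qq{N})),\kong{C})$ from Theorem~\ref{thm:B1}, linked by the isomorphism of Grothendieck groups, whereas you invoke Proposition~\ref{pp:ss} directly to identify each cell with $H^n$; these are two ways of saying the same thing.
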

\begin{proof}
The heart $\nh$ and $\h$ are both finite by \cite[Theorem~5.7 and Corollary~8.4]{Q}.
Then we have the following diagram
\begin{gather}\label{eq:cdd}
\xymatrix{
    \cub(\nh)  \ar@{.}[r]\ar[d] & \cub(\h)  \ar[d] \\
    \Hom(\K(\D(Q)), \kong{C}) \ar@{-}[r]^{\cong} &  \Hom(\K(\D(\qq{N})), \kong{C}),
}\end{gather}
where the vertical homeomorphisms are given by the second part of Theorem~\ref{thm:B1}
and the horizontal one is induced by the isomorphism between the
corresponding Grothendieck groups.
Therefore we have the required the homeomorphism $\hua{L}_*$ by composing the maps
in \eqref{eq:cdd}.
\end{proof}

\begin{theorem}
\label{thm:limit}
We have
\[\Stab(Q) \cong \lim_{N\to\infty} \Stap(\qq{N})/\Br(\qq{N})\]
in the following sense:
\numbers
\item
    There exists a family of open subspaces $\{\hua{S}_N\}_{N\geq2}$ in $\Stap(Q)$
    satisfying $\hua{S}_N\subset\hua{S}_{N+1}$ and
    $\Stab(Q) \cong \lim_{N\to\infty} \hua{S}_N$.
\item
    $\hua{S}_N$ is homeomorphic to a fundamental domain for $\Stap(\qq{N})/\Br$.
\ends
\end{theorem}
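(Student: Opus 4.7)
The plan is to build each $\hua{S}_N$ as an open neighbourhood, inside $\Stab(Q)$, of a finite collection of cells indexed by a growing window of the exchange graph, and then to identify this neighbourhood cell-by-cell with a fundamental domain for $\Br$ acting on $\Stap(\qq{N})$. The two non-trivial ingredients are Proposition~\ref{pp:lim}, which tells us that the windows exhaust $\EG(Q)$, and the combination of Theorem~\ref{thm:Q} with Lemma~\ref{lem:induce}, which upgrades the combinatorial isomorphism $\hua{I}_*$ of hearts to a homeomorphism of the corresponding cells.

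First I would pick a shift $k_N$ growing linearly with $N$, say $k_N=\lfloor N/2\rfloor$, so that the windows $\EG_{N}(Q,\nzero[-k_N])$ are nested and exhaust $\EG(Q)$; this is Proposition~\ref{pp:lim} after a trivial reindexing. Define $\hua{S}_N$ to be the interior, in $\Stab(Q)$, of $\bigcup_{\h}\cc{\cub(\h)}$ with the union running over hearts in this window. The boundary description \eqref{eq:boundary} together with local finiteness of the cell decomposition guarantee that $\hua{S}_N$ is open, and $\hua{S}_N\subset\hua{S}_{N+1}$ is immediate from the nested windows. For the identification $\Stab(Q)\cong\lim_{N\to\infty}\hua{S}_N$, I would observe that for any $\sigma\in\cub(\h)$ the heart $\h$ and each heart sharing a codimension-one face with $\cub(\h)$ lie in a common finite window, once more by Proposition~\ref{pp:lim}, so $\sigma$ lies in the interior of $\hua{S}_N$ for $N$ sufficiently large, which gives $(1^\circ)$.

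For $(2^\circ)$, Theorem~\ref{thm:Q} supplies the isomorphism $\hua{I}_*\colon\EGp_N(Q,\nzero)\to\EGp_N(\qq{N},\zero)$ and the identification $\EGp_N(\qq{N},\zero)\cong\EGp(\qq{N})/\Br$ as vertex sets, so that $\bigcup_{\nh\in\EGp_N(\qq{N},\zero)}\cc{\cub(\nh)}$, glued along its codimension-one faces as in \eqref{eq:boundary}, is a fundamental domain for $\Stap(\qq{N})/\Br$. After matching the windows by applying the shift $[-k_N]$, Lemma~\ref{lem:induce} provides a homeomorphism $\cub(\h)\cong\cub(\hua{I}_*(\h))$ on each individual cell. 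I would then glue these cell-wise homeomorphisms along their common codimension-one faces to obtain the required global homeomorphism between $\hua{S}_N$ and the chosen fundamental domain.

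The main obstacle is the consistency of this gluing, i.e.\ showing that the homeomorphism attached to $\cub(\h)$ and the one attached to $\cub(\tilt{\h}{\sharp}{S})$ agree along their shared face. By Theorem~\ref{thm:woolf} this face is detected purely by the condition that $Z(S)$ becomes real while the remaining simple charges stay in $H$, and Lemma~\ref{lem:induce} is set up precisely so that the identification of central charges $Z(\hua{I}(\widehat{S}))=\widehat{Z}(\widehat{S})$ survives to the boundary. Hence the two local homeomorphisms, restricted to the common face, are both induced by the same central-charge identification and therefore coincide. Once this compatibility is recorded, the local data assemble to the stated homeomorphism, completing the proof.
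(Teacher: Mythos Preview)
Your proposal is correct and follows essentially the same route as the paper: define $\hua{S}_N$ as the interior of the union of closed cells over an exhausting interval of $\EG(Q)$, invoke Proposition~\ref{pp:lim} for $(1^\circ)$, and for $(2^\circ)$ glue the cell-wise homeomorphisms of Lemma~\ref{lem:induce} along faces using Theorem~\ref{thm:Q} and the boundary description \eqref{eq:boundary}. The only cosmetic difference is that you shift the window on the exchange-graph side via $[-k_N]$, whereas the paper keeps the window at $\nzero$ and shifts afterwards by the natural $\kong{C}$-action on $\Stab(Q)$; your explicit verification of face-compatibility via the central-charge identification is, if anything, slightly more detailed than the paper's one-line computation $\hua{I}_*(F_Q)=F_\Gamma$.
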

\begin{proof}
Let $\Stap_N(Q)$ and $\Stap_N(\qq{N})$ be the interior of
\begin{gather*}
    \bigcup_{\h\in\EGp_N(Q,\nzero)}\cc{\cub(\h)}
    \quad\text{and}\quad
    \bigcup_{\h\in\EGp_N(\qq{N},\zero)}\cc{\cub(\h)}
\end{gather*}
respectively.
By \eqref{eq:boundary},
we know that a face $F_Q$ of some cell $\cub(\nh)$ is in $\Stap_N(Q)$
if and only if \[F_Q=\cc{\cub(\nh)}\cap\cub(\nh')\]
for some $\nh,\nh'\in\EGp_N(Q,\nzero)$ satisfying $\nh[-1]\leq\nh'<\nh$.
Similarly, a face $F_\Gamma$ of some cell $\cub(\h)$ is in $\Stap_N(\qq{N})$
if and only if \[F_\Gamma=\cc{\cub(\h)}\cap\cub(\h')\]
for some $\h,\h'\in\EGp_N(\qq{N},\zero)$ satisfying $\h[-1]\leq\h'<\h$.
By Lemma~\ref{lem:induce},
we know that any such face $F_\Gamma$ in $\Stap_N(\qq{N})$ is induced
from some face $F_Q$ in $\Stap_N(Q)$ via the L-immersion $\hua{I}$ as in Theorem~\ref{thm:Q},
in the sense that we have
\[
    \hua{I}_*(F_Q)=\hua{I}_*\left(\cc{\cub(\nh)}\cap\cub(\nh')\right)
    =\cc{\hua{I_*}(\cub(\nh))}\cap\hua{I}_*(\cub(\nh'))
    =\cc{\cub(\h)}\cap\cub(\h')
    =F_\Gamma.
\]
Thus we can glue the homeomorphisms in Lemma~\ref{lem:induce} to a homeomorphism
\[\hua{I}_*:\Stap_N(Q)\to\Stap_N(\qq{N}).\]

Let $\hua{S}_N=m\cdot\Stap_N(Q)$, for $m=\lfloor -\frac{N}{2} \rfloor$,
where $\cdot$ is the natural $\kong{C}$-action on the space of stability conditions
defined in \S~\ref{sec:SC}.
Then $1^\circ$ follows from the limit formula in Proposition~\ref{pp:lim}
and we have homeomorphisms
\[\hua{S}_N\cong\Stap_N(Q)\cong\Stap_N(\qq{N}),\]
which completes the proof.
\end{proof}

\begin{example}
The calculations of $\Stab(A_2)$ and $\Stap(\Gamma_N A_2)$
in \cite{BQS} (cf. \cite[p16, Figure~2]{ACV})
illustrate the idea of the limit in Theorem~\ref{thm:limit} in the $A_2$ case.
\end{example}

\section{Directed paths and HN-strata}\label{sec:DPHN}
In this section, we will study the relations between
directed paths in the exchange graph $\EG(Q)$,
HN-strata for $\nzero$,
slicings on $\D(Q)$ and stability functions on $\nzero$.

\subsection{Directed paths}\label{sec:DP}
Let $\EG(Q;\h_1,\h_2)$ be the full subgraph of $\EG(Q)$
consisting of hearts $\h_1\leq\h\leq\h_2$.
Denote by $\dpath(\h_1,\h_2)$
the set of all directed paths from $\h_1$ to $\h_2$ in $\EG(Q;\h_1,\h_2)$.

\begin{lemma}
Suppose $\h_1\leq\h_2$.
Then $\dpath(\h_1,\h_2)\neq\emptyset$
if at least one of $\h_1$ and $\h_2$ is standard.
In particular, we have
\[
    \EG(Q;\h,\h[N-2])=\EG_N(Q,\h)=\EGp_N(Q,\h),
\]
for any standard heart $\h\in\EG(Q)$.
\end{lemma}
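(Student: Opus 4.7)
The plan is to construct a directed path by iterative simple tilting, starting from whichever endpoint is standard. I would first reduce to a finite interval: when $\h_1$ is standard, every simple $T$ of $\h_2$ is an indecomposable of $\D(Q)$, so by \eqref{eq:ar} (applied to the standard heart $\h_1 \cong \nzero_{Q'}$) it lies in $\h_1[k]$ for some integer $k$. Because $\h_1 \leq \h_2$ forces $T \in \hua{P}_1$, we get $k \geq 0$; taking $N-1$ to be the maximum such $k$ over the finitely many simples of $\h_2$, and using that $\h_2$ is the extension closure of its simples, yields $\h_2 \leq \h_1[N-1]$. By Proposition~\ref{pp:lim}, $\EG_N(Q,\h_1)$ is then a finite interval containing $\h_2$.

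Next I would recursively build a chain $\h_1 = \tilde\h_0 \to \tilde\h_1 \to \tilde\h_2 \to \cdots$ of simple forward tilts with each $\tilde\h_k \leq \h_2$. Given $\tilde\h_k < \h_2$, the opening step of the proof of Proposition~\ref{pp:EGp=EG} applies verbatim: if every simple of $\tilde\h_k$ lay in $\h_2$, then $\tilde\h_k \subset \h_2$ as full subcategories, hence $\hua{P}_k \subset \hua{P}_2$ (that is, $\tilde\h_k \geq \h_2$), contradicting $\tilde\h_k < \h_2$. So I can pick a simple $S_k \in \Sim\tilde\h_k$ with $S_k \notin \h_2$. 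Since $\tilde\h_k \leq \h_2 \leq \h_1[N-1] \leq \tilde\h_k[N-1]$, I would invoke \cite[Proposition~9.1]{Q} to conclude that $\tilde\h_{k+1} := \tilt{\tilde\h_k}{\sharp}{S_k}$ still satisfies $\tilde\h_{k+1} \leq \h_2$. The resulting chain is strictly increasing and confined to the finite set $\EG_N(Q,\h_1)$, so it must terminate; termination can only occur at $\h_2$ itself, producing the required element of $\dpath(\h_1,\h_2)$.

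The dual case, when $\h_2$ is standard, is handled by the mirror-image argument: iterate simple \emph{backward} tilts downward from $\h_2$, choosing at each stage a simple of the current heart that does not lie in $\h_1$, and terminate at $\h_1$ by the finiteness of the corresponding interval $\EG_N(Q,\h_2[-N+1])$. For the final assertion, $\EG(Q;\h[1],\h[N-1]) = \EG_N(Q,\h)$ is a direct unpacking of the definition of $\EG_N$, and the equality $\EG_N(Q,\h) = \EGp_N(Q,\h)$ follows at once from applying the main assertion with $\h_1 = \h[1]$ (standard): every heart in the interval is reached from $\h[1]$ by a directed path, hence lies in the principal component.

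The main obstacle is Step 2, showing that $\tilt{\tilde\h_k}{\sharp}{S_k}$ stays $\leq \h_2$ rather than overshooting. This is precisely where the assumption that one of $\h_1$, $\h_2$ is standard enters essentially: without it, one has no uniform bound $N$ placing $\h_2$ in the $(N-1)$-shift interval above $\h_1$, and \cite[Proposition~9.1]{Q} has no range in which to operate. A secondary care point is verifying that the chain is strictly increasing, but this is automatic because each step moves the simple $S_k$ to $S_k[1]$ and the $\h_1$-heart contains $S_k$ while the tilt does not.
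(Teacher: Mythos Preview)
Your overall plan---iterate simple tilts and terminate by finiteness of an interval---is the same as the paper's, but you tilt in the \emph{opposite} direction, and this is where the argument becomes unsafe.

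The paper, assuming $\h_1$ is standard, backward tilts from $\h_2$ towards $\h_1$. The reason this works cleanly is that every indecomposable of $\D(Q)$, in particular every simple $S_j$ of the current heart, has $\h_1$-homology concentrated in a single degree $m_j$ (this is exactly condition~$4^\circ$ in Proposition~\ref{pp:standard}, equivalent to $\h_1$ being standard). Then ``$S_j\notin\h_1$'' is the same as ``$m_j>1$'', which is precisely the hypothesis of \cite[Lemma~5.4]{Q} guaranteeing that the backward tilt $\tilt{(\h_2)}{\flat}{S_j}$ stays $\geq\h_1$.

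You instead forward tilt from $\h_1$ towards the generally \emph{non-standard} $\h_2$, and claim that $S_k\notin\h_2$ forces $\tilt{\tilde\h_k}{\sharp}{S_k}\leq\h_2$ by \cite[Proposition~9.1]{Q}. But your appeal to the argument of Proposition~\ref{pp:EGp=EG} is not ``verbatim'': there the hypothesis $\h'\leq\h_j[1]$ is set up explicitly before invoking \cite[Proposition~9.1]{Q}, and that bound is what makes the torsion-pair translation work. You only have $\h_2\leq\tilde\h_k[N-1]$. What one actually needs for $\tilt{\tilde\h_k}{\sharp}{S_k}\leq\h_2$ is $S_k\in\hua{P}_2^\perp$; from $\tilde\h_k\leq\h_2$ one gets $S_k\in\hua{P}_2^\perp[1]$, and together with $S_k\notin\h_2$ only $S_k\notin\hua{P}_2$. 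The residual possibility---that $S_k$ has nonzero $\h_2$-homology in both degree $0$ and some negative degree---is not ruled out when $\h_2$ is non-standard, so the step is unjustified as written. Your ``dual case'' inherits the mirror image of the same gap.

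The fix is simply to reverse the direction: when $\h_1$ is standard, backward tilt from $\h_2$; when $\h_2$ is standard, forward tilt from $\h_1$. Then the target heart is standard and the concentrated-homology argument goes through.
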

\begin{proof}
Without loss of generality, suppose that $\h_1=\nzero[1]$ which is standard.
For any simple $S_i\in\Sim\h_2$,
$S_i\in\nzero[m_i]$ for some integer $m_i$ by \eqref{eq:ar}.
Since $\h_1\leq\h_2$, we have $m_i\geq1$.
Choose $N\gg1$ such that $\h_2\in\EGp_N(Q,\nzero)$ and then
$\#\Ind(\hua{P}_1-\hua{P}_2)$ is finite.
If $\h_1<\h_2$, there exists $j$ such that $m_j>1$.
By \cite[Lemma~5.4]{Q}, we can backward tilt $\h_2$ to $\tilt{(\h_2)}{\flat}{S_j}$
within $\EGp_N(Q,\nzero)$ which reduces $\#\Ind(\hua{P}_1-\hua{P}_2)$.
Thus we can iterated backward tilt $\h_2$ to $\h_1$ inductively,
which implies the lemma.
\end{proof}

Define the distance $\dis(\h_1, \h_2)$
and diameter $\dia(\h_1,\h_2)$ between $\h_1$ and $\h_2$
to be the minimum and respectively maximum over
the lengths of the paths in $\dpath(\h_1,\h_2)$.
Recall that we have the position function $\pf$ defined in Definition/Lemma~\ref{def:pf}.
Since $\tau^{h_Q}=[-2]$, we have
\[
    \pf(M[1])-\pf(M)=h_Q, \quad\forall M\in\AR(\D(Q)).
\]
Here $h_Q$ is the \emph{Coxeter number},
which equals $n+1, 2(n-1), 12, 18,30$ for $Q$ of type $A_n, D_n, E_6, E_7, E_8$ respectively.
There are the following easy estimations.

\begin{lemma}\label{lem:path}
Suppose that $\dpath(\h_1,\h_2)\neq0$.
Let $\hua{P}_i$ be the t-structure corresponding to $\h_i$.
We have
\begin{gather}
    \dia(\h_1,\h_2)\leq\#\Ind(\hua{P}_1-\hua{P}_2)\label{eq:di1}\\
    \dia(\h_1,\h_2)\leq\#\Ind(\hua{P}_2^\perp-\hua{P}_1^\perp)\label{eq:di1.5}\\
    \dis(\h_1,\h_2)\leq \frac{\pf(\h_2)-\pf(\h_1)}{h_Q}.\label{eq:di2}
\end{gather}
In particular $\dis(\h,\h[m])\geq nm$ with equality if $\h$ is standard.
\end{lemma}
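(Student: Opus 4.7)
The three inequalities record how three different invariants change along a directed path in $\EG(Q;\h_1,\h_2)$; my plan is to bound each invariant's change across a single simple forward tilt and then sum along the path.

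For \eqref{eq:di1} and \eqref{eq:di1.5}, I would fix a directed path $\h_1=\h^{(0)}\to\h^{(1)}\to\cdots\to\h^{(k)}=\h_2$ with corresponding t-structures $\hua{P}_1=\hua{P}^{(0)}\supsetneq\hua{P}^{(1)}\supsetneq\cdots\supsetneq\hua{P}^{(k)}=\hua{P}_2$. The consecutive differences $\Ind(\hua{P}^{(i)}-\hua{P}^{(i+1)})$ are pairwise disjoint subsets of $\Ind(\hua{P}_1-\hua{P}_2)$, so it suffices to show each is non-empty: if the $i$-th step is a simple forward tilt at $S\in\Sim\h^{(i)}$ then $S\in\h^{(i)}\subset\hua{P}^{(i)}$, while $S[1]\in\h^{(i+1)}$ (so $S\in\h^{(i+1)}[-1]$) places $S$ outside $\hua{P}^{(i+1)}$. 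Summing over $i$ yields $k\leq\#\Ind(\hua{P}_1-\hua{P}_2)$, which is \eqref{eq:di1}. The parallel argument for the right orthogonals --- where now the same $S$ joins $(\hua{P}^{(i+1)})^\perp$ at this step while $\Hom(S,S)\neq 0$ keeps it out of $(\hua{P}^{(i)})^\perp$ --- establishes \eqref{eq:di1.5}.

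For \eqref{eq:di2}, the key claim is that any simple forward tilt $\h\to\h'=\tilt{\h}{\sharp}{S}$ raises $\pf$ by at most $h_Q$. Writing $\Sim\h=\{S\}\cup\{T_i\}$, one has $\Sim\h'=\{S[1]\}\cup\{T'_i\}$: by Lemma~\ref{lem:lem} at most one of $\Hom^1(T_i,S)$ and $\Hom^1(S,T_i)$ is non-zero, and the HRS tilting recipe then forces $T'_i$ to be either $T_i$ itself (when $\Hom^1(T_i,S)=0$) or the universal extension $0\to S^{\oplus k_i}\to T'_i\to T_i\to 0$ with $k_i=\dim\Hom^1(T_i,S)$. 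In the latter case the induced surjection $T'_i\twoheadrightarrow T_i$ is non-zero, so Lemma~\ref{lem:homs} places $T_i$ among the successors of $T'_i$ in $\AR(\D(Q))$, and monotonicity of the position function forces $\pf(T'_i)\leq\pf(T_i)$. Combined with $\pf(S[1])-\pf(S)=h_Q$, this yields $\pf(\h')-\pf(\h)\leq h_Q$, and summing along the path gives \eqref{eq:di2}.

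The bound $\dis(\h,\h[m])\geq nm$ follows immediately since $\pf(\h[m])-\pf(\h)=nm\cdot h_Q$. For equality when $\h$ is standard, I would construct an explicit directed path of length $nm$: via Proposition~\ref{pp:standard} identify $\h$ with $\h_{Q'}$ for some reorientation $Q'$ of $Q$, fix an ordering $v_1,\ldots,v_n$ of $Q'_0$ such that $v_j$ is a source in the full subquiver on $\{v_j,\ldots,v_n\}$, and at the $j$-th step tilt at the (current) simple associated with $v_j$. The source property ensures no other current simple has non-vanishing $\Hom^1$ into this one, so the tilt leaves the remaining simples untouched and contributes exactly $h_Q$ to $\pf$; after $n$ such tilts the heart is $\h[1]$, and iterating $m$ blocks realises a path $\h\to\h[m]$ of length $nm$. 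The main technical obstacle is the inequality $\pf(T'_i)\leq\pf(T_i)$, which combines the Dynkin positivity of the Euler form exploited in Lemma~\ref{lem:lem} with the AR-sectional description of Hom-support in Lemma~\ref{lem:homs}; both inputs are essential.
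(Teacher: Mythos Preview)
Your argument is correct and follows the same per-edge strategy as the paper: bound how each of the three invariants moves under a single simple forward tilt and then sum along a directed path. Your treatment of \eqref{eq:di1} and \eqref{eq:di1.5} is identical in spirit to the paper's one-line observation that $\Ind\hua{P}\supsetneq\Ind\tilt{\hua{P}}{\sharp}{S}$, and your construction of the length-$nm$ path in the standard case is the same source-by-source tilting the paper invokes via \cite[Corollary~5.3]{Q}.

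For \eqref{eq:di2} your version is in fact the correct one. The paper asserts that the new simple $T_j$ is a \emph{successor} of $S_j$, giving $\pf(\tilt{\h}{\sharp}{S})-\pf(\h)\geq h_Q$; but that inequality would only yield $\dia\leq(\pf(\h_2)-\pf(\h_1))/h_Q$, not the stated lower bound on $\dis$. Your observation that the universal extension $0\to S^{\oplus k_i}\to T'_i\to T_i\to 0$ produces a surjection $T'_i\twoheadrightarrow T_i$, so that by Lemma~\ref{lem:homs} the old simple $T_i$ is a successor of the new one and hence $\pf(T'_i)\leq\pf(T_i)$, gives the needed bound $\pf(\tilt{\h}{\sharp}{S})-\pf(\h)\leq h_Q$ and hence $k\geq(\pf(\h_2)-\pf(\h_1))/h_Q$ for every directed path of length $k$. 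A quick sanity check in type $A_2$ (tilting $\nzero$ at the sink simple $S_2$ replaces $S_1$ by $P_1$, so $\pf$ increases by $h_Q-1<h_Q$) confirms that your direction is the right one and that the paper's printed proof contains a sign slip here.
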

\begin{proof}
For any edge $\h\to\tilt{\h}{\sharp}{S}$,
we have $\Ind\hua{P}\supsetneqq\Ind\tilt{\hua{P}}{\sharp}{S}$
and hence \eqref{eq:di1} follows. Similarly for \eqref{eq:di1.5}.

By \cite[Proposition~5.2]{Q} we have \cite[formula (5.2)]{Q}.
Notice that $T_j=\tilt{\psi}{\sharp}{S_i}(S_j)$ is a predecessor of $S_j$ and hence $\pf(T_j)<\pf(S_j)$.
We have
\[
    \pf( \tilt{\h}{\sharp}{S} )- \pf(\h)=\pf(S[1])-\pf(S)
    +\sum_{j\in\tilt{J}{\sharp}{i}}(\pf(T_j)-\pf(S_j))
    \leq\pf(S[1])-\pf(S)=h_Q,
\]
which implies the inequality \eqref{eq:di2}.

In particular, if $\h_1=\h$ and $\h_2=\h[m]$, the RHS of \eqref{eq:di2} equals $mn$.

Now suppose $\h$ is standard,  and without loss of generality let $\h=\nzero$.
Label the simples $S_1,...,S_n$ such that $\pf(S_1)\leq\pf(S_2)\leq...\leq\pf(S_n)$.
By Lemma~\ref{lem:homs}, $\Hom(M,L)\neq0$ implies $L$ is a successor of $M$
and hence $\pf(M)<\pf(L)$. Thus $\Hom^1(S_i, S_j)=0$ for $i>j$.
As in the proof of \cite[Corollary~5.3]{Q}, we can tilt from $\h$ to $\h[1]$
with respect to the simples $S_n , ... , S_1$ in order,
which implies $\dis(\h,\h[m])=mn$, as required.
\end{proof}

Next, we give a characterization of
the longest paths in $\dpath(\nzero,\nzero[1])$.

\begin{proposition}\label{pp:longest}
Let $\h$ be a standard heart,
then we have
\begin{gather}\label{eq:diam max}
    \dia(\h,\h[1])=\#\Ind\nzero=n\cdot \frac{h_Q}{2}.
\end{gather}
Moreover,
a path $p$ in $\dpath(\h,\h[1])$ has the longest length if and only if
all vertices of $p$ are standard hearts.
\end{proposition}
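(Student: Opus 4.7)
The plan is to sandwich the diameter between two matching values via Lemma~\ref{lem:path} and an explicit APR-tilt construction, then characterise the longest paths by analysing exactly which simple tilts drop a single indecomposable from the t-structure.

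\emph{Matching bounds for the diameter.} Without loss of generality take $\h=\nzero$. One checks $\Ind\hua{P}_{\nzero}-\Ind\hua{P}_{\nzero[1]}=\Ind\nzero$, which by Gabriel's theorem has cardinality $nh_Q/2$ (the number of positive roots of the underlying Dynkin root system), so \eqref{eq:di1} yields $\dia(\nzero,\nzero[1])\leq nh_Q/2$. For a matching lower bound I would iterate APR tilts from $\nzero$: every Dynkin orientation has a projective simple, every APR tilt returns another standard heart, and (see the key lemma below) each drops exactly one indecomposable of $\nzero$ from the t-structure, so after precisely $nh_Q/2$ tilts all of $\Ind\nzero$ has been exhausted and we must have reached $\nzero[1]$.

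\emph{Key local lemma.} The technical core is the following: for any standard heart $\h'=\h_{Q'}$ and any simple $S\in\Sim\h'$, the simple forward tilt $\h'\to\tilt{\h'}{\sharp}{S}$ drops exactly one indecomposable from $\hua{P}$ iff $S$ is projective in $\h'$, and in that case the result is again standard (the APR/BGP tilt at the source of $Q'$ corresponding to $S$). To prove it I would unwind $\tilt{\h'}{\sharp}{S}=\<\hua{T},\hua{F}[1]\>$ with $\hua{F}=\<S\>$: for $M\in\Ind\h'$ with torsion sequence $0\to T\to M\to F\to 0$, a direct check gives $M\in\hua{P}_{\tilt{\h'}{\sharp}{S}}$ iff $F=0$, equivalently $\Hom_{\h'}(M,S)=0$. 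The dropped indecomposables are therefore the indecomposable quotients of the projective cover $P_S\in\mod\k Q'$, and this set collapses to $\{S\}$ exactly when $P_S=S$.

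\emph{Extremal paths and the main obstacle.} Given $p\in\dpath(\nzero,\nzero[1])$ of length $nh_Q/2$, every edge must drop exactly one indecomposable (the total is $nh_Q/2$ and each edge drops at least one); starting from the standard heart $\nzero$ and inducting with the key lemma, every vertex of $p$ is therefore standard. Conversely, any path in $\dpath(\nzero,\nzero[1])$ through only standard hearts consists of simple tilts between standard hearts, which via the well-known bijection between elementary section moves in $\kong{Z}Q$ and BGP reflections at sources must each be APR tilts; each such edge drops exactly one indecomposable, so the path attains length $nh_Q/2$. The hardest step will be the key local lemma, in particular cleanly identifying indecomposables dropped under a simple tilt with indecomposable quotients of the projective cover; this needs careful bookkeeping of the canonical filtrations with respect to $\h'$ and to $\tilt{\h'}{\sharp}{S}$ across the torsion pair, but once in place the positive-root count and the induction on path position are routine.
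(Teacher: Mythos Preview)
Your strategy matches the paper's: bound the diameter above via \eqref{eq:di1}, exhibit a matching APR-tilt path for the lower bound, then characterise extremal paths edge-by-edge. There is, however, one genuine slip in your key local lemma. The dropped indecomposables are indeed $\{M\in\Ind\h':\Hom_{\h'}(M,S)\neq0\}$, but these are \emph{not} in general the indecomposable quotients of $P_S$: take $Q'=D_4$ with all arrows out of the central vertex $c$ and $S=S_c$; the indecomposable with dimension vector the highest root has $c$-coordinate $2$, so it cannot be a quotient of $P_{S_c}$, yet it surjects onto $S_c$ and is therefore dropped. Fortunately you do not need that description: directly from the set $\{M:\Hom(M,S)\neq0\}$ one sees it equals $\{S\}$ iff every surjection $M\twoheadrightarrow S$ splits, i.e.\ iff $S$ is projective (and if $S$ is not projective, any almost-split sequence ending in $S$ supplies a second dropped indecomposable). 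With that correction your argument goes through. Also note that in the paper's conventions a projective simple sits at a \emph{sink}, not a source.

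For comparison, the paper's ``longest $\Rightarrow$ all standard'' step avoids identifying $S$ altogether: when exactly one indecomposable is removed from $\hua{P}$ (and symmetrically one is added to $\hua{P}^\perp$), the union $\Ind\hua{P}\cup\Ind\hua{P}^\perp$ is unchanged, so standardness propagates by criterion~$2^\circ$ of Proposition~\ref{pp:standard}. For the converse the paper argues directly that a simple tilt between two standard hearts must be APR at a sink: if the vertex of $S$ had an outgoing arrow to some $V'$ then $\Ext^1(S,S')\neq0$, forcing $S\notin\tilt{\hua{P}}{\sharp}{S}$ and $S\notin(\tilt{\hua{P}}{\sharp}{S})^\perp$, contradicting standardness of $\tilt{\h}{\sharp}{S}$. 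Your explicit ``drops one iff projective'' route is a bit more hands-on but lands in the same place.
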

\begin{proof}
We can tilt from $\h$ to $\h[1]$ by a sequence of APR-tiltings,
which are L-tiltings (cf. Definition~\ref{def:L}).
By Corollary~\ref{cor:L-tilting}, such a path consisting of L-tiltings has length
\[
    \#\Ind(\hua{P}-\hua{P}[1])=\#\Ind(\hua{P}[1]^\perp-\hua{P}^\perp)
    =\#\Ind\nzero.
\]
Together with \eqref{eq:di1}, we have \eqref{eq:diam max}.

Suppose $p$ is a longest path.
We inductively show that any heart in $p$ is standard,
starting from the head $\nzero$ of $p$, which is standard.
Consider an edge $\h \to \tilt{\h}{\sharp}{S}$ in $p$
with $\h$ is standard.
Since $p$ is longest, by \eqref{eq:di1}, we have
\[\#\Ind(\hua{P}-\tilt{\hua{P}}{\sharp}{S})=1.\]
Noticing that $S\in(\hua{P}-\tilt{\hua{P}}{\sharp}{S})$, we have
\[
    \Ind \tilt{\hua{P}}{\sharp}{S}=\Ind\hua{P}-\{S\}.
\]
Similarly, we have
\[
    \Ind \left(\tilt{\hua{P}}{\sharp}{S}\right)^\perp=\Ind(\hua{P})^\perp \cup \{S\}.
\]
and hence
\begin{gather}\label{eq:ind}
    \Ind\hua{P} \cup \Ind\hua{P}^\perp=
    \Ind \tilt{\hua{P}}{\sharp}{S} \cup
    \Ind \left(\tilt{\hua{P}}{\sharp}{S}\right)^\perp.
\end{gather}
By Proposition~\ref{pp:standard},
a heart $\h'$ is standard if and only if
\[
    \Ind\hua{D}(Q) = \Ind\hua{P}' \cup \Ind(\hua{P}')^\perp.
\]
Therefore, by \eqref{eq:ind},
$\h$ is standard implies that so is $\tilt{\h}{\sharp}{S}$.
Thus the necessity follows.

On the other hand,
if $\h$ and its simple forward tilts $\tilt{\h}{\sharp}{S}$ are standard,
we claim that it is an APR-tilting at a sink.
Suppose not, that the vertex $V\in Q_0$ corresponding to $S$ is not a sink.
Then there is an edge $(V \to V') \in Q_1$ which
corresponds to a nonzero map in $\Ext^1(S,S')$,
where $S'$ is the simple corresponding to $V'$.
Then $S\notin(\tilt{\hua{P}}{\sharp}{S})^\perp$
since $S'[1]\in\hua{P}[1]\subset\tilt{\hua{P}}{\sharp}{S}$ by Lemma~\ref{lem:well known}.
Notice that $S\notin\tilt{\hua{P}}{\sharp}{S}$,
we know that $\tilt{\h}{\sharp}{S}$ is not standard by Proposition~\ref{pp:standard}~$2^\circ$,
which is a contradiction.
Thus if all the vertices of a path $p$ are standard
then it consists of only APR-tiltings, which are L-tiltings.
By Corollary~\ref{cor:L-tilting}, we know that
the length of $p$ is $\#\Ind\nzero$, which implies $p$ is longest by \eqref{eq:diam max}.
\end{proof}

\subsection{HN-strata}
In this subsection, we use Reineke's notion of HN-strata to
give an algebraic interpretation of
\[\dpath(Q)\colon=\dpath(\nzero,\nzero[1]).\]

\begin{definition}\label{def:HN}
A (discrete) \emph{HN-stratum} $\oset{T_l, ..., T_1}$
in an abelian category $\hua{C}$
is an ordered collection of objects $T_l,...,T_1$ in $\Ind\hua{C}$,
satisfying the HN-property:
\begin{itemize}
\item $\Hom(T_i,T_j)=0$ for $i>j$.
\item For any nonzero object $M$ in $\hua{C}$,
there is an HN-filtration by short exact sequences
\begin{gather}\label{eq:HN filt}
    \xymatrix@C=0.7pc{
    0=M_0 \ar[rr] && M_1 \ar[dl] \ar[rr] &&  ... \ar[rr] && M_{m-1} \ar[rr] && M_m=M \ar[dl] \\
    & A_{j_1} \ar@{-->}[ul]  && && && A_{j_m} \ar@{-->}[ul]
}\end{gather}
with $A_{j_i}$ is in $\<T_{j_i}\>$ and $1\leq j_m<...<j_1\leq l$.
\end{itemize}
\end{definition}
Notice that the uniqueness of HN-filtration follows from the first condition in HN-property.
Denote by $\HN(Q)$ the set of all HN-strata of $\nzero$.
We claim that there is a canonical bijection between $\dpath(Q)$ and $\HN(Q)$.

Let $p=T_l \cdot ... \cdot T_1$ be a path in $\dpath(Q)$
\begin{gather*}
    p: \nzero=\h_0 \xrightarrow{T_1}  \h_1 \xrightarrow{T_2}
     ...\xrightarrow{T_l} \h_l=\nzero[1]
\end{gather*}
with corresponding t-structures $\hua{P}_0\supset\hua{P}_1\supset...\supset\hua{P}_l$.
We have the following lemmas.

\begin{lemma}\label{lem:costruction}
For any indecomposable $M$ in $\nzero$, there is a filtration as \eqref{eq:HN filt}
such that $A_{j_i}$ is in $\<T_{j_i}\>$ and $1\leq j_m<...<j_1\leq l$.
\end{lemma}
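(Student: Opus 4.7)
The plan is to prove a slightly stronger statement by induction on $l$, the length of the path: for any directed path $p=T_l\cdots T_1$ in $\EG(Q)$ from a heart $\h_0$ to a heart $\h_l$, and any object $M\in\h_0$, there exists a filtration as in \eqref{eq:HN filt} with cofibers $A_{j_i}\in\<T_{j_i}\>$ and $1\leq j_m<\cdots<j_1\leq l$. Lemma~\ref{lem:costruction} is then the special case $\h_0=\nzero$, $\h_l=\nzero[1]$. The base case $l=0$ is trivial (take $0\to M$).

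For the inductive step, I would exploit the torsion pair in $\h_0$ corresponding to the first simple forward tilt $\h_0\to\h_1=\tilt{\h_0}{\sharp}{T_1}$. By the definition of a simple tilt recalled in Section~\ref{sec:TT}, this torsion pair is $\<\<T_1\>,\hua{T}_1\>$, so $M$ fits into a short exact sequence
\[
    0\to T\to M\to F\to 0
\]
in $\h_0$ with $T\in\hua{T}_1$ and $F\in\<T_1\>$. By Lemma~\ref{lem:well known} (HRS construction), $\hua{T}_1\subset\h_1$, so $T$ lies in $\h_1$.

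Now apply the inductive hypothesis to $T\in\h_1$ with the shorter path $T_l\cdots T_2\colon\h_1\to\h_l$. This yields a filtration
\[
    0=T_{(0)}\to T_{(1)}\to\cdots\to T_{(m')}=T
\]
with cofibers $A_{j'_i}\in\<T_{j'_i}\>$ for $l\geq j'_1>\cdots>j'_{m'}\geq 2$. Appending the triangle $T\to M\to F\to T[1]$ at the top extends this to a filtration of $M$ whose cofibers are $A_{j'_1},\dots,A_{j'_{m'}},F$. Setting $j_i=j'_i$ for $i\leq m'$ and $j_{m'+1}=1$, the inequality $j_1>\cdots>j_{m'}\geq 2>1=j_{m'+1}$ holds, giving the required filtration of $M$.

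The main subtlety, more than a genuine obstacle, is verifying that the torsion pair attached to a simple forward tilt at $T_1$ really has torsion-free part $\<T_1\>$ and that its torsion part $\hua{T}_1=\h_0\cap\h_1$ embeds into $\h_1$---both follow immediately from the HRS setup in Section~\ref{sec:TT}. Note also that the inductive statement must be phrased for \emph{arbitrary} objects of the starting heart (not only indecomposables), since the intermediate object $T$ obtained after the first peel may decompose; the lemma as stated is then the special case of indecomposable $M\in\nzero$.
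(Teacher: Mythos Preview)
Your proof is correct and follows essentially the same strategy as the paper: both build the filtration by iteratively peeling off the bottom cofiber via the torsion pair of a simple tilt along the path. The only organizational difference is that the paper, instead of inducting on $l$ with a generalized statement, locates directly the first index $j$ with $M\in\hua{P}_{j-1}-\hua{P}_j$, takes the short exact sequence $0\to M'\to M\to A_j\to 0$ in $\h_{j-1}$ coming from the tilt $\h_{j-1}\to\h_j$, and then recurses on $M'\in\hua{P}_j$; this avoids both the generalization to arbitrary starting hearts and the possibly trivial first step when $F=0$.
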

\begin{proof}
We construct such a filtration as follows.
Since
\[
    M\in\hua{P}_0-\hua{P}_l=\bigcup_{i=1}^l \left(\hua{P}_{i-1}-\hua{P}_i\right),
\]
there exists an integer
$0<j\leq l$ such that $M\in\hua{P}_{j-1}-\hua{P}_{j}$.
Since $\h_{j}=(\h_{j-1})^\sharp_{T_{j}}$, we have a short exact sequence
\[\xymatrix{
    0\ar[r]& M' \ar[r]& M \ar[r]& A_j \ar[r]& 0
}\]
such that $A_j$ in $\<T_j\>$.
This is the last short exact sequence in the required filtration.
Since $M'$ is in the torsion part corresponding to $(\h_{j-1})^\sharp_{T_{j}}$,
we have
\[
    M' \in\hua{P}_j-\hua{P}_l=\bigcup_{i=j}^l \left(\hua{P}_{i-1}-\hua{P}_i\right).
\]
Therefore we can repeat the procedure above and the lemma follows by induction.
\end{proof}

\begin{lemma}\label{lem:tp1}
Suppose that $0\leq j\leq l$.
Let $\hua{F}_j=\<T_1, ..., T_j\>$ and $\hua{T}_j=\<T_{j+1},...T_l\>$.
Then $(\hua{F}_j,\hua{T}_j)$ is a torsion pair in $\nzero$
and $\h_{j}=\tilt{(\nzero)}{\sharp}{}$ with respect to this torsion pair.
\end{lemma}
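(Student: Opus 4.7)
The plan is to induct on $j$, taking the torsion-pair identification at stage $j-1$ as given and promoting it to stage $j$. The base case $j=0$ amounts to showing $\<T_1,\ldots,T_l\> = \nzero$, which is an immediate reading of the filtration supplied by Lemma~\ref{lem:costruction}: every indecomposable of $\nzero$ is an iterated extension of terms from the $\<T_i\>$, hence lies in $\hua{T}_0$. In this case the trivial torsion pair $(0, \nzero)$ corresponds to the forward tilt $\nzero = \h_0$, and the case $j=l$ is symmetric.

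The technical heart of the inductive step is the Hom vanishing $\Hom(T_k, T_i) = 0$ whenever $k > i$, which I would derive from the chain of t-structures $\hua{P}_0 \supset \hua{P}_1 \supset \cdots \supset \hua{P}_l$. Since $T_i$ is the torsion-free simple for the tilt $\h_i = (\h_{i-1})^\sharp_{T_i}$, one gets $T_i \in \hua{P}_i^\perp$; meanwhile $T_k \in \h_{k-1} \subset \hua{P}_{k-1} \subset \hua{P}_i$ for $k > i$, so the defining orthogonality $\Hom(\hua{P}_i, \hua{P}_i^\perp) = 0$ kills the Hom. This vanishing propagates to $\Hom(\hua{T}_j, \hua{F}_j) = 0$ since the two subcategories are by construction closed under extensions. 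To produce the short exact sequence demanded of a torsion pair, I would feed any $M \in \nzero$ into Lemma~\ref{lem:costruction}: its filtration has factors $A_{j_r}$ indexed by $j_1 > \cdots > j_m$, so one selects the threshold $r$ with $j_r > j \ge j_{r+1}$ and splits at $M_r$. Then $M_r \in \<T_{j_1},\ldots,T_{j_r}\> \subset \hua{T}_j$ and $M/M_r \in \<T_{j_{r+1}},\ldots,T_{j_m}\> \subset \hua{F}_j$, as required.

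Finally, to identify $(\hua{F}_j, \hua{T}_j)$ with the torsion pair canonically associated to $\h_j$ via Lemma~\ref{lem:well known}, it suffices to check $\nzero \cap \h_j = \hua{T}_j$. Using the inductive description $\nzero \cap \h_{j-1} = \<T_j, T_{j+1},\ldots,T_l\>$, the intrinsic torsion subcategory of the simple forward tilt at $T_j$ (namely, objects of $\h_{j-1}$ admitting no nonzero quotient isomorphic to $T_j$), and the Hom vanishing $\Hom(T_k, T_j) = 0$ for $k > j$, one reads off exactly $\<T_{j+1},\ldots,T_l\>$; the complementary equality $\nzero \cap \h_j[-1] = \hua{F}_j$ then falls out of the torsion-pair bijection. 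The main obstacle I anticipate is the bookkeeping between three layers of torsion-pair data — the ambient pair in $\nzero$ from the inductive hypothesis, the intrinsic pair in $\h_{j-1}$ governing the simple tilt at $T_j$, and the new pair in $\nzero$ to be extracted — together with keeping straight the quotient-versus-subobject direction when transporting the filtration of Lemma~\ref{lem:costruction} through these layers; the Hom vanishing established above is precisely what reconciles them.
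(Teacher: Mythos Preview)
Your proposal is correct and follows essentially the same approach as the paper: both argue by induction on $j$, derive the Hom vanishing $\Hom(T_k,T_i)=0$ for $k>i$ from the chain of t-structures $\hua{P}_0\supset\cdots\supset\hua{P}_l$, use Lemma~\ref{lem:costruction} to supply the torsion-pair short exact sequence, and identify $\h_j$ with the forward tilt via the characterization $\nzero\cap\h_j=\hua{T}_j$ from Lemma~\ref{lem:well known}. Your write-up is in fact more explicit than the paper's at two points---the threshold splitting of the filtration and the passage from $\nzero\cap\h_{j-1}=\hua{T}_{j-1}$ to $\nzero\cap\h_j=\hua{T}_j$---where the paper simply says ``this follows from $\h_{j+1}=\tilt{(\h_j)}{\sharp}{T_{j+1}}$''.
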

\begin{proof}
Use induction on $j$ starting from the trivial case when $j=0$.
Now suppose that $\h_{j}=\tilt{(\nzero)}{\sharp}{}$ with respect to $(\hua{F}_j,\hua{T}_j)$.
Since $T_{j+1}$ is a simple in $\h_{j+1}$ and $T_k\in\hua{P}_{j+1}$ for $k>j+1$,
we have $\Hom(T_k, T_{j+1})=0$,
which implies $\Hom(A, B)=0$ for any $A\in\hua{T}_{j+1}, B\in\hua{F}_{j+1}$.
By Lemma~\ref{lem:costruction} we know that for any object $M$ in $\Ind\nzero$,
there is a short exact sequence $0\to A \to M \to B \to 0$  such that
$A\in\hua{T}_{j+1}$ and $B\in\hua{F}_{j+1}$.
Therefore $(\hua{F}_{j+1},\hua{T}_{j+1})$ is a torsion pair in $\nzero$.
By Lemma~\ref{lem:well known}, we have $\h_{j}\cap\nzero=\hua{T}_{j}$.
To finish we only need to show that $\h_{j+1}\cap\nzero=\hua{T}_{j+1}$.
This follows from $\h_{j+1}=\tilt{(\h_j)}{\sharp}{T_j}$.
\end{proof}

Now we have an injection $\dpath(Q)\to\HN(Q)$ as follows.

\begin{corollary}\label{cor:induce}
Any directed path $p=T_l \cdot ... \cdot T_1$ in $\dpath(Q)$
induces an HN-stratum $\oset{T_l, ..., T_1}$ in $\HN(Q)$.
\end{corollary}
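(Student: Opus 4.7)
The plan is to verify the two defining axioms of an HN-stratum for the ordered collection $\oset{T_l,\ldots,T_1}$ induced by the directed path $p$: (a) the Hom-vanishing $\Hom(T_i,T_j)=0$ for $i>j$, and (b) the existence of an HN-filtration as in \eqref{eq:HN filt} for an arbitrary nonzero object of $\nzero$.

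For (a), I would exploit the fact that the t-structures along $p$ are nested, $\hua{P}_0\supset\hua{P}_1\supset\cdots\supset\hua{P}_l$. Since $T_i$ is a simple of the heart $\h_{i-1}$, we have $T_i\in\hua{P}_{i-1}\subset\hua{P}_j$ whenever $i>j$. On the other hand, $T_j$ is the simple along which we forward tilt at step $j$, so $T_j[1]$ is a simple of $\h_j=\hua{P}_j\cap\hua{P}_j^\perp[1]$; in particular $T_j\in\hua{P}_j^\perp$. The defining property $\Hom(\hua{P}_j,\hua{P}_j^\perp)=0$ of a t-structure then immediately gives $\Hom(T_i,T_j)=0$, as required.

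For (b), Lemma~\ref{lem:costruction} already supplies the filtration when $M$ is indecomposable, so only the extension to arbitrary objects remains. The cleanest route is to iterate the torsion pairs of Lemma~\ref{lem:tp1}. Given a nonzero $M\in\nzero$, apply the torsion pair $(\hua{F}_{l-1},\hua{T}_{l-1})$ to obtain a short exact sequence
\[
0\to M^{(1)}\to M\to A_{j_1}\to 0
\]
with $A_{j_1}\in\hua{T}_{l-1}=\<T_l\>$ and $M^{(1)}\in\hua{F}_{l-1}=\<T_1,\ldots,T_{l-1}\>$; if $A_{j_1}=0$ take $j_1$ to be the largest index for which the corresponding quotient is nonzero. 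Then $M^{(1)}$ lies in $\<T_1,\ldots,T_{j_1-1}\>$, so we iterate the same procedure on $M^{(1)}$ using the torsion pair $(\hua{F}_{j_1-1},\hua{T}_{j_1-1})$, producing $M^{(2)}\subset M^{(1)}$ with quotient in $\<T_{j_2}\>$ for some $j_2<j_1$. The process terminates because the indices strictly decrease and are bounded below. Assembling the subobjects $0=M^{(m)}\subset M^{(m-1)}\subset\cdots\subset M^{(0)}=M$ in reverse yields the filtration \eqref{eq:HN filt} with the required ordering $j_m<\cdots<j_1$.

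There is no serious obstacle: part (a) is a one-line consequence of the t-structure axioms and the nesting of $\hua{P}_i$, and part (b) is a direct iteration of the existing Lemma~\ref{lem:tp1}. The only point demanding mild care is checking that the successive $M^{(i)}$ are well-defined subobjects (so that the resulting $M_i\hookrightarrow M_{i+1}$ really form a filtration), but this is automatic from iterating torsion-free kernels. The map $\dpath(Q)\to\HN(Q)$, $p\mapsto\oset{T_l,\ldots,T_1}$, is then well-defined, and injectivity (for the later claim of bijection) will presumably follow because the heart $\h_j$ is recovered from $\oset{T_l,\ldots,T_1}$ as the forward tilt of $\nzero$ at the torsion pair $(\hua{F}_j,\hua{T}_j)$ of Lemma~\ref{lem:tp1}.
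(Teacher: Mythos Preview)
Your argument for (a) is correct and is essentially the paper's, phrased one level up: the paper invokes the torsion pair of Lemma~\ref{lem:tp1}, noting that for $i>j$ one has $T_i\in\hua{T}_{i-1}$ and $T_j\in\hua{F}_{i-1}$, whence $\Hom(T_i,T_j)=0$; your version unpacks the same fact as $T_i\in\hua{P}_j$ and $T_j\in\hua{P}_j^\perp$. Neither route is shorter.

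For (b) the paper simply cites Lemma~\ref{lem:costruction}. You are right that the lemma is \emph{stated} only for indecomposable $M$, but its proof never uses indecomposability: any nonzero $M\in\nzero$ lies in $\hua{P}_0\setminus\hua{P}_l$ and the induction runs verbatim. So the extra work you supply is unnecessary. It also contains a slip: in the paper's convention the torsion pair sequence reads $0\to T\to M\to F\to 0$ with $T\in\hua{T}$ the \emph{subobject} and $F\in\hua{F}$ the quotient, so in your displayed sequence the roles of $M^{(1)}$ and $A_{j_1}$ are interchanged. Once corrected, $M^{(1)}$ is a quotient rather than a subobject, and the filtration must be assembled by pullback along the successive quotient maps (or, equivalently, built from the bottom as in the proof of Lemma~\ref{lem:costruction}). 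This is cosmetic and does not affect the conclusion.
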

\begin{proof}
Since $T_i\in\hua{F}_j$ and $T_j\in\hua{T}_j$ for $j>i$,
$\Hom(T_j,T_i)=0$ follows from Lemma~\ref{lem:tp1}.
Together with Lemma~\ref{lem:costruction}, the corollary follows.
\end{proof}

For the converse construction, we have the following lemma.

\begin{lemma}\label{lem:tp2}
Let $\oset{T_l, ... T_1}$ be an HN-stratum.
For $0\leq j\leq l$,
let $\hua{F}_j=\<T_1, ..., T_j\>$ and $\hua{T}_j=\<T_{j+1},...T_l\>$.
Then $(\hua{F}_j,\hua{T}_j)$ is a torsion pair in $\nzero$.
Let $\h_{j}=\tilt{(\nzero)}{\sharp}{}$ with respect to this torsion pair.
Then $T_{j+1}$ is a simple in $\h_j$ and $\h_{j+1}=\tilt{(\h_j)}{\sharp}{T_{j+1}}$.
\end{lemma}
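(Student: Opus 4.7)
The plan is to verify the three assertions of the lemma in sequence: first that $(\hua{F}_j,\hua{T}_j)$ is a torsion pair, next that $T_{j+1}$ is a simple of $\h_j$, and finally that $\h_{j+1}$ coincides with the simple forward tilt $\tilt{(\h_j)}{\sharp}{T_{j+1}}$. I expect the middle assertion to be the main obstacle.

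For the torsion pair, I will check that $\Hom(\hua{T}_j,\hua{F}_j)=0$ propagates from the defining property $\Hom(T_i,T_k)=0$ for $i>k$ by standard d\'evissage through the extension closures (Hom is left exact in each variable). For the torsion decomposition of a nonzero $M\in\nzero$, I will invoke the HN-filtration $0=M_0\subset M_1\subset\cdots\subset M_m=M$ of Definition~\ref{def:HN}, whose factors have strictly decreasing indices $j_1>\cdots>j_m$: locating the cutoff $k$ with $j_k>j\ge j_{k+1}$ (with obvious conventions when all indices lie on one side of $j$) and setting $T=M_k$, $F=M/M_k$ supplies the required sequence with $T\in\hua{T}_j$ and $F\in\hua{F}_j$. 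Lemma~\ref{lem:well known} then yields $\h_j=\<\hua{T}_j,\hua{F}_j[1]\>$.

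For the simplicity step, I will take a short exact sequence $0\to M'\to T_{j+1}\to Q\to 0$ in $\h_j$ with $M'\ne 0$ and aim to force $M'=T_{j+1}$. Applying the cohomology long exact sequence with respect to the t-structure $\nzero$, and using $T_{j+1}\in\nzero$, will give $H^{-1}(M')=0$ (so $M'\in\hua{T}_j\subset\nzero$) together with an exact sequence in $\nzero$
\[
0\to H^{-1}(Q)\to M'\to T_{j+1}\to H^0(Q)\to 0,
\]
with $H^{-1}(Q)\in\hua{F}_j$ and $H^0(Q)\in\hua{T}_j$. Let $K$ denote the image of $M'$ in $T_{j+1}$, which lies in $\hua{T}_j$ as a quotient of $M'$. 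The key step is to pin $K$ down to $\<T_{j+1}\>$: the bottom factor of $K$'s HN-filtration embeds some $T_r\hookrightarrow K\hookrightarrow T_{j+1}$ with $r\ge j+1$, and $\Hom(T_r,T_{j+1})=0$ for $r>j+1$ forces $r=j+1$, whence uniqueness collapses $K$'s HN-filtration into $\<T_{j+1}\>$. Invoking the Dynkin hypothesis — every indecomposable is a brick with vanishing self-extension, so $\<T_{j+1}\>=\mathrm{add}(T_{j+1})$ — the embedding $K\hookrightarrow T_{j+1}$ forces $K=0$ or $K=T_{j+1}$. The former gives $M'=H^{-1}(Q)\in\hua{T}_j\cap\hua{F}_j=0$, contradicting $M'\ne 0$. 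For $K=T_{j+1}$, I will run the analogous HN-argument on $M'$ itself: the top factor of $M'$'s HN-filtration must have index $j+1$ (again by Hom-vanishing), and the sub $M'_{m-1}$ then lies in $H^{-1}(Q)\in\hua{F}_j$ while already $M'_{m-1}\in\hua{T}_j$, so $m=1$; thus $M'\in\<T_{j+1}\>=\mathrm{add}(T_{j+1})$, and brickness closes with $M'\cong T_{j+1}$ and $H^{-1}(Q)=0$.

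Finally, having established $T_{j+1}\in\Sim\h_j$, I will observe that both $\h_{j+1}=\<\hua{T}_{j+1},\hua{F}_{j+1}[1]\>$ and $\tilt{(\h_j)}{\sharp}{T_{j+1}}$ are HRS-tilts whose torsion-pair data (in $\nzero$ and in $\h_j$ respectively) differ from that of $\h_j$ exactly by moving $T_{j+1}$ from the torsion class to the torsion-free class; Lemma~\ref{lem:well known} then identifies them. The hard part of the whole argument is the simplicity step, where the HN-filtration structure, the torsion-pair orthogonality, and the Dynkin brick property must be combined to rule out proper $\h_j$-subobjects of $T_{j+1}$.
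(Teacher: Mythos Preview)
Your argument is correct. The paper's own proof is literally the one line ``Similar to the proof of Lemma~\ref{lem:tp1}'', so your detailed treatment is considerably more complete than what the paper supplies. In particular, the simplicity of $T_{j+1}$ in $\h_j$---which you rightly flag as the main obstacle---is exactly the new ingredient that does not follow by symmetry from Lemma~\ref{lem:tp1} (there it came for free from the given path), and your HN-filtration analysis combined with the brick property of Dynkin indecomposables is the natural way to establish it. One small terminological wobble: when you speak of the ``bottom'' versus ``top'' factor of an HN-filtration, make sure the reader knows you mean the subobject $M_1=A_{j_1}$ (largest index) and the quotient $A_{j_m}$ (smallest index) respectively; the argument goes through either way once this is clear, since the Hom-vanishing forces the largest index appearing in $K$ down to $j+1$ and then strict monotonicity of the indices collapses the filtration.
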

\begin{proof}
Similar to the proof of Lemma~\ref{lem:tp1}.
\end{proof}

Combine the lemmas above, we have the following theorem.

\begin{theorem}\label{thm:HN}
The HN-strata in $\HN(Q)$ are precisely the directed paths in $\dpath(Q)$.
\end{theorem}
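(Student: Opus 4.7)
The plan is to package the preceding lemmas into mutually inverse maps between $\dpath(Q)$ and $\HN(Q)$. Corollary~\ref{cor:induce} already supplies one direction: a directed path $p = T_l \cdot \ldots \cdot T_1 \in \dpath(Q)$ gives an HN-stratum $\oset{T_l,\ldots,T_1}$. So the substantive task is to show that every HN-stratum arises this way, and that the two constructions invert each other.

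Given an HN-stratum $\oset{T_l,\ldots,T_1}\in\HN(Q)$, I would first invoke Lemma~\ref{lem:tp2} to obtain, for each $0\le j\le l$, a torsion pair $(\hua{F}_j,\hua{T}_j)$ in $\nzero$ with $\hua{F}_j=\<T_1,\ldots,T_j\>$ and $\hua{T}_j=\<T_{j+1},\ldots,T_l\>$, together with the associated forward tilts $\h_j$. At the two extremes $\hua{F}_0=0$ gives $\h_0=\nzero$, and $\hua{T}_l=0$ gives $\h_l=\<0,\nzero[1]\>=\nzero[1]$. The same Lemma~\ref{lem:tp2} tells us that $T_{j+1}$ is simple in $\h_j$ and that $\h_{j+1}=\tilt{(\h_j)}{\sharp}{T_{j+1}}$, so the sequence
\[
    \nzero=\h_0 \xrightarrow{T_1} \h_1 \xrightarrow{T_2} \cdots \xrightarrow{T_l} \h_l=\nzero[1]
\]
is a genuine directed path in $\dpath(Q)$. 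This defines the reverse map $\HN(Q)\to\dpath(Q)$.

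To finish, I would check that the two constructions are inverse to each other. Starting from a path $p=T_l\cdot\ldots\cdot T_1$, Lemma~\ref{lem:tp1} shows that the intermediate heart $\h_j$ of $p$ is itself the forward tilt of $\nzero$ along $(\<T_1,\ldots,T_j\>,\<T_{j+1},\ldots,T_l\>)$; so when we apply the HN-to-path construction to the HN-stratum built from $p$ via Corollary~\ref{cor:induce}, we recover the same hearts and hence the same path. Conversely, given an HN-stratum $\oset{T_l,\ldots,T_1}$ and the path it produces, Corollary~\ref{cor:induce} applied to that path reproduces $\oset{T_l,\ldots,T_1}$ on the nose, since the labels of the edges are by construction the $T_j$'s in order.

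I do not expect a real obstacle here: all the heavy lifting (existence and uniqueness of HN-filtrations, that the $\hua{F}_j,\hua{T}_j$ form torsion pairs, and that the tilts are simple tilts with label $T_{j+1}$) has already been done in Lemmas~\ref{lem:costruction}, \ref{lem:tp1}, \ref{lem:tp2} and Corollary~\ref{cor:induce}. The only point requiring minor care is verifying that the endpoint of the constructed path really is $\nzero[1]$ (one has to notice that $\hua{T}_l=0$ because every indecomposable of $\nzero$ admits an HN-filtration into the $\<T_j\>$'s, so the zero torsion part forces $\h_l=\nzero[1]$); once that is noted, the theorem follows by simply combining the two directions.
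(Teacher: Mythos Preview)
Your proposal is correct and follows essentially the same approach as the paper: the paper's proof is literally the one-line ``Combine the lemmas above,'' and you have spelled out exactly that combination, using Corollary~\ref{cor:induce} for the map $\dpath(Q)\to\HN(Q)$ and Lemma~\ref{lem:tp2} for its inverse. Your extra care about the endpoint $\h_l=\nzero[1]$ is fine but can be shortened: since by definition $\hua{T}_l=\langle\emptyset\rangle=0$, the torsion-pair axiom immediately forces $\hua{F}_l=\nzero$ and hence $\h_l=\nzero[1]$.
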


We will not distinguish $\HN(Q)$ and $\dpath(Q)$ from now on.

\begin{corollary}\label{cor:shortest}
For any shortest path $p$ in $\dpath(Q)$,
the set of labels of its edges are precisely $\Sim\nzero$.
\end{corollary}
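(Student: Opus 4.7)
The plan is to combine the length estimate from Lemma~\ref{lem:path} with the HN-strata interpretation from Theorem~\ref{thm:HN}, and then exploit simplicity.

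First I would note that by Lemma~\ref{lem:path} we have $\dis(\nzero,\nzero[1])\geq n$ with equality, since $\nzero$ is standard. Hence a shortest path $p\in\dpath(Q)$ has exactly $n$ edges, say with labels $T_1,\ldots,T_n$ corresponding to the sequence of simple tilts $\nzero=\h_0\xrightarrow{T_1}\h_1\to\cdots\xrightarrow{T_n}\h_n=\nzero[1]$. By Theorem~\ref{thm:HN}, $\oset{T_n,\ldots,T_1}$ is an HN-stratum for $\nzero$, consisting of $n$ indecomposable objects of $\nzero$.

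Next I would fix any simple $S\in\Sim\nzero$ and apply the HN-property to it. The HN-filtration \eqref{eq:HN filt} of $S$ has some length $m\geq 1$ with factors $A_{j_i}\in\<T_{j_i}\>$ and $j_m<\cdots<j_1$. The last step of the filtration is a short exact sequence $0\to M_{m-1}\to S\to A_{j_m}\to 0$ in $\nzero$, with $A_{j_m}\neq 0$. Since $S$ is simple, the only possibility is $m=1$, so $S=A_{j_1}\in\<T_{j_1}\>$. But any nonzero object of $\<T_{j_1}\>$ has $T_{j_1}$ as a subobject (being an iterated extension of $T_{j_1}$ by itself), so simplicity of $S$ forces $S=T_{j_1}$. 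Therefore $\Sim\nzero\subseteq\{T_1,\ldots,T_n\}$, and since both sets have cardinality $n$, they coincide.

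The only subtle point, and where I would be most careful, is the step extracting $S=T_{j_1}$ from $S\in\<T_{j_1}\>$: one must justify that an iterated extension of a single indecomposable $T$ always contains $T$ as a subobject, so that simplicity of $S$ collapses the extension to length one. This is elementary in an abelian category (induct on the length of the filtration), but it is the linchpin that makes the argument work. No estimation on path length beyond $\dis(\nzero,\nzero[1])=n$ is needed, because the HN-strata characterization already packages all the structural information.
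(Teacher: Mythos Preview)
Your proof is correct and follows essentially the same approach as the paper: both use Theorem~\ref{thm:HN} to regard the labels as an HN-stratum, observe that the HN-filtration of a simple $S\in\Sim\nzero$ must collapse to a single factor equal to $S$ itself, and combine this with the length bound from Lemma~\ref{lem:path}. The only difference is cosmetic ordering---the paper first deduces that every simple occurs among the labels (giving length $\geq n$) and then invokes Lemma~\ref{lem:path} for equality, while you first fix the length at $n$ and then argue the inclusion $\Sim\nzero\subseteq\{T_1,\ldots,T_n\}$; your extra care in extracting $S=T_{j_1}$ from $S\in\<T_{j_1}\>$ is a point the paper leaves implicit.
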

\begin{proof}
The HN-filtration of a simple in $\nzero$ (with respect to $p$) can only
have one factor, i.e. itself.
Hence any simple of $\nzero$ appears in
an HN-stratum, and in particular, the labels of edges of any path $p$.
Thus the length of $p$ is at least $n$.
By Lemma~\ref{lem:path}, the length of a shortest path $p$ is exactly $n$
and hence the corollary follows.
\end{proof}

\subsection{Slicing interpretation}
Denote by $\sli(\D)$ the set of all slicing of a triangulated category $\D$
(cf. Definition~\ref{def:stab}).
We say a slicing $\hua{S}$ of $\D(Q)$ is discrete if
the abelian category $\hua{S}(\phi)$ is either zero or contains exactly one simple
for any $\phi\in\kong{R}$.
We say a heart $\h$ is in a slicing $\hua{S}$
if $\h=\hua{S}[\phi,\phi+1)$ or $\h=\hua{S}(\phi,\phi+1]$ for some $\phi\in\kong{R}$.
Let $\sli^*(\D(Q), \h)$ be the set of all discrete slicings of $\D(Q)$
that contain $\h$.

\begin{definition}
Let $\hua{S}_1$ and $\hua{S}_2$ in $\sli(\D)$.
If there is a monotonic (strictly) increasing function $\kong{R}\to\kong{R}$ such that
$\hua{S}_1(\phi)=\hua{S}_2(f(\phi))$,
then we say that the slicing $\hua{S}_1$ is homotopic ($\sim$) to $\hua{S}_2$.
\end{definition}

Now we can describe the relation between directed paths/HN-strata and slicings.

\begin{proposition}
There is a canonical bijection $\sli^*(\D(Q), \nzero)/_{\sim}\to\HN(Q)$.
\end{proposition}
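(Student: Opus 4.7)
The plan is to construct maps $\Phi\colon\sli^*(\D(Q),\nzero)/_\sim\to\HN(Q)$ and $\Psi\colon\HN(Q)\to\sli^*(\D(Q),\nzero)/_\sim$ explicitly, verify that each descends to homotopy classes, and then check they are mutually inverse. Throughout I may assume without loss of generality that a discrete slicing $\hua{S}\in\sli^*(\D(Q),\nzero)$ satisfies $\nzero=\hua{S}[\phi_0,\phi_0+1)$ (the half-open interval $(\phi_0,\phi_0+1]$ case is handled symmetrically, and the choice of $\phi_0$ is absorbed by $\sim$).

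For $\Phi$, given such $\hua{S}$, list the phases $\phi_0\leq\phi_1<\phi_2<\dots<\phi_l<\phi_0+1$ with $\hua{S}(\phi_i)\neq0$ and let $T_i$ denote the unique simple generating $\hua{S}(\phi_i)=\<T_i\>$. Each $T_i$ lies in $\nzero$ since its phase is in $[\phi_0,\phi_0+1)$. I claim $\Phi(\hua{S})=\oset{T_l,\dots,T_1}$ is an HN-stratum: the vanishing $\Hom(T_i,T_j)=0$ for $i>j$ is exactly slicing axiom~$3^\circ$, and the HN-filtration of any $M\in\nzero$ required by Definition~\ref{def:HN} is precisely the slicing filtration of $M$, whose factors must all have phase in $[\phi_0,\phi_0+1)$ (otherwise some factor would lie in $\nzero[k]$ for $k\neq0$, contradicting $M\in\nzero$). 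A monotonic reparametrisation of $\kong{R}$ preserves the ordered list of nonempty phases inside $[\phi_0,\phi_0+1)$ and their simples, so $\Phi$ factors through $\sim$.

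For $\Psi$, given $\oset{T_l,\dots,T_1}$, choose any $0\leq\phi_1<\phi_2<\dots<\phi_l<1$ and set
\[
\hua{S}(\phi)=\begin{cases}\<T_i\>[k]&\text{if }\phi=k+\phi_i,\ k\in\kong{Z},\ 1\leq i\leq l,\\ 0&\text{otherwise.}\end{cases}
\]
Axiom~$2^\circ$ holds by construction. For axiom~$3^\circ$, take $A_s\in\hua{S}(k_s+\phi_{i_s})$ with $k_1+\phi_{i_1}>k_2+\phi_{i_2}$: if $k_1>k_2$, then $\Hom(A_1,A_2)$ is an $\Ext$ in negative degree between objects of $\nzero$ and hence vanishes; if $k_1=k_2$ then $i_1>i_2$, reducing to the Hom-vanishing built into the HN-stratum. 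For axiom~$4^\circ$, take any $E\in\D(Q)$, apply the canonical t-structure filtration~\eqref{eq:canonfilt} to obtain factors $H_i[k_i]$ with $k_1>\dots>k_m$ and $H_i\in\nzero$, then refine each $H_i$ by its HN-stratum filtration~\eqref{eq:HN filt} to get factors in $\<T_{j}\>[k_i]$; since all $\phi_j\in[0,1)$ and the $k_i$ are strictly decreasing integers, the resulting combined phases $k_i+\phi_j$ form a strictly decreasing sequence, yielding the slicing HN-filtration. Different choices of $\{\phi_i\}$ produce homotopic slicings via the order-preserving map $\phi_i\mapsto\phi_i'$ extended piecewise-linearly, so $\Psi$ is well defined into the quotient.

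Finally, $\Phi\circ\Psi$ returns the original HN-stratum on the nose, and $\Psi\circ\Phi$ reconstructs $\hua{S}$ on $[\phi_0,\phi_0+1)$ and hence globally by axiom~$2^\circ$, matching $\hua{S}$ up to the homotopy sending $\phi_i\mapsto\phi_i-\phi_0$. The main technical hurdle I anticipate is verifying axiom~$4^\circ$ in the definition of $\Psi$: one must check that concatenating the canonical filtration of $E$ with the HN-stratum refinement of each $H_i$ actually produces a filtration in $\D(Q)$ whose successive cones have the claimed phases in strictly decreasing order. The key observation making this work is the strict gap $k_i-k_{i+1}\geq1$, which dominates any rearrangement of the fractional parts $\phi_j\in[0,1)$.
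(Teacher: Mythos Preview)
Your proposal is correct and follows essentially the same approach as the paper's own proof: both construct the forward map by reading off the simples of a discrete slicing in phase order on the interval carrying $\nzero$, and the inverse by placing the objects of an HN-stratum at chosen phases in $[0,1)$ and extending by shift. The paper's version is much terser (it simply asserts the constructions and that two slicings map to the same stratum exactly when they are homotopic), while you have spelled out the verification of the slicing axioms~$2^\circ$--$4^\circ$ and the well-definedness on homotopy classes; in particular your treatment of axiom~$4^\circ$ via refining the canonical filtration by the HN-stratum filtration, using the gap $k_i-k_{i+1}\geq1$ to guarantee strictly decreasing phases, is exactly the right argument and resolves the hurdle you flagged.
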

\begin{proof}
Let $\hua{S}\in \sli^*(\D(Q), \nzero)$ and
suppose $\nzero=\hua{S}(I)$ for some interval $I$ with $|I|=1$.
Then it induces an HN-stratum by taking the collection of objects which are simple
in $\hua{S}(\phi)$ for $\phi\in I$ with decreasing order.
On the other hand,
an HN stratum $\oset{K_l, ..., K_1}$ is induced
by the slicing
\[
    \{\hua{P}(m+\frac{j}{l})=\<K_j[m]\> \mid j=1,..l \text{ and } m\in\kong{Z} \}.
\]
Hence we have a surjection $\sli^*(\D(Q), \nzero)\to\HN(Q)$ while the condition that
$\hua{S}_1$ and $\hua{S}_2$ map to the same HN-stratum
is exactly the homotopy equivalence.
\end{proof}

\subsection{Total stability}
Recall that we have the notion of a stability function on an abelian category
(Definition~\ref{def:sf}).
We call a stability function on $\hua{A}$ \emph{totally stable}
if every indecomposable is stable.
Reineke made the following conjecture.
\begin{conjecture}\cite{R2}\label{conj:R}
Let $Q$ be a Dynkin quiver.
There exists a totally stable stability function on $\nzero$.
\end{conjecture}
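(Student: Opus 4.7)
The plan is to combine Proposition~\ref{pp:longest} with Theorem~\ref{thm:HN} to reduce the conjecture to realising a specific maximal HN-stratum by an honest stability function on $\nzero$. By Proposition~\ref{pp:longest}, I would fix a longest directed path $p\in\dpath(\nzero,\nzero[1])$ of length $l=\#\Ind\nzero=n\cdot h_Q/2$, whose vertices are all standard hearts and whose edges are APR-tiltings at sinks. By Corollary~\ref{cor:induce} the sequence of labels forms an HN-stratum $\oset{T_l,\ldots,T_1}$; the maximality of $l$ forces each $T_i$ to be an indecomposable of $\nzero$, with every isomorphism class appearing exactly once, and the Hom-vanishing $\Hom(T_i,T_j)=0$ for $i>j$ is automatic.

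Second, I would check that any stability function $Z\colon\K(\nzero)\to\kong{C}$ realising the ordering $\mu_Z(T_1)<\mu_Z(T_2)<\cdots<\mu_Z(T_l)$ in $(0,1)$ is in fact totally stable. For each $i$, the only indecomposable of phase $\mu_Z(T_i)$ is $T_i$ itself, so $T_i$ is the unique simple of the (necessarily semisimple) abelian subcategory $\hua{P}(\mu_Z(T_i))$ and is therefore stable rather than merely semistable. The required HN-filtration of an arbitrary $M\in\nzero$ is produced by Lemma~\ref{lem:costruction} applied to the maximal stratum and then refined by splitting each factor $M_i/M_{i-1}\in\<T_{j_i}\>$ into copies of $T_{j_i}$.

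Third, one needs to construct such a $Z$. A stability function on $\nzero$ is determined by $Z(S_1),\ldots,Z(S_n)\in H$ (Proposition~\ref{pp:ss}), and the locus $\cub_p\subset H^n$ of functions realising the $T_\bullet$-ordering is open. Non-emptiness is the content of the conjecture. The natural approach is inductive along $p$: choose target phases $\mu_1<\cdots<\mu_l$ compressed into a small arc, and propagate them through the APR-tilting structure of $p$, at the $i$-th step using that $T_i$ is a simple projective of a standard heart $\h_{i-1}\cong\mod\k Q_{i-1}$ to update $Z$ continuously while preserving the ordering of the phases already assigned to indecomposables visited at earlier steps.

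The main obstacle is consistency: the $l$ phase constraints live in a $2n$-dimensional real parameter space, so they are overdetermined, and one must exploit the fact that the dimension vectors of indecomposables are positive roots in a root system of rank $n$. Concretely, one needs the system
\[
    \arg\Bigl(\sum_k d_k(T_i)\,Z(S_k)\Bigr)
    <\arg\Bigl(\sum_k d_k(T_{i+1})\,Z(S_k)\Bigr),\qquad i=1,\ldots,l-1,
\]
to be simultaneously satisfiable in $H^n$. A genericity argument shows that the loci where two phases collide have real codimension one, but ruling out that every open chamber violates the $T_\bullet$-order appears to require the full AR-combinatorics encoded by the position function $\pf$ of Definition/Lemma~\ref{def:pf} together with Lemma~\ref{lem:homs}, which force $\pf(M)<\pf(L)$ whenever $\Hom(M,L)\neq 0$ and thereby give the monotonicity needed to align $\arg Z$ with the order on $p$. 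Making this last alignment watertight, in particular showing that no Euler-form collision obstructs it outside type $A$, is where I expect the bulk of the difficulty to lie.
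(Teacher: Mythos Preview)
Your outline differs substantially from what the paper actually does with this statement. The paper does not prove Conjecture~\ref{conj:R} in full; it attributes that to Hille--Juteau (unpublished) and instead proves only the weaker Proposition~\ref{pp:R}, that $\D(Q)$ admits a totally stable stability condition. That proof is entirely by explicit construction, case by case over the Dynkin types: for $A_n$ it quotes Reineke's linear central charge $Z(S_j)=-j+\mathbf{i}$; for $D_n$ it writes down a one-parameter family and lets the parameter go to infinity; for $E_{6,7,8}$ it simply lists numerical central charges found by computer search with Keller's quiver-mutation program. There is no structural or uniform argument, and the orientation of $Q$ is fixed in each case, which is why the paper flags the result as ``slightly weak'' compared to the conjecture.

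Your route through Proposition~\ref{pp:longest} and Theorem~\ref{thm:HN} is more conceptual, and your step~2 is correct: the torsion-pair description in Lemma~\ref{lem:tp2} forces any proper subobject $L\subsetneq T_i$ to lie in $\hua{F}_i=\<T_1,\ldots,T_i\>$, so $Z(L)$ is a positive combination of the $Z(T_j)$ with $j\leq i$ and hence has strictly smaller phase. The gap is exactly where you locate it, but it is a genuine obstruction rather than a technicality. Counterexample~\ref{cex} in the very next subsection exhibits, for $D_4$, a longest path in $\dpath(Q)$---necessarily consisting of APR-tiltings at sinks, by the proof of Proposition~\ref{pp:longest}---for which your system of phase inequalities is provably inconsistent: no stability function on $\nzero$ induces it. So you cannot simply ``fix a longest directed path'' and then solve for $Z$; the choice of path is the whole content of the problem, and nothing in your sketch (the position function, Lemma~\ref{lem:homs}, or inductive propagation along APR-tilts) distinguishes the inducible longest paths from the non-inducible ones. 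That is precisely why the paper retreats to explicit examples and leaves the general inducibility question as an open conjecture at the end of Section~\ref{sub:FD}.
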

This was first proved by Hille-Juteau
(unpublished, see the comments after \cite[Corollary~1.7]{K6}).
We say a stability condition on a triangulated category
is \emph{totally stable} if any indecomposable is stable.
Let $\sigma=(Z,\hua{P})$ be a totally stable stability condition.
Then it will induce a totally stable stability function $Z$ on
any abelian category $\hua{P}(I)$,
for any half open half closed interval $I\subset\kong{R}$ with length $1$;
in particular, on its heart.
In the case of $\D(Q)$,
a totally stable stability condition
$\sigma$ induces a longest path in its heart $\h$ and forces $\h$ to be standard.
On the other hand, a totally stable stability function on any standard heart in $\D(Q)$
will induce a stability condition on $\D(Q)$,
which is also totally stable.

Now we give explicit examples to prove
the existence of a totally stable stability condition on $\D(Q)$,
which is a slightly weak version of Conjecture~\ref{conj:R}
because orientation matters.

\begin{proposition}\label{pp:R}
Let $Q$ be a Dynkin quiver.
There exists a totally stable stability condition on $\D(Q)$.
\end{proposition}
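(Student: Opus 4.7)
The plan is to apply Proposition~\ref{pp:ss} to reduce to constructing a totally stable stability function $Z$ on the canonical heart $\nzero$ for a suitably chosen orientation of $Q$. Since $\nzero$ is finite of finite length, the HN-property of any such $Z$ is automatic, and $Z$ is specified by its values $Z(S_i)\in H$ on the simples. The task thereby becomes: choose $Z(S_1),\dots,Z(S_n)$ so that, for every indecomposable $M\in\mod\k Q$ and every proper nonzero subobject $L\subset M$, one has $\mu_Z(L)<\mu_Z(M)$.

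My approach is perturbative. Fix a base phase $\phi_0\in(0,1)$, a small parameter $\epsilon>0$, and real constants $c_1,\dots,c_n$, and set $Z(S_i)=\exp(\mathbf{i}\pi(\phi_0+\epsilon c_i))$. Then for any nonzero $M$ with dimension vector $(d_1,\dots,d_n)$ a direct expansion gives
\[
\mu_Z(M)=\phi_0+\epsilon\,\bar c(M)+O(\epsilon^2),\qquad
\bar c(M)=\frac{\sum_i d_i c_i}{\sum_i d_i},
\]
so, provided the $c_i$ are in sufficiently general position, the stability inequality $\mu_Z(L)<\mu_Z(M)$ is, for small enough $\epsilon$, equivalent to the combinatorial inequality $\bar c(L)<\bar c(M)$ on sub-dimension-vectors. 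The problem is thereby reduced to choosing real numbers $c_1,\dots,c_n$ such that, for every positive root $\alpha$ of $Q$ (which indexes an indecomposable $M_\alpha$ by Gabriel's theorem) and every sub-dimension-vector $\beta$ realised by an actual subrepresentation of $M_\alpha$, the weighted-average inequality $\bar c(\beta)<\bar c(\alpha)$ holds.

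Concretely, for each Dynkin type I would exhibit an explicit orientation together with constants $c_i$ and verify the combinatorial inequality directly. For type $A_n$ with the linear orientation $1\to 2\to\cdots\to n$ and any strictly decreasing choice $c_1>c_2>\cdots>c_n$, the submodules of an interval module $M_{[i,j]}$ are precisely the $M_{[k,j]}$ with $k>i$, and dropping the leading (largest-$c$) terms strictly decreases the average, so the stability inequality follows by inspection. For types $D_n$ and $E_{6,7,8}$ I would pick an orientation in which the branching vertex is a sink, and take the $c_i$ compatibly with a linear extension of the resulting partial order on vertices, with the value at the branching vertex set much smaller than those at the leaves and chosen symmetrically when possible.

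The main obstacle is the analysis of the ``thick'' indecomposables in types $D$ and $E$ --- those whose dimension vectors take values $\geq 2$ at the branching vertex, such as the highest root $2\alpha_1+\alpha_2+\alpha_3+\alpha_4$ in $D_4$ --- whose submodule lattices involve choices of one-dimensional subspaces of the multi-dimensional component and therefore include submodules beyond those supported on subquivers. The sink choice of orientation kills the most dangerous among them, and the remaining inequalities are handled by the prescribed ordering of the $c_i$, leaving a finite (explicitly enumerable) list of checks that conclude the construction in each type.
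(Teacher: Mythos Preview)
Your perturbative reduction to the weighted-average inequality $\bar c(\beta)<\bar c(\alpha)$ is correct and is a clean way to organise the problem; the paper does not set things up this way, but the underlying mechanism is the same. In type~$A_n$ your argument coincides with the paper's: the paper takes the linear orientation and $Z(S_j)=-j+\mathbf{i}$, which is exactly your ``strictly monotone $c_j$'' choice, and your identification of the submodules of $M_{[i,j]}$ as the $M_{[k,j]}$ with $k>i$ gives the same one-line verification.

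Where the two diverge is in types~$D$ and~$E$. The paper does not argue abstractly: for $D_n$ it fixes a specific (mixed) orientation with the branching vertex neither a source nor a sink, writes down an explicit central charge depending on a large real parameter $t$, computes $Z(\tau^j S_n)$ and $Z(\tau^{j-1} I_1)$ in closed form, and then declares the remaining checks straightforward for $t\gg 1$; for $E_{6,7,8}$ it abandons any structural argument and simply exhibits computer-verified numerical central charges. Your proposal instead asserts that a sink orientation at the branching vertex together with a suitable ordering of the $c_i$ will work, but you neither name the $c_i$ nor carry out the submodule analysis for the thick indecomposables. For $E_8$ especially, where the highest root has a coefficient $6$ and the indecomposable in question has a genuinely intricate submodule lattice, ``a finite explicitly enumerable list of checks'' is a description of the problem rather than a proof. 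So your outline is sound in principle, and arguably more conceptual than the paper's ad hoc constructions, but as written it is incomplete in exactly the places where the paper falls back on explicit data.
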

\begin{proof}
We treat the cases $A,D$ and $E$ separately.

For $A_n$-type, we use \cite[Example~A, \S~2]{R}.
Choose the orientation of $Q$ as
\[
  \xymatrix{  n \ar[r]& n-1 \ar[r]& \cdots \ar[r]& 1
}\]
Consider the stability function $Z$ on $\nzero$ defined by $Z(S_j)=-j+\mathbf{i}$.
Then $Z$ induces a totally stable stability condition on $\D(Q)$.

For $D_n$-type, choose the orientation of $Q$ as
\[\xymatrix@R=0.1pc{
        &&&& n-1 \\
        n-2 \ar@{<-}[r]& n-3 \ar@{<-}[r]& \cdots \ar@{<-}[r]& 1 \ar@{<-}[dr]\ar@{<-}[ur]\\
        &&&& n &}
\]
Consider the stability function $Z$ on $\nzero$ defined by
\begin{gather*}
 \left\{
  \begin{array}{l}
    Z(S_j)=j+\mathbf{i},\quad j=1,...,n-2,\\
    Z(S_{n-1})=Z(S_n)=t\mathbf{i},
  \end{array}
 \right.
\end{gather*}
where $t$ is a positive real number.
A simple calculation shows that $Z( \tau^{j} S_n)=Z( \tau^{j} S_{n-1})$ and
\begin{gather*}
    Z( \tau^{j-1} I_1)=2Z( \tau^{j} S_n)-{j}=j^2+(2t+2j-1)\mathbf{i},
    \quad 1\leq j\leq n-2.
\end{gather*}
Then the central change of all other indecomposables can be easily calculated.
So it is straightforward to check that
$Z$ induces a totally stable stability condition on $\D(Q)$
if $t\gg1$.

For the exceptional cases,
we use Keller's quiver mutation program \cite{Kqm}
to produce explicit examples of totally stable stability conditions for $E_{6,7,8}$.
Choose the orientation of $E_{6,7,8}$ as follows
\[\xymatrix@R=.5pc{
    6\\ &4 \ar@{<-}[ul]\\ 1 \ar@{<-}[ur] \ar@{<-}[r] \ar@{<-}[dr] & 3 \\ &2 \ar@{<-}[dl] \\ 5
}
\quad
\xymatrix@R=.5pc{
    3\\ &2 \ar@{<-}[ul]\\ 1 \ar@{<-}[ur] \ar@{<-}[r] \ar@{<-}[dr] & 4 \\ &5 \ar@{<-}[dl] \\ 6 \ar@{<-}[dr] \\ & 7
}
\quad
\xymatrix@R=.5pc{
    1\ar@{<-}[dr]\\&4\\2\ar@{<-}[ur]\ar@{<-}[r]\ar@{<-}[dr]&5\\&6\\3\ar@{<-}[ur]\ar@{<-}[dr]\\&8\\7\ar@{<-}[ur]
}
\]
and we have following totally stable stability functions respectively:
\begin{equation*}
 \left\{
  \begin{array}{l}
    Z(S_1)= 258+9 \mathbf{i}\\
    Z(S_2)=-53 +32 \mathbf{i}\\
    Z(S_3)=-150 +36 \mathbf{i}\\
    Z(S_4)=-75 +33 \mathbf{i}\\
    Z(S_5)=-99 +64 \mathbf{i}\\
    Z(S_6)=-101 +10 \mathbf{i}
  \\
  \end{array}
 \right.
 \left\{
  \begin{array}{l}
    Z(S_1)=165 +10 \mathbf{i}\\
    Z(S_2)=-22 +33 \mathbf{i}\\
    Z(S_3)=-35 +36 \mathbf{i}\\
    Z(S_4)=-63 +37 \mathbf{i}\\
    Z(S_5)=-14 +28 \mathbf{i}\\
    Z(S_6)=-27 +21 \mathbf{i}\\
    Z(S_7)=-39 +24 \mathbf{i}
  \end{array}
 \right.
 \left\{
  \begin{array}{l}
    Z(S_1)= 47+ 16\mathbf{i}\\
    Z(S_2)= 135+ 9\mathbf{i}\\
    Z(S_3)= 93+ 11\mathbf{i}\\
    Z(S_4)= -66+ 40\mathbf{i}\\
    Z(S_5)= -57+ 32\mathbf{i}\\
    Z(S_6)= -92+ 57\mathbf{i}\\
    Z(S_7)= 42+ 25\mathbf{i}\\
    Z(S_8)= -45+ 45\mathbf{i}
  \end{array}
 \right.
\end{equation*}
where $S_i$ is the simple corresponding to vertex $i$.
Figure~\ref{pic:E6} is
the AR-quiver of the $E_{6}$ quiver under such a totally stable function, where
the bullets are simples/the origin and the stars are other indecomposables.
\begin{figure}\centering
\begin{xy} 0;<.8pt,0pt>:<0pt,-.8pt>::
(478,286) *+{\bullet} ="0",
(403,253) *+{\star} ="1",
(328,250) *+{\star} ="2",
(425,254) *+{\star} ="3",
(302,243) *+{\star} ="4",
(326,198) *+{\star} ="5",
(458,176) *+{\star} ="6",
(357,166) *+{\star} ="7",
(350,221) *+{\star} ="8",
(359,120) *+{\star} ="9",
(275,218) *+{\star} ="10",
(253,217) *+{\star} ="11",
(388,36) *+{\star} ="12",
(306,88) *+{\star} ="13",
(258,110) *+{\star} ="14",
(282,133) *+{\star} ="15",
(251,165) *+{\star} ="16",
(249,211) *+{\star} ="17",
(238,0) *+{\star} ="18",
(183,77) *+{\star} ="19",
(200,185) *+{\star} ="20",
(205,78) *+{\star} ="21",
(152,207) *+{\star} ="22",
(176,162) *+{\star} ="23",
(130,45) *+{\star} ="24",
(99,175) *+{\star} ="25",
(150,155) *+{\star} ="26",
(101,129) *+{\star} ="27",
(167,263) *+{\bullet} ="28",
(145,262) *+{\bullet} ="29",
(0,119) *+{\star} ="30",
(68,207) *+{\star} ="31",
(70,259) *+{\bullet} ="32",
(44,252) *+{\star} ="33",
(121,239) *+{\bullet} ="34",
(119,285) *+{\bullet} ="35",
(220,295) *+{\bullet} ="36",
(-10,295), {\ar(478,295)},
"0", {\ar"1"},
"0", {\ar"2"},
"0", {\ar"3"},
"1", {\ar"4"},
"1", {\ar"6"},
"2", {\ar"6"},
"3", {\ar"5"},
"3", {\ar"6"},
"4", {\ar"7"},
"5", {\ar"9"},
"6", {\ar"7"},
"6", {\ar"8"},
"6", {\ar"9"},
"7", {\ar"10"},
"7", {\ar"12"},
"8", {\ar"12"},
"9", {\ar"11"},
"9", {\ar"12"},
"10", {\ar"13"},
"11", {\ar"15"},
"12", {\ar"13"},
"12", {\ar"14"},
"12", {\ar"15"},
"13", {\ar"16"},
"13", {\ar"18"},
"14", {\ar"18"},
"15", {\ar"17"},
"15", {\ar"18"},
"16", {\ar"19"},
"17", {\ar"21"},
"18", {\ar"19"},
"18", {\ar"20"},
"18", {\ar"21"},
"19", {\ar"22"},
"19", {\ar"24"},
"20", {\ar"24"},
"21", {\ar"23"},
"21", {\ar"24"},
"22", {\ar"25"},
"23", {\ar"27"},
"24", {\ar"25"},
"24", {\ar"26"},
"24", {\ar"27"},
"25", {\ar"28"},
"25", {\ar"30"},
"26", {\ar"30"},
"27", {\ar"29"},
"27", {\ar"30"},
"28", {\ar"31"},
"29", {\ar"33"},
"30", {\ar"31"},
"30", {\ar"32"},
"30", {\ar"33"},
"31", {\ar"34"},
"33", {\ar"35"},
\end{xy}
  \caption{The AR-quiver of $\AR(\nzero)$ $E_6$-type
    under a totally stable stability function}\label{pic:E6}
\end{figure}
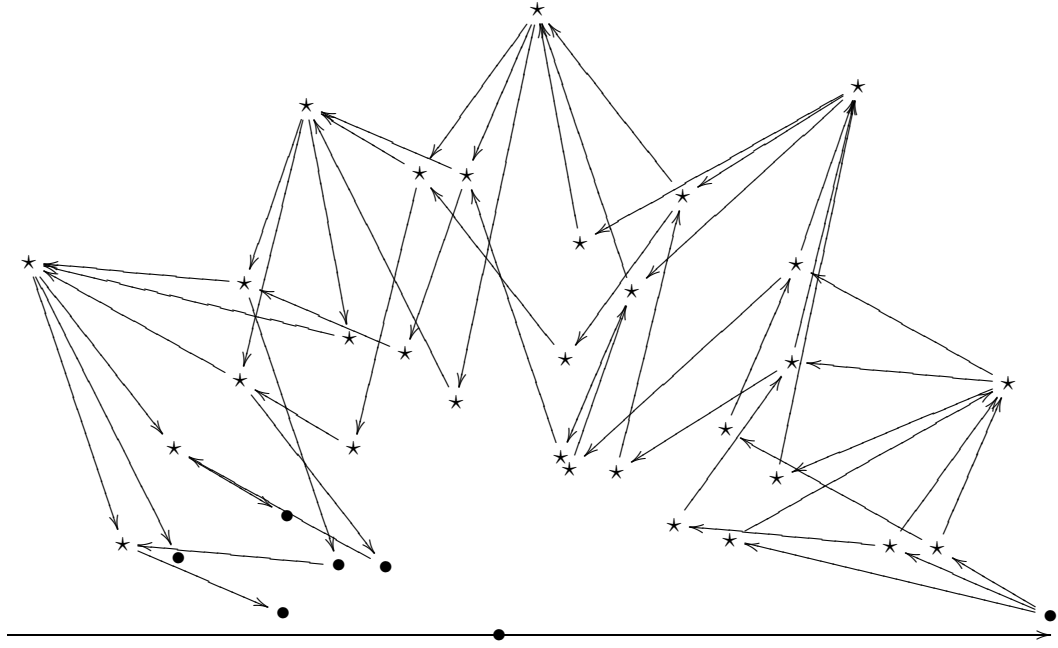

\end{proof}

\subsection{Inducing directed paths}\label{sub:FD}
We call a stability function \emph{discrete}, if $\mu_Z$ is injective
when restricted to the stable indecomposables.
\begin{proposition}\cite{King}\label{pp:King}
Let $Z:\K(\nzero)\to\kong{C}$ be a discrete stability function.
Then the collection of its stable indecomposables in the order of decreasing phase
is an HN-stratum of $\nzero$.
\end{proposition}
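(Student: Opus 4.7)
The plan is to list the stable indecomposables as $T_1,\dots,T_l$ in order of \emph{strictly increasing} phase (possible by discreteness of $Z$ and finiteness of $\nzero$ for $Q$ Dynkin), and then verify the two clauses of Definition~\ref{def:HN}: the Hom-vanishing condition and the existence of the filtration with factors in $\langle T_{j_i}\rangle$.

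For the Hom-vanishing, suppose $i>j$, so $\mu_Z(T_i)>\mu_Z(T_j)$, and assume for contradiction that $f\colon T_i\to T_j$ is nonzero with image $I$. Since $T_i$ is (semi)stable and $I$ is a nonzero quotient of $T_i$, the see-saw property gives $\mu_Z(I)\geq\mu_Z(T_i)$; since $T_j$ is (semi)stable and $I$ is a nonzero subobject of $T_j$, we get $\mu_Z(I)\leq\mu_Z(T_j)$. Combining, $\mu_Z(T_i)\leq\mu_Z(T_j)$, contradicting the ordering. Hence $\Hom(T_i,T_j)=0$.

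For the HN-filtration of an arbitrary $M\in\nzero$, first observe that $\nzero=\mod\k Q$ is of finite length (since $Q$ is Dynkin), so the stability function $Z$ automatically enjoys the usual HN property producing semistable factors $L_1,\dots,L_k$ of strictly decreasing phase. Each $L_t$ lies in the semistable abelian subcategory $\hua{P}(\mu_Z(L_t))$, whose simple objects are precisely the stable objects of that phase. By discreteness of $Z$, there is at most one such stable indecomposable, say $T_{j_t}$; because $\hua{P}(\mu_Z(L_t))$ inherits finite length from $\nzero$, the Jordan--H\"older theorem inside this subcategory then says $L_t\in\langle T_{j_t}\rangle$. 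Refining the semistable filtration this way yields the required HN-filtration, and the indices satisfy $j_m<\dots<j_1$ because the phases are strictly decreasing from bottom to top and our labeling makes the phase of $T_i$ strictly increasing in $i$.

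The only subtle step is the second clause, where one needs to know that $\hua{P}(\phi)$ is a finite-length abelian subcategory with a unique simple, so that each semistable $L_t$ is genuinely an iterated self-extension of the single stable $T_{j_t}$; this is where discreteness is essential and is the main thing to check carefully. Everything else is either a direct consequence of the definitions (Hom-vanishing from see-saw) or a standard fact about finite-length hearts for Dynkin quivers (existence of the HN filtration into semistable pieces).
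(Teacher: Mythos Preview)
The paper does not supply a proof of this proposition; it is simply quoted with a citation to \cite{King}, so there is nothing to compare against directly. Your argument is the standard one and is correct: the see-saw inequality gives the Hom-vanishing between stable objects of different phase, and the usual HN-filtration into semistables (automatic since $\nzero$ has finite length for $Q$ Dynkin) refines to the desired filtration once you know that each $\hua{P}(\phi)$ has a unique simple, which is exactly what discreteness buys. One small point worth making explicit for completeness: the simples of $\hua{P}(\phi)$ are precisely the stable objects of phase $\phi$ (if $T\in\hua{P}(\phi)$ is simple and $0\neq L\subsetneq T$ has $\mu_Z(L)=\phi$, the maximal semistable subobject of $L$ lies in $\hua{P}(\phi)$ and contradicts simplicity), so ``unique stable indecomposable of phase $\phi$'' really does mean ``unique simple in $\hua{P}(\phi)$'' and Jordan--H\"older applies as you claim.
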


We say that a directed path in $\dpath(Q)$ is \emph{induced} if
the corresponding HN-stratum
is induced by some discrete stability function on $\nzero$.
Notice that, a totally stable stability function on $\nzero$
induces a directed path $p_s$ in $\dpath(Q)$ such that
there is an edge $M$ in $p_s$ for any $M\in\Ind\nzero$.
By \eqref{eq:diam max}, we know that $p_s$ is the longest path in $\dpath(Q)$.
Thus, in the language of exchange graphs,
Reineke's conjecture translates to, that
there exists a longest path in $\dpath(Q)$ which is induced.

It is natural to make a very strong generalization of Reineke's conjecture,
that any path in $\dpath(Q)$ is induced.
However, this is not true, even for some longest path shown below.

\begin{counterexample}\label{cex}
Let $Q$ be the following quiver of $D_4$-type
\[\xymatrix@R=.7pc{
    & 2 \ar[dl]\\ 1 & 3 \ar[l]\\ & 4 \ar[ul]
}\]
Then the AR-quiver of $\nzero$ is
\[
 \xymatrix@R=1pc@C=1pc{
    & P_2 \ar[dr] && M_2 \ar[dr] && I_2\\
    P_1 \ar[ur]\ar[r]\ar[dr] & P_3 \ar[r] &
        M_1 \ar[ur]\ar[r]\ar[dr]& M_3 \ar[r] &
            I_1 \ar[ur]\ar[r]\ar[dr] & I_3\\
    & P_4 \ar[ur] && M_4 \ar[ur] && I_4\\
}\]
We claim that the following longest path
\begin{gather}\label{eq:counter}
    p=I_2 \cdot I_3 \cdot I_4 \cdot I_1 \cdot
        M_ 3\cdot M_4 \cdot M_2 \cdot M_1 \cdot
            P_2 \cdot P_3 \cdot P_4 \cdot P_1
\end{gather}
is not induced.
\begin{figure}[ht]\centering
\begin{tikzpicture}[scale=1,rotate=23]
\draw (0,0) node[below] {$0$};
\draw[dotted,->,>=latex] (157:6) -- (-23:4) node[right]{$x$};
\draw[dotted] (-5,4) -- (4.2,.1);

\path ( 0,8 ) coordinate (I1);
\draw[thick,->,>=latex] (0,0) -- (I1) node[above]{$Z(I_1)$};
\path ( -5,4 ) coordinate (I2);
\draw[thick,->,>=latex] (0,0) -- (I2) node[left]{$Z(I_2)$};
\path ( -1.35,2.05 ) coordinate (I3);
\draw[thick,->,>=latex] (0,0) -- (I3) node[left]{$_{Z(I_3)}$};
\path ( -1.8,5.4 ) coordinate (I4);
\draw[thick,->,>=latex] (0,0) -- (I4) node[left]{$^{Z(I_4)}$};
\path ( 5,4 ) coordinate (M2);
\draw[thick,->,>=latex] (0,0) -- (M2) node[right]{$Z(M_2)$};
\path ( 1.35,5.95 ) coordinate (M3);
\draw[thick,->,>=latex] (0,0) -- (M3) node[right]{$^{Z(M_3)}$};
\path ( 1.8,2.6 ) coordinate (M4);
\draw[thick,->,>=latex] (0,0) -- (M4) node[above right]{$_{Z(M_4)}$};
\path ( 2.7,3.9 ) coordinate (V);

\draw[dotted] (I2) -- (I1);
\draw[dotted] (I3) -- (I1);
\draw[dotted] (I4) -- (I1);
\draw[dotted] (M2) -- (I1);
\draw[dotted] (M3) -- (I1);
\draw[dotted] (M4) -- (I1);

\draw[dotted] (M3) -- (V) node[right] {$Z_V$};
\draw[dotted] (M4) -- (V);

\draw[thick,->,>=latex,cyan,dashed] (I2) -- (M3);
\draw[thick,->,>=latex,cyan,dashed] (I2) -- (V);
\draw[thick,->,>=latex,cyan,dashed] (I2) -- (M4);
\end{tikzpicture}
\caption{The central charges}\label{fig:parallelogram}
\end{figure}
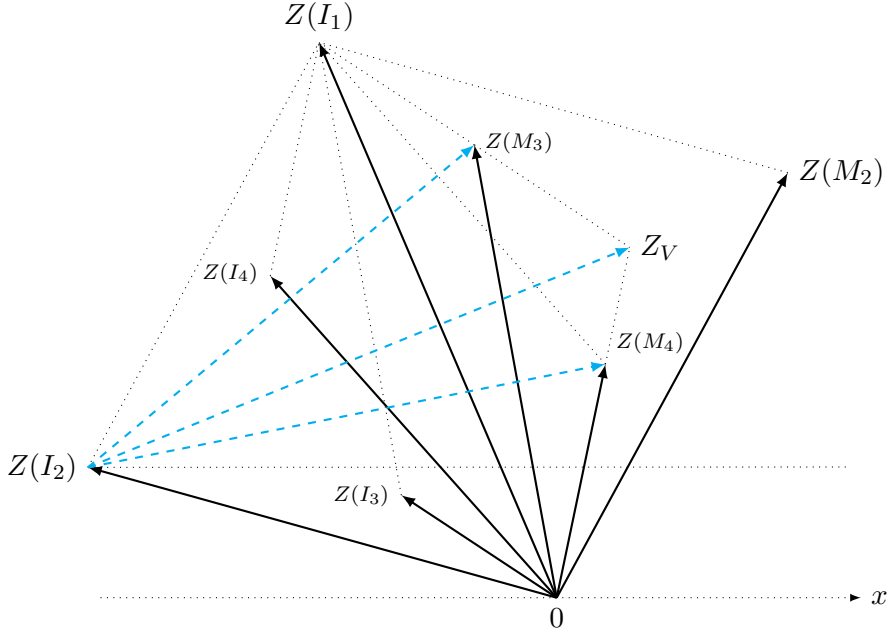
Suppose not, that $p$ is induced by some stability function $Z$.
The phase function $\mu_Z$ is decreasing on the edges in $p$
from left to right in \eqref{eq:counter}.
Then $Z(I_3),Z(I_4)$ are in the triangle with vertices $Z(I_2),Z(I_1)$ and origin $0$;
$Z(M_3),Z(M_4)$ are in the triangle with vertices $Z(I_1),Z(M_2)$ and $0$,
as shown in Figure~\ref{fig:parallelogram}.
Let $Z_V$ be the intersection of the line passing through points $Z(I_1),Z(M_3)$
and the line passing through points $Z(M_4),0$.
Noticing that
\[\mu_Z(P_3),\mu_Z(P_4)\in[0,\mu_Z(I_2)),\]
we have
\[
    \begin{array}{rll}
    \mu_Z(P_3) \pi&=\arg(Z(M_4)-Z(I_2))\\
              &<\arg(Z(M_4)-Z_V)\\
              &<\arg(Z(M_3)-Z(I_2))\\
              &=\mu_Z(P_4) \pi,
    \end{array}
\]
which is a contradiction.
\end{counterexample}

This suggests another generalization of Reineke's conjecture as follows.
We say two directed paths in $\dpath(Q)$ are \emph{weakly equivalent} if
the unordered sets of their edges coincide.

\begin{conjecture}
There is an induced path in each weak equivalence class in $\dpath(Q)$.
\end{conjecture}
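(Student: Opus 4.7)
The plan is to prove the conjecture by producing, for each weak equivalence class represented by a path $p$ with edge-label set $S = \{T_1, \ldots, T_l\} \subset \Ind\nzero$, a discrete stability function $Z$ on $\nzero$ whose set of stable indecomposables is precisely $S$. By Proposition~\ref{pp:King} any such $Z$ induces a directed path in $[p]$. I would proceed by induction on the co-length $d := \#\Ind\nzero - l$. The base case $d = 0$ corresponds to longest paths, where $S = \Ind\nzero$, and is exactly the content of Proposition~\ref{pp:R} (Reineke's conjecture).

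For the inductive step, given a weak equivalence class with $0 < d$, I would first show that $S$ can be enlarged by one indecomposable. Concretely, among the $M \in \Ind\nzero \setminus S$, I would locate one whose HN-filtration with respect to the HN-stratum $\oset{T_l, \ldots, T_1}$ has length exactly two, so that $M$ fits as an extension $0 \to A \to M \to B \to 0$ with $A \in \<T_{i+1}\>$ and $B \in \<T_i\>$ for some consecutive pair $T_{i+1}, T_i$. Using Lemma~\ref{lem:tp1} and Lemma~\ref{lem:tp2}, such an $M$ can be inserted into the path between $T_{i+1}$ and $T_i$, producing a strictly longer path $p^+ \in \dpath(Q)$ with edge-label set $S \cup \{M\}$ and co-length $d-1$. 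The inductive hypothesis yields a discrete stability function $Z^+$ whose stable set is exactly $S \cup \{M\}$.

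I would then construct $Z$ from $Z^+$ by a one-parameter deformation $Z_t$ that continuously moves the phase of $M$ towards the phase of $A$ (a subobject of $M$), keeping the phases of all $T_j$ fixed in a generic configuration. At the wall $\mu_{Z_t}(M) = \mu_{Z_t}(A)$, the inclusion $A \hookrightarrow M$ becomes phase-preserving and $M$ ceases to be stable; choosing the deformation to cross this wall genuinely (and not merely touch it) destabilizes $M$ to a strictly unstable object while each $T_j$ retains stability with pairwise distinct phases. The limit $Z$ is then a discrete stability function with stable set $S$, completing the induction.

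The main obstacle is Step~3: controlling the deformation so that it neither destabilizes a $T_j$ nor newly stabilizes some $N \in \Ind\nzero \setminus (S \cup \{M\})$. Counterexample~\ref{cex} shows that for a fixed ordering of $S$, the phase constraints imposed by AR-sequences (via the Hom-vanishing of Lemma~\ref{lem:lem} and the monotonicity of the position function in Definition/Lemma~\ref{def:pf}) can be genuinely obstructive; hence one must be prepared to use the freedom to pass to a weakly equivalent reordering of $S$ during the deformation. Equivalently, one needs to show that the semi-algebraic stratum of central charges yielding stable set $S$ is non-empty, and I expect this to reduce to a combinatorial analysis of the AR-quiver that is tractable case-by-case for each Dynkin type but nontrivial to formulate uniformly.
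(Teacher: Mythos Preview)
The paper does not prove this statement: it is presented as an open \emph{conjecture} (a proposed generalization of Reineke's conjecture), motivated by Counterexample~\ref{cex}. There is therefore no proof in the paper to compare your attempt against, and any complete argument you produce would be new.

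Your proposal is a reasonable strategy but, as you yourself note, it is not a proof. Let me point out one concrete issue beyond what you flag. In Step~3 you propose a one-parameter deformation $Z_t$ that ``moves the phase of $M$ \ldots\ keeping the phases of all $T_j$ fixed''. This is impossible as stated: a stability function is a group homomorphism $Z\colon\K(\nzero)\to\kong{C}$, so it is determined by its values on the simples $S_1,\dots,S_n$. Since $M$ has an HN-filtration with factors in the $\langle T_j\rangle$, one has $[M]=\sum_i [A_{j_i}]$ in $\K(\nzero)$ and hence $Z(M)$ is \emph{determined} by the $Z(T_j)$. You cannot vary $Z(M)$ while holding all $Z(T_j)$ fixed; any deformation that changes the phase of $M$ necessarily moves some $Z(T_j)$ as well, and then you must control the stability of every other indecomposable simultaneously. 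This is precisely the global wall-and-chamber problem, and Counterexample~\ref{cex} shows that the chamber with prescribed stable set $S$ (in a prescribed order) can be empty.

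Your Step~2 also needs more care: the existence of an $M\in\Ind\nzero\setminus S$ with HN-length exactly two is not automatic. What is true is that, by Proposition~\ref{pp:45}, any non-longest path can be lengthened by one via a pentagon move, which inserts the middle object $T_j=\phi_{S_i}^{-1}(S_j)$ of Lemma~\ref{lem:simple}; this is the correct way to phrase the enlargement. But the heart of the matter remains Step~3: showing that the (possibly empty) locus of central charges with stable set exactly $S$ is non-empty is the content of the conjecture, and your inductive scheme does not yet supply the missing geometric or combinatorial input.
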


Note that all longest paths in $\dpath(Q)$ form a weak equivalent class $E$.
Thus Reineke's conjecture can be stated as:
there is a path in the weak equivalence class $E$ which is induced.

\section{Quantum dilogarithms via exchange graph}\label{sec:QD via EG}
In this section, we define a quantum dilogarithm function on paths in exchange graphs,
which provides another proof of Reineke's identities (see Theorem~\ref{thm:R})
and the existence of DT-type invariants for a Dynkin quiver.

\subsection{DT-type invariant for a Dynkin quiver}\label{sec:DT.R}
Let $q^{1/2}$ be an indeterminate and
$\kong{A}_Q$ be the quantum affine space
\begin{gather}\label{eq:qas}
    \kong{Q}(q^{1/2})\big\{ y^\alpha \mid \alpha\in\kong{N}^{Q_0},
    y^\alpha y^\beta =q^{\frac{1}{2}(\<\beta,\alpha\>-\<\alpha,\beta\>)} y^{\alpha+\beta}
    \big\},
\end{gather}
where $\<-,-\>$ is the Euler form associated to $Q$
(see \S~\ref{sec:quiver.D}).
Denote $y^{\dim M}$ by $y^M$ for $M\in\nzero$.
Notice that $\kong{A}_Q$ can be also written as
\begin{gather*}
    \kong{Q}(q^{1/2})\< y^{S} \mid S\in\Sim\nzero \>  \big{/}
    (y^{S_i} y^{S_j} - q^{\lambda_Q(i,j)} y^{S_j} y^{S_i} ),
\end{gather*}
where $\lambda_Q(i,j)=\<S_j,S_i\>-\<S_i,S_j\>$.
Let $\widehat{\kong{A}}_Q$ be the completion of $\kong{A}_Q$
with respect to the ideal generated by $y^S, S\in\Sim\nzero$.

The DT-type invariant $\DT(Q)$ of the quiver $Q$ can be calculated
by the product \eqref{eq:E} as follows.
\begin{theorem}[Reineke \cite{R}, cf. \cite{K6}]\label{thm:R}
For any HN-stratum $\varsigma=\oset{K_l ,..., K_1}$ in $\HN(Q)$,
the product
\begin{gather}\label{eq:E}
    \DT(Q;\varsigma)\:=\kong{E}(y^{K_l})\cdot\kong{E}(y^{K_{l-1}})\cdots\kong{E}(y^{K_1})
\end{gather}
in $\kong{A}_Q$ is actually independent of $\varsigma$,
where $\kong{E}(X)$ is the quantum dilogarithm defined as the formal series
\[
    \kong{E}(X)=\sum_{j=0}^{\infty} \frac{q^{j^2/2}X^j}{
    \prod_{k=0}^{j-1}  (q^j-q^k)  }.
\]
\end{theorem}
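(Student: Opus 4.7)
The plan is to recast Reineke's theorem as a path-independence statement in the exchange graph and then reduce everything to the two local identities provided by the squares and pentagons of Proposition~\ref{pp:45}.

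First I would use Theorem~\ref{thm:HN} to identify $\HN(Q)$ with $\dpath(Q) = \dpath(\nzero, \nzero[1])$. Under this bijection, the product $\DT(Q;\varsigma) = \prod_{j=1}^l \kong{E}(y^{K_j})$ becomes the product $\Phi(p) = \kong{E}(y^{T_l}) \cdots \kong{E}(y^{T_1})$ read off the edge labels of the corresponding directed path $p = T_l \cdot T_{l-1} \cdots T_1$ in $\EG(Q;\nzero,\nzero[1])$. The theorem is then equivalent to the claim that $\Phi(p)$ depends only on the endpoints $\nzero$ and $\nzero[1]$ and not on the particular maximal chain $p$ connecting them. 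Note $\Phi(p)$ is a well-defined element of $\widehat{\kong{A}}_Q$ because every edge label $T_j$ is an indecomposable of $\nzero$ (the successive torsion-free simples of Lemma~\ref{lem:tp1}).

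Second, I would show that any two directed paths $p, p' \in \dpath(Q)$ are connected by a finite sequence of \emph{square moves} and \emph{pentagon moves}, i.e.\ moves which replace one side of a square \eqref{eq:45} by its other side or one side of a pentagon by its other side. The argument mimics the proof of Proposition~\ref{pp:45}: by Proposition~\ref{pp:lim} the interval $\EG(Q;\nzero,\nzero[1])$ is finite, so we can induct on $\#\{\h \mid \exists\,\h'\in p\cup p' \text{ with } \h' \leq \h \leq \nzero[1]\}$. Choose a common head $\h$ where $p$ and $p'$ first diverge, and let $S_i, S_j$ be the two outgoing simple-tilt labels. By Lemma~\ref{lem:simple} these two edges complete to a square or pentagon that lies entirely in $\EG(Q;\nzero,\nzero[1])$ (the opposite vertex still satisfies $\leq\nzero[1]$, using the second part of Lemma~\ref{lem:well known}); replacing the relevant segment of $p$ by the alternative branch reduces the induction quantity and concludes the step.

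Third, I would verify that $\Phi$ is invariant under each elementary move. For a square, $\Hom^\bullet(S_i,S_j) = \Hom^\bullet(S_j,S_i) = 0$ forces the Euler form contributions $\<S_i,S_j\> = \<S_j,S_i\> = 0$, hence $\lambda_Q(i,j) = 0$ and $y^{S_i}, y^{S_j}$ commute in $\kong{A}_Q$; so $\kong{E}(y^{S_i})\kong{E}(y^{S_j}) = \kong{E}(y^{S_j})\kong{E}(y^{S_i})$ by inspection of the series. For a pentagon, the intermediate object $T_j = \phi_{S_i}^{-1}(S_j)$ sits in the defining triangle of the spherical twist, so $\dim T_j = \dim S_i + \dim S_j$ and a direct computation in $\kong{A}_Q$ (using $\lambda_Q(i,j) = \pm 1$, depending on arrow orientation) yields $y^{T_j} = q^{\pm 1/2} y^{S_i} y^{S_j}$. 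The required equality between the two sides of the pentagon is then exactly the Faddeev--Kashaev Pentagon Identity $\kong{E}(y^{S_j})\kong{E}(y^{S_i}) = \kong{E}(y^{S_i})\kong{E}(y^{T_j})\kong{E}(y^{S_j})$ (with suitable ordering conventions).

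The main obstacle I anticipate is the directed-path variant of Proposition~\ref{pp:45}: the original statement is about cycles and is proved by picking a source, but in the directed setting I must argue that after a local replacement the new path is still a \emph{directed} path from $\nzero$ to $\nzero[1]$, and that every initial divergence can always be completed to a square or pentagon living inside the interval. The remaining subtlety is the careful sign/power bookkeeping in the commutation $y^{T_j} = q^c y^{S_i} y^{S_j}$, so that the Pentagon Identity applies in the exact algebraic form appearing in $\kong{A}_Q$; this is a finite case analysis based on the sign of $\lambda_Q(i,j)$ and the orientation of the arrow between the corresponding vertices in $Q$.
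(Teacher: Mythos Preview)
Your approach is essentially the paper's own combinatorial proof (Theorem~\ref{thm:DT}): identify HN-strata with directed paths via Theorem~\ref{thm:HN}, reduce to the squares and pentagons of Proposition~\ref{pp:45}, and check that the square gives commuting dilogarithms while the pentagon gives exactly the Pentagon Identity \eqref{eq:ref-5}. The one difference worth noting is that the paper sidesteps your anticipated obstacle entirely: rather than staying within \emph{directed} paths and arguing that each local replacement preserves directedness, it extends the quantum-dilogarithm function to \emph{all} paths in $\EG(Q;\nzero,\nzero[1])$ by assigning $\kong{E}(y^{K_j})^{\varepsilon_j}$ to an edge traversed with sign $\varepsilon_j$, and then invokes Proposition~\ref{pp:45} directly to say that $\pi_1$ of the interval is generated by squares and pentagons --- so path-independence for arbitrary paths follows immediately from the two local identities, and Reineke's theorem drops out as the special case of directed paths. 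This buys a cleaner argument (no need for the inductive ``first divergence'' bookkeeping or worrying about whether replacements stay directed) and yields the slightly stronger statement \eqref{eq:GDT} for free.
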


In this subsection,
we will review Reineke's approach to Theorem~\ref{thm:R},
via identities in the Hall algebra (closely following \cite{K6}).

Let $\k_0$ be a finite field with $q_0=|\k_0|$
and consider the abelian category $\nzero(\k_0)=\mod \k_0 Q$.
Recall that the completed (non twisted, opposite) Hall algebra $\widehat{\hall}_{\k_0}(Q)$
consists of formal series with rational coefficients
\[\sum_{ [M]\in \nzero } a_m [M] ,\]
where the sum is over all isomorphism classes $[M]$ in $\nzero$.
The product in $\widehat{\hall}_{\k_0}(Q)$ is given by the formula
\[ [L][M]=\sum c_{LM}^K({q_0})[K] \]
where $c_{LM}^N(q_0)$ is the number of submodules $L'$ of $K$
such that $L'\cong L$ and $K/L'\cong M$ in $\nzero(\k_0)$.
Then the HN-property of an HN-stratum $\varsigma=\oset{K_l,...,K_1}$
translates into the identity (in Hall algebra) as
\begin{gather}\label{eq:MR}
    \sum_{ [M]\in \nzero } [M]= \prod_{j=1}^{l} \sum_{ [M]\in \<K_j\> } [M]
\end{gather}
Reineke showed that there is an algebra homomorphism (known as the \emph{integration})
\begin{gather*}
    \int\colon\widehat{\hall}_{\k_0}(Q) \to \widehat{\kong{A}}_{Q,q=q_0}\\
    [M] \mapsto q^{\<\dim M,\dim M\>}\frac{y^{M}}{|\Aut M|}.
\end{gather*}
By integrating \eqref{eq:MR},
a term $\sum_{ [M]\in \<K_j\> } [M]$ in the RHS gives $\kong{E}(y^{K_j})$
and hence the RHS gives $\DT(Q;\varsigma)$.
Notice that the LHS of \eqref{eq:MR} is clearly independent of $\varsigma$,
thus its integration gives the DT-type invariant $\DT(Q)$ for a Dynkin quiver $Q$.

\begin{example}\cite[Corollary~2.7]{K6}
By the proof of Lemma~\ref{lem:path},
we know that $\overrightarrow{\prod}_{S\in\Sim\h} S$ is
a shortest path in $\dpath(Q)$,
where the product is with respect to the increasing order of the position function
(if two objects have the same position function, then their order does not matter).
Moreover, by direct checking, we know that
$\overleftarrow{\prod}_{M\in\Ind\h} M$ is
a longest path in $\dpath(Q)$ consisting of APR tiltings,
where the product is with respect to the decreasing order of the position function.
Then these two paths (or the corresponding HN-strata) give the equality
\begin{gather}\label{eq:ls}
    \prod_{M\in\Ind\h}^{\longleftarrow} \kong{E}(y^{M})
    =\prod_{S\in\Sim\h}^{\longrightarrow} \kong{E}(y^{S}).
\end{gather}
\end{example}

\subsection{Generalized DT-type invariants for a Dynkin quiver}\label{sec:QDI}
We will give a combinatorial proof of Theorem~\ref{thm:R},
which provides a slightly stronger statement.

Let $p=\prod_{j=1}^l K_j^{\varepsilon_j}:\h \to \h'$ be
a path (not necessarily directed) in $\EG(Q;\nzero,\nzero[1])$,
where $K_j$ are edges in $\EG(Q)$ and
the sign $\varepsilon_j=\pm1$ indicates the direction of $K_j$ in $p$.
Define the quantum dilogarithm function of $p$ to be
\[
    \DT(Q;p)=\prod_{j=1}^{l} \kong{E}(y^{K_j})^{\varepsilon_i}.
\]
Since we identify HN-strata with directed paths in Theorem~\ref{thm:HN},
thus Theorem~\ref{thm:R} can be rephrased as:
the quantum dilogarithm of a directed path connecting $\nzero$ and $\nzero[1]$
is independent of the choice of the path.
It is natural to ask if the path-independence holds for
more general paths (not necessary directed).
The answer is yes within the subgraph $\EG(Q;\nzero,\nzero[1])$.

\begin{theorem}\label{thm:DT}
If $p$ is a path in $\EG(Q;\nzero,\nzero[1])$, then
$\DT(Q;p)$ only depends on the head $\h$ and tail $\h'$ of $p$.
\end{theorem}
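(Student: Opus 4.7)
The plan is to reformulate the claim as a statement about closed loops and then reduce it, via the combinatorial skeleton already set up, to a finite list of \emph{elementary} quantum dilogarithm identities attached to squares and pentagons.

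First I would observe that the assignment $p\mapsto\DT(Q;p)$ respects concatenation (in the order of traversal) and satisfies $\DT(Q;p^{-1})=\DT(Q;p)^{-1}$, so the theorem is equivalent to the assertion that $\DT(Q;c)=1$ for every closed loop $c$ in the full subgraph $\EG(Q;\nzero,\nzero[1])$. Note that this subgraph coincides with the interval $\EG_{3}(Q,\nzero[-1])$ appearing in Proposition~\ref{pp:45} (up to the obvious shift), so the statement of that proposition applies: every loop in $\EG(Q;\nzero,\nzero[1])$ can be written, in $\pi_1$, as a product of squares and pentagons of the form displayed in \eqref{eq:45} (and their inverses). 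Concretely, I would run the induction in the proof of Proposition~\ref{pp:45}: at each sink-like vertex $\h$ one replaces the two outgoing edges $S_i,S_j$ either by the remaining three edges of the corresponding pentagon or by the remaining two edges of the corresponding square, which strictly shrinks the set $D(c)$ and stays inside $\EG(Q;\nzero,\nzero[1])$. Thus it suffices to check that the quantum dilogarithm product around a single square or pentagon of type \eqref{eq:45} equals $1$.

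Next I would verify these two elementary identities by direct calculation in the quantum affine space $\kong{A}_Q$. For the \emph{square} case $\Hom^1(S_i,S_j)=\Hom^1(S_j,S_i)=0$ with $i\neq j$, the Euler form pairing gives $\<S_i,S_j\>=\<S_j,S_i\>=0$, hence by \eqref{eq:qas} the variables $y^{S_i}$ and $y^{S_j}$ commute, so that $\kong{E}(y^{S_i})\kong{E}(y^{S_j})=\kong{E}(y^{S_j})\kong{E}(y^{S_i})$; this matches the two paths $\h\to\h_i\to\h_{ij}$ and $\h\to\h_j\to\h_{ij}$. For the \emph{pentagon} case, say with arrow $j\to i$ in $Q$, one computes $\<S_i,S_j\>=0$, $\<S_j,S_i\>=-1$, whence
\[
    y^{S_j}y^{S_i}=q\,y^{S_i}y^{S_j},\qquad y^{T_j}=q^{-1/2}y^{S_j}y^{S_i},
\]
since $\dim T_j=\dim S_i+\dim S_j$. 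The classical Faddeev--Kashaev pentagon identity for the quantum dilogarithm then gives
\[
    \kong{E}(y^{S_j})\kong{E}(y^{S_i})=\kong{E}(y^{S_i})\,\kong{E}(y^{T_j})\,\kong{E}(y^{S_j}),
\]
which is precisely the equality of the two paths $\h\to\h_j\to\h_{ij}$ and $\h\to\h_i\to\h_*\to\h_{ij}$ in \eqref{eq:45}. Combining these with the decomposition of the loop obtained from Proposition~\ref{pp:45} yields $\DT(Q;c)=1$.

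The main point to be careful about is bookkeeping: the paths in the statement are not assumed directed, so one must track the signs $\varepsilon_j$ and verify that reversing an edge in the loop is compensated by the corresponding inverse factor $\kong{E}(y^{K_j})^{-1}$; once this is set up, the square and pentagon identities above cover all cases uniformly. The only conceptual step is matching the combinatorial square/pentagon in \eqref{eq:45} with the algebraic identities, which is the main obstacle but is handled by the Euler form computation above. No additional input beyond Proposition~\ref{pp:45} and the pentagon identity for $\kong{E}$ is needed, and this makes transparent the remark in the introduction that the required quantum dilogarithm identities are nothing but compositions of classical pentagon identities.
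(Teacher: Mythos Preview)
Your proposal is correct and follows essentially the same route as the paper's own proof: reduce via Proposition~\ref{pp:45} to the elementary square and pentagon cycles in \eqref{eq:45}, then verify these by the commutativity relation and the classical Pentagon Identity in $\kong{A}_Q$. The only cosmetic differences are that you phrase the reduction explicitly as ``$\DT(Q;c)=1$ for loops'' and identify $\EG(Q;\nzero,\nzero[1])$ with $\EG_3(Q,\nzero[-1])$, and that you write the dilogarithm product in forward traversal order whereas the paper uses the reverse (HN-stratum) order; the paper also records explicitly that $S_i,S_j\in\nzero$ with $\Hom(S_i,S_j)=\Hom(S_j,S_i)=0$, which you are using implicitly when invoking \eqref{eq:euler form}.
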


\begin{proof}
By Proposition~\ref{pp:45},
$\pi_1(\EG(Q;\nzero,\nzero[1]))$ is generated
by the squares and pentagons as in \eqref{eq:45}.
Thus it is sufficient to check these two cases for the path-independence.

First, in either case, we have
we have $\Hom(S_i,S_j)=\Hom(S_j,S_i)=0$ and $S_i,S_j\in\nzero$.
For the square, we have
\[
    \Hom^1(S_i,S_j)=\Hom^1(S_j,S_i)=0
\]
and hence $\<\dim S_i, \dim S_j\>=\<\dim S_j, \dim S_i\>=0$
by \eqref{eq:euler form}, which implies
\[y^{S_i}\cdot y^{S_j}=y^{S_j} \cdot y^{S_i},\quad
    \kong{E}(y^{S_i})\cdot\kong{E}(y^{S_j})
    =\kong{E}(y^{S_j}) \cdot \kong{E}(y^{S_i})
\]
as required.
For the pentagon, we have
\[
    \Hom^1(S_i,S_j)=0, \quad \dim\Hom^1(S_j,S_i)=1
\]
and hence $\<\dim S_i$, $\dim S_j\>=0$ and $\<\dim S_j, \dim S_i\>=-1$
by \eqref{eq:euler form}.
By the relations of the quantum affine space we have
\begin{gather*}
    y^{S_i}\cdot y^{S_j}
    =q^{-1} \cdot y^{S_j} \cdot y^{S_i},\\
    y^{T_j}=q^{\frac{1}{2}} \cdot y^{S_i} \cdot y^{S_j},
\end{gather*}
noticing that $\dim T_j=\dim S_i+\dim S_j$ since $T_j$ is the extension of $S_i$ on top of $S_j$.
By the Pentagon Identity (see, e.g. \cite[Theorem~1.2]{K6}) we have
\begin{gather}\label{eq:ref-5}
    \kong{E}(y^{S_i})\cdot\kong{E}(y^{S_j})=
    \kong{E}(y^{S_j})\cdot\kong{E}(y^{T_j})\cdot\kong{E}(y^{S_i})
\end{gather}
as required.
\end{proof}

Therefore for any two hearts $\h_1,\h_2$ in $\EG(Q;\h,\h[1])$,
we have a \emph{generalized DT-type invariant}
\begin{gather}\label{eq:GDT}
    \DT(Q;\h_1,\h_2):=\DT(Q;p)
\end{gather}
where $p$ is any path connecting $\h$ and $\h'$.
In particular, we have
\begin{gather}\label{eq:DT}
    \DT(Q)=\DT(Q;\nzero,\nzero[1]).
\end{gather}

\subsection{Wall crossing formula for APR-tilting}
Let $i$ be a sink in $Q$ and $\Sim\h_{Q}=\{S_j\}_{j=1}^n$.
Then the APR-tilt $\h_{Q'}=\tilt{(\nzero)}{\sharp}{S_i}$ is also a standard heart in $D(Q)$,
where $Q'$ is obtained from $Q$ by reversing the arrows incident at $i$.
By \cite[Proposition~5.2]{Q}, we have $\Sim\h_{Q'}=\{T_j\}_{j=1}^n$,
where $T_i=S_i[1]$,
\[T_j=\Cone\left(S_j\to S_i[1]\otimes\Ext^1(S_j, S_i)^* \right) [-1]\]
for $j\neq i$.
Let $\dim'$ and $\<-,-\>'$ be the corresponding
dimension vector and the Euler form, respectively, associated to $Q'$.
Consider the quantum affine space $\kong{A}_{Q'}$
\begin{gather*}
    \kong{Q}(q^{1/2})\< z^T \mid T\in\Sim\h_{Q'} \>  \big{/}
    (z^{T_i} z^{T_j} =q^{\lambda_{Q'}(i,j)} z^{T_j} z^{T_i} )
\end{gather*}
where $z^T=z^{\dim' T}$ and $\lambda_{Q'}(i,j)=\<T_j,T_i\>'-\<T_i,T_j\>'$.
By Theorem~\ref{thm:DT},
we can also define DT-type invariants
$\DT(Q';\h_1,\h_2)$ in $\kong{A}_{Q'}$
for any $\h_1,\h_2\in\EG(Q;\h_{Q'},\h_{Q'}[1])$.

Notice that the labels of edges in $\EG(Q;\h_{Q'},\nzero[1])$ are in
\[\Ind(\nzero\cap\h_{Q'})=\Ind\nzero-\{S_i\}=\Ind\h_{Q'}-\{S_i[1]\}.\]
It is straightforward to check that the following conditions are equivalent
\numbers
\item for any hearts $\h_1,\h_2\in\EG(Q;\h_{Q'},\nzero[1])$,
    \[\DT(Q;\h_1,\h_2)=\DT(Q';\h_1,\h_2),\]
\item
    we have $z^{T_i}=(y^{S_i})^{-1}$ and $z^{M}=y^{M}$ for any
    $M\in\Ind(\nzero\cap\h_{Q'})$.
\item
    we have $z^{T_i}=(y^{S_i})^{-1}$ and $z^{T_j}=y^{T_j}$ for $j\neq i$.
\item
    we have $z^{T_i}=(y^{S_i})^{-1}$ and $z^{S_j}=y^{S_j}$ for $j\neq i$.
\ends
Further, if the conditions above hold,
the \emph{wall crossing formula}
\begin{gather}\label{eq:WC}
    \DT(Q)\cdot\kong{E}( y^{S_i} )^{-1}
    =\kong{E}( y^{-S_i} )^{-1}\cdot\DT(Q')
\end{gather}
comes for free because both sides are equal to
$\DT(Q;\h_{Q'},\nzero[1])$.

\begin{remark}
One can rephrase Keller's green mutation formula
(to calculate DT-invariants for quivers with potential)
as quantum dilogarithm functions on the corresponding exchange graphs in the same way,
cf. say \cite{E}.
In fact, exchange graphs are simplified version of
Keller's cluster groupoids in \cite{K6}.
\end{remark}

\appendix
\section{Connectedness of $\D(Q)$}\label{app}
We give another proof of the connectedness of the exchange graph for $\D(Q)$,
which was a result of Keller-Vossieck \cite{KV}.

\begin{definition}\label{def:L}
We say an indecomposable object $L$
in a subcategory $\hua{C} \subset \hua{D}(Q)$
is \emph{leftmost} if there is no path
from any other indecomposable in $\hua{C}$ to $L$,
or equivalently that no predecessor of $L$ is in $\hua{C}$.
In particular, a leftmost object in a heart is simple.
If in a simple forward tilting,
the simple object is leftmost, we call it a \emph{L-tilting}.
Similarly, an indecomposable object $R$ is \emph{rightmost}
if there is no path from $R$ to any other indecomposable in $\hua{C}$.
\end{definition}

\begin{lemma}
\label{lem:last}
Let $S$ be leftmost in $\h$
and $\h^\sharp=\tilt{\h}{\sharp}{S}$.
We have
\numbers
\item
$\big( \Ind \h \setminus \{S\}\big) \subset \h^\sharp$.
\item
Follow the notation of \cite[Proposition~3.3]{Q}.
If $m>1$, then $H_m^\hua{F}=0$.
\item
For any $M \in \Ind{\hua{D}(Q)}$,
$\Wid{\h^\sharp}M \leq \Wid{\h}M$.
\ends
\end{lemma}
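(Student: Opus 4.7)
I prove the three parts in order, (1) feeding into (2) and (3). For (1), in Dynkin type every morphism between indecomposables of $\h$ is a sum of compositions of irreducible morphisms, so ``$S$ leftmost'' is equivalent to $\Hom_\h(X,S)=0$ for every $X\in\Ind\h$ with $X\not\cong S$. Since $\Ext^1_\h(S,S)=0$, the torsion-free class $\hua{F}=\<S\>$ consists only of copies of $S$, so the torsion part $\hua{T}=\{T\in\h:\Hom_\h(T,S)=0\}$ contains every indecomposable of $\h$ other than $S$. In particular $\Ind\h\setminus\{S\}\subset\hua{T}\subset\h^\sharp$.

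For (2), the key structural input is that a leftmost simple in $\h$ is a simple projective, which yields (a) $\Ext^j_\h(S,-)=0$ for all $j\geq 1$, and (b) every torsion sequence $0\to H^\hua{T}\to H\to H^\hua{F}\to 0$ in $\h$ splits, with $H^\hua{F}\cong S^r$. Assuming $r>0$ for $H=H_m$ and $m>1$, in the defining triangle $M_{m-1}\to M\to H_m[k_m]\to M_{m-1}[1]$ the restriction of the connecting morphism $\delta$ to the $S^r[k_m]$ summand is an element of $\Hom_{\hua{D}}(S,M_{m-1}[1-k_m])$. Now $M_{m-1}[1-k_m]$ has $\h$-cohomology concentrated in degrees $k_i+1-k_m\geq 2$, so (a) combined with d\'evissage along the canonical $\h$-filtration yields $\Hom_{\hua{D}}(S,M_{m-1}[1-k_m])=0$. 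Hence $\delta|_{S^r[k_m]}=0$, $S^r[k_m]$ splits off as a direct summand of $M$, and indecomposability forces $M\cong S[k_m]$ with $m=1$, a contradiction.

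For (3), the HRS formula yields, for each integer $k$, a short exact sequence in $\h^\sharp$
\begin{gather*}
    0 \to (\Ho{k}^\h(M))^\hua{T} \to \Ho{k}^{\h^\sharp}(M) \to (\Ho{k+1}^\h(M))^\hua{F}[1] \to 0.
\end{gather*}
When $m=1$, the indecomposable $H_1$ is either isomorphic to $S$ (so $H_1^\hua{T}=0$ and the cohomology concentrates in $\h^\sharp$-degree $k_1-1$) or lies in $\hua{T}$ by (1) (so $H_1^\hua{F}=0$ and the cohomology concentrates in $\h^\sharp$-degree $k_1$); either way the width is zero. When $m>1$, part (2) forces $H_m\in\hua{T}$, so the minimal nonzero $\h^\sharp$-degree equals $k_m$, while the maximal equals $k_1$ if $H_1^\hua{T}\neq 0$ and $k_1-1$ otherwise. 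In every case $\Wid{\h^\sharp}(M)\leq k_1-k_m=\Wid{\h}(M)$.

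The main obstacle is the Hom-vanishing in (2); this is the step that converts a hypothetical nonzero $H_m^\hua{F}$ into a genuine direct summand of $M$ and where the leftmost hypothesis is really used beyond (1), via the upgrade ``simple projective $\Rightarrow$ higher Ext of $S$ in $\h$ vanishes''. Everything else, including the case analysis in (3), is formal once (1) and (2) are in hand.
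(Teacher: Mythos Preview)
Your arguments for (1) and (3) match the paper's; the HRS short exact sequence you write down is exactly what the paper calls ``filtration~(3.2) of \cite{Q}''.

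For (2) you take a genuinely different route. The paper argues purely in the AR-quiver: if $H_m^{\hua F}\neq 0$ then the surjection $M\to H_m^{\hua F}[k_m]$ makes $M[-k_m]$ a predecessor of $S$; on the other hand any indecomposable summand $L$ of $H_1$ gives a nonzero map $L[k_1]\to M$, so $L$ is a predecessor of $M[-k_m]$ and hence of $S$, contradicting leftmostness (or, when $L=S$, acyclicity of $\kong{Z}Q$). Your approach instead tries to split $S^r[k_m]$ off $M$ homologically.

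There is, however, a real gap in your (2). You invoke ``(a) $\Ext^j_\h(S,-)=0$ for all $j\geq 1$'' to obtain $\Hom_{\hua D}(S,M_{m-1}[1-k_m])=0$ via d\'evissage, but the d\'evissage requires $\Hom_{\hua D}(S,H_i[\ell])=0$ for $\ell\ge 2$, and this is \emph{not} the same as $\Ext^\ell_\h(S,H_i)=0$ unless the realization functor $\hua D^b(\h)\to\hua D(Q)$ is an equivalence. That equivalence does hold for every heart in $\hua D(Q)$, but the standard way to see it is via connectedness of $\EG(Q)$---and this lemma is a step in the appendix proof of that connectedness, so appealing to it here is circular. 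The vanishing you actually need, $\Hom_{\hua D}(S,X[j])=0$ for $X\in\Ind\h$ and $j\ge 1$, is true and can be shown directly: by Serre duality it equals $\Hom_{\hua D}(X,\tau S[1-j])^*$, and a nonzero such map makes $X$ a predecessor of $\tau S[1-j]$, hence of $S$, contradicting leftmostness. (The $j=1$ case of this is also the missing justification for your claim ``leftmost $\Rightarrow$ projective''.) So your strategy can be salvaged, but the repair is precisely the paper's AR-quiver argument; once you supply it, the extra homological packaging (splitting, d\'evissage) is not really buying anything over the direct predecessor contradiction.
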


\begin{proof}
Since $S$ is a leftmost object,
then $\Ind\hua{F}=\{S\}$.
For any indecomposable in $\h$ other than $S$,
we have $\Hom(M, S)=0$ which implies
$\big( \Ind \h \setminus \{S\}\big)
    \subset \hua{T} \subset \h^\sharp$.

For $2^\circ$, suppose $H_m^\hua{F}=S^j\neq0$,
then $M[-k_m]$ is the predecessor of $S$.
Consider an indecomposable summand $L$ of $H_1$.
If $L=S$, then $S[k_1]$ is the predecessor of $M$.
Since $k_1>k_m$, $S$ is the predecessor of $S[k_1-k_m]$, hence the predecessor of $M[-k_m]$.
Then $M[-k_m]$ and $S$ are predecessors to each other which is a contradiction.
If $L\neq S$, then $L \in \hua{T}$.
$L$ is the predecessor of $M[-k_1]$, hence the predecessor of $M[-k_m]$.
Then $L$ is the predecessor of $S$ which is also a contradiction.

For $3^\circ$,
if $\Wid{\h}M>0$, then $m>1$.
By $2^\circ$, $H_m^\hua{F}=0$.
Then by the filtration~(3.2) of \cite{Q},
$\Wid{\h^\sharp}M \leq k_1-k_m= \Wid{\h}M$.
If $\Wid{\h}M=0$, or equivalently $m=1$,
then by the filtration~(3.2) of \cite{Q} again,
$\Wid{\h^\sharp}M = 0 = \Wid{\h}M$.
\end{proof}

By the same argument in the proof of Lemma~\ref{lem:last} $2^\circ$,
we know that
an object $S$ is a leftmost object in a heart $\h$ in $\hua{D}(Q)$,
if and only if it is a leftmost object in the corresponding t-structure $\hua{P}$.

\begin{corollary}
\label{cor:L-tilting}
For a L-tilting with respect to a leftmost object $S$,
we have $\Ind\hua{P}^\sharp =\Ind\hua{P} -\{S\} $.
\end{corollary}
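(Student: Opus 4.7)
The plan is to establish the equality by proving three things in order: $\hua{P}^\sharp \subset \hua{P}$, $S \notin \hua{P}^\sharp$, and every indecomposable $M \in \hua{P}$ with $M \neq S$ lies in $\hua{P}^\sharp$. The first inclusion is formal: since $\hua{T} \subset \h \subset \hua{P}$ and $\hua{F}[1] \subset \h[1] \subset \hua{P}$, we have $\h^\sharp = \<\hua{T}, \hua{F}[1]\> \subset \hua{P}$, whence $\hua{P}^\sharp \subset \hua{P}$ by closure under $[1]$ and extensions. For $S \notin \hua{P}^\sharp$, the leftmost hypothesis gives $\hua{F} = \<S\>$, so $S[1] \in \h^\sharp$; writing $S = S[1][-1]$ is then the canonical $\h^\sharp$-filtration of $S$, placing its only nonzero homology in degree $-1$.

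The substantive step is showing $M \in \hua{P}^\sharp$ whenever $M \in \Ind\hua{P}$ and $M \neq S$. I would take the canonical $\h$-filtration of $M$ with pieces $H_i[k_i]$ and indices $k_1 > \cdots > k_m \geq 0$, and decompose each $H_i = H_i^\hua{T} \oplus H_i^\hua{F}$ via the torsion pair $\<\hua{F}, \hua{T}\>$. Since $\hua{P}^\sharp$ is closed under extensions, it suffices to show each graded piece $H_i[k_i]$ lies in $\hua{P}^\sharp$. The torsion summand $H_i^\hua{T}[k_i]$ lies in $\h^\sharp[k_i] \subset \hua{P}^\sharp$ as $k_i \geq 0$, and the torsion-free summand, rewritten as $H_i^\hua{F}[k_i] = (H_i^\hua{F}[1])[k_i - 1]$, lies in $\h^\sharp[k_i - 1] \subset \hua{P}^\sharp$ as long as $k_i \geq 1$.

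The main obstacle is ruling out an $\hua{F}$-summand at degree zero, which by strict decrease of the $k_i$ can only occur at the bottom index $i = m$ with $k_m = 0$. For $m > 1$ this is precisely what Lemma~\ref{lem:last}~$2^\circ$ forbids, since it forces $H_m^\hua{F} = 0$ in that situation. For $m = 1$ and $k_1 = 0$ the object $M$ itself lies in $\h$; indecomposability then forces $M \in \hua{T}$ or $M \in \hua{F} = \<S\>$, and the second possibility means $M = S$, which is excluded by hypothesis. Combining the inclusions completes the proof.
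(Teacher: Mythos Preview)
Your argument is correct and follows essentially the same route as the paper: both use the refined $\h^\sharp$-filtration obtained by splitting each $H_i$ into its $\hua{T}$- and $\hua{F}$-parts, invoke Lemma~\ref{lem:last}~$2^\circ$ to kill $H_m^{\hua{F}}$ when $m>1$, and fall back on Lemma~\ref{lem:last}~$1^\circ$ (indecomposables of $\h$ other than $S$ lie in $\hua{T}$) for the $m=1$ case. One small point: the torsion pair gives a short exact sequence $0\to H_i^{\hua{T}}\to H_i\to H_i^{\hua{F}}\to 0$ rather than a direct sum in general, but since you only use that $\hua{P}^\sharp$ is closed under extensions this does not affect the argument.
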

\begin{proof}
Consider the filtration~(3.1) of \cite{Q},
we have $M \notin \hua{P}$ if and only if $k_m < 0$.
If so,
since $H_m^\hua{T}$ or $H_m^\hua{F}$ is not $0$ in the filtration~(3.1) of \cite{Q},
then $M \notin \hua{P}^\sharp$.
Thus $\Ind\hua{P}^\sharp  \subset \Ind\hua{P}$.
On the other hand,
$M \in \Ind\hua{P}-\Ind\hua{P}^\sharp$
if and only if $H_m^\hua{F} \neq 0$ and $k_m=0$.
In which case, $m=1$ by Lemma~\ref{lem:last},
and hence $M=S$.
\end{proof}

\begin{lemma}
\label{lem:de}
For any object $M\in\Ind\hua{D}(Q)$,
if $\Wid{\h}M>0$,
then applying any sequence of L-tiltings to $\h$
must reduce the width of $M$ to zero after finitely many steps.
\end{lemma}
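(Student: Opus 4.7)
The plan is to argue by contradiction: suppose there is an infinite sequence of L-tiltings $\h=\h_0\to\h_1\to\cdots$ along which $\Wid{\h_i}M$ never strictly drops. By Lemma~\ref{lem:last}~$3^\circ$ the sequence of widths is non-increasing, hence stabilizes at a common value $w\geq 1$, and I may reindex so that $\Wid{\h_i}M=w$ for every $i$.

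The first step is to pin down how the canonical filtration $H_1[k_1]\oplus\cdots\oplus H_m[k_m]$ of $M$ (with $k_1>\cdots>k_m$, so $w=k_1-k_m$) transforms under an L-tilting at the leftmost simple $S\in\h_i$. Splitting each piece via the torsion pair with $\hua{F}=\langle S\rangle$ as $H_j=H_j^{\hua{T}}\oplus S^{a_j}$, one reads off that in $\h_{i+1}$ the torsion parts $H_j^{\hua{T}}$ contribute at degree $k_j$, while the torsion-free parts $S^{a_j}\in\hua{F}\subset\h_{i+1}[-1]$ contribute at degree $k_j-1$. By Lemma~\ref{lem:last}~$2^\circ$ we have $H_m^{\hua{F}}=0$, pinning the bottom degree $k_m$, so constancy of $w$ is equivalent to $H_1^{\hua{T}}\neq 0$; equivalently, $H_1$ contains at least one indecomposable summand distinct from $S$.

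Introduce the finite set $\mathcal{T}_i:=\Ind H_1^{(i)}$. From the filtration transformation, an L-tilting at $S\notin\mathcal{T}_i$ leaves $\mathcal{T}_i$ unchanged, whereas an L-tilting at $S\in\mathcal{T}_i$ yields $\mathcal{T}_{i+1}=\mathcal{T}_i\setminus\{S\}$; the case $|\mathcal{T}_i|=1$ would force $H_1=S^{a_1}$ and drop the width, contradicting our standing assumption. Therefore $|\mathcal{T}_i|$ is a positive non-increasing integer stabilizing at some $|\mathcal{T}_\infty|\geq 1$, and past the stabilization index no L-tilting ever picks an element of $\mathcal{T}_\infty$, so the fixed finite set $\mathcal{T}_\infty$ persists inside $\Ind\hua{P}_i$ indefinitely.

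To derive a contradiction, combine Corollary~\ref{cor:L-tilting} (each L-tilting strictly shrinks $\Ind\hua{P}_i$ by exactly one indecomposable) with the Dynkin AR-combinatorics: by Lemma~\ref{lem:ps}, the predecessors of any $L\in\mathcal{T}_\infty$ inside $\Ind\hua{P}_0$ lie in a bounded interval of $\kong{Z}Q$, so their union $\mathcal{P}$ over $L\in\mathcal{T}_\infty$ is a finite set. Moreover the position function $\pf(\h_i)$ strictly increases by at least $h_Q$ under every L-tilting, so the simples of $\h_i$ cannot stay close to $\mathcal{T}_\infty$ forever; eventually every leftmost simple of $\h_i$ must coincide with a member of $\mathcal{T}_\infty\cup\mathcal{P}$, and after exhausting the finite set $\mathcal{P}$ the adversary is forced to L-tilt at an element of $\mathcal{T}_\infty$, contradicting the stabilization of $|\mathcal{T}_i|$. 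The main obstacle is precisely this confinement claim --- that in the stabilized regime the leftmost simples of $\h_i$ cannot escape the finite set $\mathcal{T}_\infty\cup\mathcal{P}$ --- which demands careful tracking of how the heart's evolving simple set relates to the fixed top-homology summands $\mathcal{T}_\infty$, leveraging both Dynkin finiteness and the strict growth of $\pf(\h_i)$ under L-tilting.
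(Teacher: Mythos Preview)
Your overall strategy coincides with the paper's: argue by contradiction, stabilize the width, observe that constancy forces $H_1^{\hua{T}}\neq 0$, track the set of indecomposable summands of the top homology, stabilize that set to $\mathcal{T}_\infty$, and then show that one is eventually forced to L-tilt at some $T_j\in\mathcal{T}_\infty$. Up to this point you match the paper exactly.

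The gap is in your final confinement step. You claim the leftmost simple $S_i$ must eventually lie in $\mathcal{T}_\infty\cup\mathcal{P}$, where $\mathcal{P}$ is the set of \emph{predecessors} of elements of $\mathcal{T}_\infty$ in $\Ind\hua{P}_0$. But leftmostness of $S_i$ in $\h_i$ with $T_j\in\h_i$ only tells you that $T_j$ has no path to $S_i$, i.e.\ that $S_i$ is a \emph{non-successor} of $T_j$; it does \emph{not} force $S_i$ to be a predecessor of $T_j$. In $\kong{Z}Q$ two objects can be incomparable (neither a predecessor of the other), so your set $\mathcal{P}$ is too small and the confinement claim as stated is false. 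Your appeal to the growth of $\pf(\h_i)$ does not repair this and is in fact a red herring: $\pf(\h_i)$ is a sum over all simples, and its growth says nothing about the position of the particular leftmost simple chosen at each step.

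The paper's closing argument is both simpler and direct. Since $S_i$ is leftmost in $\h_i$ and each $T_j\in\mathcal{T}_\infty$ lies in $\h_i$, every $S_i$ is a non-successor of $T_j$; by Lemma~\ref{lem:ps} the non-successors of $T_j$ form the bounded-above set $(-\infty,\tau\Ps(T_j)]$, and its intersection with $\Ind\hua{P}_0$ is finite (Dynkin: $\Ind\hua{P}_0$ is bounded below in $\pf$). Since by Corollary~\ref{cor:L-tilting} the $S_i$ are pairwise distinct, the sequence must terminate --- contradiction. Replacing ``predecessors'' by ``non-successors'' in your $\mathcal{P}$ and dropping the $\pf(\h_i)$ discussion gives exactly this, and closes the gap you yourself flagged.
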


\begin{proof}
Suppose not,
let $\Wid{\h}M >0$ is the minimal width of $M$ under any L-tilting.
We have $m>1$ in filtration~(3.1) of \cite{Q}.
Then $H_m^\hua{F}=0$ by Lemma~\ref{lem:last}.
In the filtration~(3.2) of \cite{Q},
if $H_1^{\hua{T}}$ vanishes, then $\Wid{\h^\sharp}M \leq (k_1-1)-k_m< \Wid{\h}M$.
But $\Wid{\h}M $ is minimal, thus $H_1^{\hua{T}}\neq0$ after any L-tilting.

Consider the size of $H_1^{\hua{T}}$.
Let $H_1^{\hua{T}} = \oplus_{j=1}^l T_j^{s_j}$,
where $T_j$ are different indecomposables in $\hua{T}$ and $l$ is a positive integer.
By the filtration~(3.2) of \cite{Q} we know $H_1^{\hua{T}}$ will not change
if we do L-tilting that is not with respect to any $T_j$.
And if we do L-tilting with respect to some $T_j$,
then $H_1^{\hua{T}}$ will lose the summand $T_j$.
Since $H_1^{\hua{T}}$ cannot vanish,
we can assume after many L-tilting, $l$ reaches the minimum,
and we can not do L-tilting that is with respect to any $T_i$.

On the other hand, for any object $M\in \Ind\hua{D}(Q)$,
while $M[m]$ is the successor of some $T_j$ when $m$ is large enough,
it can not be leftmost in any heart that contains $T_j$.
Besides we can only do L-tilting with respect to any object once.
Thus, we will eventually have to tilt $T_j$,
which will reduce $l$ and it is a contradiction.
\end{proof}

Now we have a proposition about how one can do L-tilting.

\begin{proposition}\label{pp:L}
Applying any sequence of L-tiltings to any heart,
will make it standard after finitely many steps.
\end{proposition}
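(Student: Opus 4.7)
The plan is to introduce a non-negative integer invariant that measures how far a heart is from being standard and show it strictly decreases under L-tilting as long as it is positive. By Proposition~\ref{pp:standard} (the equivalence $3^\circ \Leftrightarrow 4^\circ$), a heart $\h$ is standard exactly when $\Wid{\h}M = 0$ for every $M \in \Ind\D(Q)$. Because width is shift-invariant and every indecomposable of $\D(Q)$ is a shift of one in $\nzero$ by \eqref{eq:ar}, this is equivalent to $\Wid{\h}M = 0$ for all $M$ in the finite set $\Ind\nzero$. I therefore propose the invariant
\[
    W(\h) \;:=\; \sum_{M \in \Ind\nzero} \Wid{\h}M \;\in\; \kong{Z}_{\geq 0},
\]
which vanishes precisely when $\h$ is standard.

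First, I would observe using Lemma~\ref{lem:last}$3^\circ$ that $W$ is monotone under L-tilting: if $\h \to \h^\sharp$ is an L-tilt, then $\Wid{\h^\sharp}M \leq \Wid{\h}M$ for every indecomposable $M$, and summing over $M \in \Ind\nzero$ gives $W(\h^\sharp) \leq W(\h)$. Hence, along any sequence of L-tiltings $\h = \h_0, \h_1, \h_2, \ldots$, the sequence $W(\h_i)$ is non-increasing.

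Next, suppose $W(\h_i) > 0$. Then there exists some $M_0 \in \Ind\nzero$ with $\Wid{\h_i}M_0 > 0$. By Lemma~\ref{lem:de}, after finitely many further L-tiltings, the width of $M_0$ strictly drops; during this process, the widths of all other indecomposables stay weakly below their initial values by Lemma~\ref{lem:last}$3^\circ$, so $W$ itself strictly decreases. Since $W$ takes values in $\kong{Z}_{\geq 0}$, it cannot decrease infinitely often; therefore after finitely many L-tiltings we must have $W = 0$, i.e.\ the heart becomes standard.

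The main (modest) obstacle is simply to check that $W(\h)$ is well-defined and finite, which rests on two already-recorded facts: widths are shift-invariant, and $\Ind\nzero$ is finite for Dynkin $Q$. Once this is in place, the substantive content has been packaged into Lemma~\ref{lem:last} (monotonicity) and Lemma~\ref{lem:de} (eventual strict decrease on any indecomposable of positive width), and the proposition follows from a short well-ordering argument as outlined.
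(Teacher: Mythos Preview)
Your proposal is correct and follows essentially the same approach as the paper: both arguments combine Lemma~\ref{lem:last}$3^\circ$ (widths are monotone under L-tilting), Lemma~\ref{lem:de} (any positive width eventually drops), and the finiteness of indecomposables up to shift, to conclude via Proposition~\ref{pp:standard}$4^\circ$. Your explicit invariant $W(\h) = \sum_{M \in \Ind\nzero} \Wid{\h}M$ simply makes the paper's informal counting argument quantitative.
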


\begin{proof}
By Lemma~\ref{lem:de},
the width of any particular indecomposable will become zero
after finitely steps in the sequence.
But, up to shift, there are only finitely many
indecomposables in $\Ind\hua{D}(Q)$.
Thus, after finitely steps, we must reach a heart
with respect to which all indecomposables have width zero
and which is therefore standard, by Proposition~\ref{pp:standard}.
\end{proof}

Now we can prove the connectedness:

\begin{theorem}[Keller-Vossieck \cite{KV}]
\label{thm:conn}
$\EG(Q)$ is connected.
\end{theorem}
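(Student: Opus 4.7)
The plan is to combine Proposition~\ref{pp:L} with a short argument about how standard hearts are glued together by BGP reflections. I would reduce the connectedness of $\EG(Q)$ to showing that the standard hearts $\{\h_{Q'}\}$, as $Q'$ ranges over all orientations of the underlying Dynkin diagram, lie in one connected component. Given any $\h\in\EG(Q)$, Proposition~\ref{pp:L} produces a sequence of L-tiltings, i.e.\ a directed path in $\EG(Q)$, from $\h$ to some standard heart $\h_{Q_\h}$; so it suffices to connect any two standard hearts.

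For this second step I would observe that whenever $i$ is a sink of $Q'$ there are no arrows out of $i$, so $\Ext^1(S_i,S_j)=0$ for every $j$ and $S_i$ is simple projective in $\h_{Q'}$, hence a leftmost simple. The simple forward tilt $\tilt{\h_{Q'}}{\sharp}{S_i}$ is then the standard heart $\h_{Q''}$, where $Q''$ is obtained from $Q'$ by reversing every arrow incident to $i$; this is the APR/BGP reflection already used in Sections~\ref{sec:sc.CY.Q} and~\ref{sec:QD via EG}. Hence $\h_{Q'}$ and $\h_{Q''}$ are adjacent in $\EG(Q)$ whenever $Q''$ is obtained from $Q'$ by a sink reversal (and, by traversing the edge in the opposite direction, a source reversal gives the same adjacency).

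It then remains to invoke the classical Bernstein-Gelfand-Ponomarev fact that on a tree, and therefore on every simply-laced Dynkin diagram, any two orientations are related by a sequence of reflections at sinks and sources. Composing such a sequence of APR-tilts with the L-tilting path furnished by Proposition~\ref{pp:L} produces, for every $\h\in\EG(Q)$, a path joining $\h$ to the canonical heart $\nzero$.

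The real obstacle is Proposition~\ref{pp:L} itself, whose proof via Lemmas~\ref{lem:last} and~\ref{lem:de} is the genuine work, since termination of an arbitrary sequence of L-tiltings in a standard heart is not obvious and is the content of the width-reduction argument. Granted that input, the remainder of the proof of Theorem~\ref{thm:conn} is formal: identifying simple forward tilts at sinks with BGP reflections on standard hearts, and then citing transitivity of those reflections on orientations of a tree.
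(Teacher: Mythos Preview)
Your proposal follows exactly the paper's two-step strategy: invoke Proposition~\ref{pp:L} to L-tilt an arbitrary heart into a standard one, then connect any two standard hearts by APR-tilts (the paper simply cites \cite[p.~201]{ASS1} for this second step, which is your BGP argument).

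There is one small oversight worth flagging. Standard hearts are indexed by \emph{sections} of $\mathbb{Z}Q$ (Proposition~\ref{pp:standard}), not merely by orientations $Q'$ of the underlying diagram; for each orientation there are infinitely many sections, namely its $\tau$-translates. So the finite set $\{\h_{Q'}\}$ you describe does not exhaust the standard hearts, and Proposition~\ref{pp:L} may land you at, say, $\nzero[3]$ rather than at some $\h_{Q'}$. Your BGP transitivity argument shows that APR-tilts act transitively on orientations, but you still need to move within a $\tau$-orbit. This is easy: performing sink reflections at all vertices of $Q'$ in an admissible order carries a section to its $\tau^{-1}$-translate while returning to the same orientation, so iterating lets you reach any section. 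With that remark added your argument is complete and coincides with the paper's.
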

\begin{proof}
Since t-structure and heart are one-one correspondent,
any heart is connected to a standard heart by Proposition~\ref{pp:L}.
On the other hand,
using the equivalent definition $3^\circ$ in Proposition~\ref{pp:standard} for `standard',
the set of all standard hearts is connected by APR-tilting (cf.\cite[p~201]{ASS1}).
So the theorem follows.
\end{proof}


Mathematical Department, University of Bath, Bath, BA2 7AY, UK

E-mail address:  yu.qiu@bath.edu


\begin{thebibliography}{99}

\bibitem{ACV}
  M.~Alim, S.~Cecotti, C.~Cordova, S.~Espahbodi, A.~Rastogi and C.~Vafa,
  BPS Quivers and Spectra of Complete N=2 Quantum Field Theories,
  \href{http://arxiv.org/abs/1109.4941}{arXiv:1109.4941v1}.

\bibitem{ASS1}
  I.~Assem, D.~Simson and A.~Skowroski,
  Elements of the Representation Theory of Associative Algebras 1,
  \emph{Cambridge Uni. Press}, 2006.

\bibitem{Br-T}
  C.~Brav and H.~Thomas,
  Braid groups and Kleinian singularities,
  \emph{Math. Ann.}, 351(4)(2011), 1005-1017,
  (\href{http://arxiv.org/abs/0910.2521}{arXiv:0910.2521v3}).

\bibitem{B1}
  T.~Bridgeland,
  Stability conditions on triangulated categories,
  \emph{Ann. Math.} {166} (2007).
  (\href{http://arxiv.org/abs/math/0212237}{arXiv:math/0212237v3})

\bibitem{B2}
  T.~Bridgeland,
  Space of stability conditions,
  \href{http://arxiv.org/abs/math/0611510}{arXiv:math/0611510v1}.

\bibitem{BQS}
  T.~Bridgeland, Y.~Qiu, and T.~Sutherland.
  Stability conditions and the ${A}_2$ quiver.
  \href{http://arxiv.org/abs/1406.2566}{arXiv:1406.2566}.

\bibitem{C}
  L.~Conlon,
  Differentiable Manifolds,
  \emph{Birkh\"{a}user Boston}, 2nd ed., 2001.


\bibitem{F}
  W.~Fulton,
  Algebraic topology: A first course,
  \emph{Springer-Verlag} 1991.

\bibitem{E}
  M.~Engenhorst,
  Tilting and Refined Donaldson-Thomas Invariants,
  \href{http://arxiv.org/abs/1303.6014}{arXiv:1303.6014}.


\bibitem{G}
  V.~Ginzburg,
  Calabi-Yau algebras,
  \href{http://arxiv.org/abs/math/0612139}{arXiv:math/0612139v3}.

\bibitem{H}
  D.~Happel,
  On the derived category of a Finite-Dimensional algebra,
  \emph{Comment. Math. Helv}, 62(1987).

\bibitem{HRS}
  D.~Happel, I.~Reiten, and S.Smal\o,
  Tilting in abelian categories and quasitilted algebras,
  \emph{Mem. Amer. Math. Soc.} 120 (1996), no. 575, viii+ 88.

\bibitem{He}
  S.~Helgason,
  Differential Geometry, Lie Groups, and Symmetric Spaces,
  \emph{Academic Press}, 1978.




\bibitem{K6}
  B.~Keller,
  On cluster theory and quantum dilogarithm.
  \href{http://arxiv.org/abs/1102.4148}{arXiv:1102.4148v4}.


\bibitem{K10}
  B.~Keller,
  Cluster algebras and derived categories,
  \href{http://arxiv.org/abs/1202.4161}{arXiv:1202.4161v4}.

\bibitem{Kqm}
  B.~Keller,
  \href{http://www.math.jussieu.fr/~keller/quivermutation/}
  {Java program of quiver mutation}.

\bibitem{K4}
  B.~Keller and P.~Nicol\'{a}s
  Weight structures and simple dg modules for positive dg algebras.
  \emph{Int. Math. Res. Notices}, 2012.
  (\href{http://arxiv.org/abs/1009.5904}{arXiv:1009.5904v4})

\bibitem{KV}
  B.~Keller and D.~Vossieck,
  Aisles in derived categories,
  \emph{Bull. Soc. Math. Belg.} 40 (1988), 239-253.


\bibitem{KS}
  M.~Khovanov and P.~Seidel,
  Quivers, Floer cohomology and braid group actions,
  \emph{J. Amer. Math. Soc.} 15 (2002), 203--271.
  (\href{http://arxiv.org/abs/math/0006056}{arXiv:math/0006056v2})

\bibitem{King}
    A~King,
    Moduli of representations of finite-dimensional algebras, Quart. J. Math.
    \href{http://qjmath.oxfordjournals.org/content/45/4/515.citation}
    {\emph{Oxford Ser.} (2) 45 (1994), no. 180, 515-530}.

\bibitem{Q}
  A.~King and Y.~Qiu,
  Exchange graphs of acyclic Calabi-Yau categories,
  \href{http://arxiv.org/abs/1109.2924}{arXiv:1109.2924v2}.

\bibitem{Mu}
Y.~Mizuno,
  Classifying $\tau$-tilting modules over preprojective algebras
  of Dynkin type,
  \href{http://arxiv.org/pdf/1304.0667.pdf}{arxiv:1304.0667}.

\bibitem{N}
  K.~Nagao, Wall-crossing of the motivic Donaldson-Thomas invariants,
  \href{http://arxiv.org/abs/1103.2922v1}{arXiv:1103.2922v1}.

\bibitem{N1}
  K.~Nagao, Donaldson-Thomas theory and cluster algebras,
  \href{http://arxiv.org/abs/1002.4884}{arXiv:1002.4884v2}.

\bibitem{Q3}
  Y.~Qiu,
  C-sortable words as green mutation sequences,
  \href{http://arxiv.org/abs/1205.0034}{arXiv:1205.0034}.

\bibitem{Qiu-Woolf}
  Y.~Qiu and J.Woolf,
  Contracible stablilty mainfolds and faithful braid group actions,
  in preparation.

\bibitem{ST}
  P.~Seidel and R.~Thomas,
  Braid group actions on derived categories of coherent sheaves,
  \emph{Duke Math. J.}, 108(1):37-108, 2001.
  (\href{http://arxiv.org/abs/math/0001043}{arXiv:math/0001043v2})

\bibitem{R}
  M.~Reineke, The Harder-Narasimhan system in quantum groups and cohomology of quiver moduli, \emph{Invent. Math.} 152 (2003), no. 2, 349-368.
  (\href{http://arxiv.org/abs/math/0204059}{arXiv:math/0204059v1})

\bibitem{R2}
  M.~Reineke, Poisson automorphisms and quiver moduli,
  \emph{J. Inst. Math. Jussieu 9} (2010), no.3, 653-667.
  (\href{http://arxiv.org/abs/0804.3214}{arXiv:0804.3214v2})

\bibitem{S1}
  T.~Sutherland,
 The modular curve as the space of stability conditions of a CY3 algebra,
  \href{http://arxiv.org/abs/1111.4184}{arXiv:1111.4184v1}.

\bibitem{T1}
  R.~Thomas,
  Stability conditions and the braid group,
  \emph{Comm. Anal. Geom.}, 14(1), 135-161, 2006.
  (\href{http://arxiv.org/abs/math/0212214v5}{arXiv:math/0212214v5})

\bibitem{W}
  J.~Woolf,
  Stability conditions, torsion theories and tilting,
  \emph{J. Lon. Math. Soc.}, Vol 82, Issue 3, 663-682, 2010
  (\href{http://arxiv.org/abs/0909.0552}{arXiv:0909.0552v2}).

\end{thebibliography}
\end{document}